\numberwithin{equation}{section}
\numberwithin{equation}{section}
\newtheorem{theorem}{Theorem}[section]
\newtheorem{lemma}[theorem]{Lemma}
\newtheorem{lem}[theorem]{Lemma}
\newtheorem{proposition}[theorem]{Proposition}
\newtheorem{prop}[theorem]{Proposition}
\newtheorem{corollary}[theorem]{Corollary}
\theoremstyle{definition}
\newtheorem{definition}[theorem]{Definition}
\newtheorem{dfn}[theorem]{Definition}
\theoremstyle{remark}
\newtheorem{example}[theorem]{Example}
\theoremstyle{remark}
\newtheorem{rmk}[theorem]{Remark}
\newtheorem{remark}[theorem]{Remark}
\newtheorem{question}[theorem]{Question}
\newcommand\bp{\begin{proof}}
\newcommand\ep{\end{proof}}
\newcommand\Tr{\operatorname{Tr}}
\newcommand{\hotimes}{\otimes_h}
\newcommand{\cL}{\mathcal{L}}
\newcommand{\cH}{\mathcal{H}}
\newcommand{\cB}{\mathcal{B}}
\newcommand{\minotimes}{\otimes_{{\rm min}}}
\newcommand{\cN}{\mathcal{N}}
\newcommand{\cR}{\mathcal{R}}
\newcommand{\boldv}{{\bf v}}
\newcommand{\boldw}{{\bf w}}
\newcommand{\cQ}{\mathcal{Q}}
\newcommand{\Cliq}{\textsf{Cliq}}
\newcommand{\Link}{\textsf{Link}}
\newcommand{\Comm}{\textsf{Comm}}
\newcommand{\Diag}{{\rm Diag}}
\newcommand{\rmC}{\textrm{C}}
\newcommand{\rmR}{\textrm{R}}
\newcommand{\Dom}{{\rm Dom}}
\newcommand{\RWS}{\mathbb{R}_{>0}^{(W,S)}}
\newcommand{\CWS}{\mathbb{C}^{(W,S)}}
\newcommand{\OneWS}{ \{-1, 1 \}^{(W,S)}}
\begin{document}

 \title[Graph product Khintchine inequalities and Hecke C$^\ast$-algebras]{Graph product Khintchine inequalities and Hecke C$^\ast$-algebras:  Haagerup inequalities, (non)simplicity, nuclearity and exactness}

\author{Martijn Caspers, Mario Klisse}

\address{TU Delft, EWI/DIAM,
	P.O.Box 5031,
	2600 GA Delft,
	The Netherlands}

\email{m.p.t.caspers@tudelft.nl}

\email{m.klisse@tudelft.nl}

\author{Nadia S. Larsen}

\address{Department of Mathematics, University of Oslo, P.O. Box 1053, Blindern, NO-0316}

\email{nadiasl@math.uio.no}

\date{18 December 2019. {\it MSC2010}: 46L07, 46L09, 47A63.  MC is supported by the NWO Vidi grant `Non-commutative harmonic analysis and rigidity of operator algebras', VI.Vidi.192.018.  MK is supported by the NWO project `The structure of Hecke-von Neumann algebras', 613.009.125. NL acknowledges support from the Trond Mohn Foundation through the project ``Pure Mathematics in Norway''.}

\maketitle
\begin{abstract}
Graph products of groups were introduced by Green in her thesis \cite{Green}. They have an operator algebraic counterpart introduced and explored in \cite{CaspersFima}. In this paper we prove Khintchine type inequalities for general C$^{\ast}$-algebraic graph products which generalize results by Ricard and Xu \cite{RicardXu} on free products of C$^{\ast}$-algebras. We apply these inequalities in the context of (right-angled) Hecke C$^{\ast}$-algebras, which are deformations of the group algebra of Coxeter groups (see \cite{Da}). For these we deduce a Haagerup inequality which generalizes results from \cite{HaagerupExample}. We further use this to study the simplicity and trace uniqueness of (right-angled) Hecke C$^{\ast}$-algebras. Lastly we characterize exactness and nuclearity of general Hecke C$^{\ast}$-algebras.
\end{abstract}

%%%%%%%%%%%%%%%%%%%%%%%%%%%%%%%%%%%%%%%%%%%%%%%%%%%%%

\section*{Introduction}

A graph product of groups, first introduced in \cite{Green}, is a group theoretic construction that generalizes both free products and Cartesian products of groups. It associates to a simplicial graph with discrete groups as vertices a new group by taking the free product of the vertex groups, with added relations depending on the graph. The construction preserves many important group theoretical properties (see \cite{HaagerupProperty}, \cite{Alternatives}, \cite{CaspersIDAQP}, \cite{Ordering}, \cite{Sofic}, \cite{RapidDecay}, \cite{Green}, \cite{Linear}, \cite{Algorithm}) and covers for instance right-angled Coxeter groups and right-angled Artin groups.

In \cite{CaspersFima} Fima and the first-named author explored the operator algebraic counterpart of graph products and developed the theory of reduced and universal graph products of C$^{\ast}$-algebras as well as graph products of von Neumann algebras and quantum groups. The construction generalizes operator algebraic free products and - just as for groups - preserves properties like exactness for reduced graph products of C$^{\ast}$-algebras and the Haagerup property for von Neumann algebras. Important examples of graph products are right-angled Hecke algebras (see below), special cases of mixed $q$-Gaussian algebras  \cite{BozejkoSpeicher} as well as several group C$^\ast$-algebras that have been studied in the literature, see in particular \cite{Dykema} and Section \ref{Sect=FreeAbelian}. In a different direction, C$^{\ast}$-algebras associated to graph products of groups with distinguished positive cones were studied in \cite{CrispLaca}.

In this paper we prove a Khintchine inequality for general C$^{\ast}$-algebraic graph products. These are inequalities which estimate the operator norm of a reduced operator of a given length with the norm of certain Haagerup tensor products of column and row Hilbert spaces. In the case of free groups and words of length 1 these inequalities go back to Haagerup's fundamental paper \cite{HaagerupExample}.
 In the case of general free products and arbitrary length a Khintchine type inequality has been proven by Ricard and Xu in \cite[Section 2]{RicardXu}. In the current paper we obtain a Khintchine inequality for general graph products. We do this by introducing an intertwining technique between graph products and free products.
% Even though the analogue of their inequality does not hold in the general graph product setting, we obtain a similar statement by using maps that intertwine graph products with free products.

\begin{theorem} \label{Theorem 1}
Let $\Gamma$ be a finite simplicial graph. Consider a graph product  $\left(A,\varphi\right)=\ast_{v,\Gamma}\left(A_{v},\varphi_{v}\right)$ of unital C$^{\ast}$-algebras with \emph{GNS}-faithful states $\varphi_{v}$. Denote by $\chi_{d}$ the word length projection of length $d$. Then for every $d\in\mathbb{N}_{\geq 1}$ there exists some operator space $X_{d}$ and maps \begin{eqnarray} \nonumber j_{d}\text{: }\chi_{d}\left(A\right)\rightarrow X_{d}\text{, }\qquad\pi_{d}\text{: }\text{Dom}\left(\pi_{d}\right)\subseteq X_{d}\rightarrow\chi_{d}\left(A\right), \end{eqnarray} with $\text{Dom}\left(\pi_{d}\right)=j_{d}\left(\chi_{d}\left(A\right)\right)$ such that the following statements hold:

\begin{enumerate}[label=(\roman*)]
\item $X_d$ is a direct sum of Haagerup tensor products of column and row Hilbert spaces and is defined in \eqref{Eqn=Xd2};
\item $\pi_{d}\circ j_{d}$ is the identity on $\chi_{d}\left(A\right)$;
\item\label{Item=BoundPi} $\left\Vert \pi_{d}\text{: }\text{Dom}\left(\pi_{d}\right)\rightarrow A\right\Vert _{cb}\leq C d$ for some (explicit) constant $C$, depending only on the graph $\Gamma$.
\end{enumerate}
\end{theorem}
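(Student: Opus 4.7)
The plan is to bootstrap from the Khintchine inequality for free products of Ricard and Xu~\cite{RicardXu} by establishing an intertwining between length-$d$ reduced operators in the graph product and their free-product counterparts. As a first step I would decompose $\chi_d(A) = \bigoplus_{[\boldw]} \chi_{[\boldw]}(A)$, where the direct sum runs over equivalence classes of reduced words $\boldw = (v_1,\ldots,v_d)$ modulo the commutations induced by edges of $\Gamma$, and $\chi_{[\boldw]}(A)$ is the closed subspace spanned by reduced operators of that word type. This decomposition suggests that the space $X_d$ of~\eqref{Eqn=Xd2} should be a direct sum indexed by such $[\boldw]$ together with a cut $0\leq k\leq d$, whose summands are Haagerup tensors of the form $\HH_{v_1}^c \hotimes \cdots \hotimes \HH_{v_k}^c \hotimes \HH_{v_{k+1}}^r \hotimes \cdots \hotimes \HH_{v_d}^r$, where $\HH_v$ is the GNS Hilbert space of $(A_v,\varphi_v)$ with its column or row operator space structure.

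The second step is the intertwining itself. For a fixed equivalence class $[\boldw]$, I would construct a completely bounded identification between $\chi_{[\boldw]}(A)\subseteq A$ and the corresponding subspace of the free product $\tilde A = \ast_v (A_v,\varphi_v)$. The Fock-space model of the graph product from~\cite{CaspersFima} realizes $\chi_{[\boldw]}(A)$ via creation/annihilation operators which, up to combinatorial bookkeeping over the equivalence class, coincide with the analogous object in the free product. Granted this identification, $j_d$ is defined on a reduced operator by recording its column/row expansions for each cut $k$, while $\pi_d$ multiplies the entries back and sums over the index set of $X_d$; the identity $\pi_d\circ j_d = \id_{\chi_d(A)}$ then reduces to a telescoping calculation in the Fock model.

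For the completely bounded bound in~\ref{Item=BoundPi} the strategy is to write $\pi_d = \sum_{k=0}^d \pi_d^{(k)}$ according to the cut position, to observe that each $\pi_d^{(k)}$ factors as a map from the Haagerup tensor product of column and row spaces into the length-$d$ component of $\tilde A$ and from there into $A$, and to bound the first arrow by the Ricard--Xu inequality and the second by the intertwining. Each piece should then be completely contractive up to a constant that depends only on $\Gamma$, and summing over the $d+1$ cut positions yields the linear factor $Cd$.

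The main obstacle, in my view, is the intertwining lemma itself: one must show that different reduced-word representatives of a single element of $\chi_{[\boldw]}(A)$ give rise to column and row structures in the Haagerup tensor that agree up to a completely bounded isomorphism whose norm is controlled purely by the clique structure of $\Gamma$. Any dependence of this constant on $d$, or on the size of $[\boldw]$, would destroy the linear bound, so the technical heart is a careful analysis of how commuting vertex subalgebras act on the Fock space of the graph product. Once this compatibility is in hand, the Ricard--Xu estimate combined with standard Haagerup tensor calculus should finish the proof.
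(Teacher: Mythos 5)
Your high-level strategy (decompose length-$d$ operators, intertwine with the free product, invoke Ricard--Xu) is the one the paper follows, but the concrete decomposition you propose has a genuine gap. In a graph product the identity $a_1\cdots a_d=\sum(\text{creation part})(\text{annihilation part})$ with a single cut $0\le k\le d$ is false: expanding each $a_i$ against $P_{v_i}+P_{v_i}^\perp$ produces, besides creation terms $P_{v}aP_{v}^\perp$ and annihilation terms $P_{v}^\perp aP_{v}$, \emph{diagonal} terms $P_{v}aP_{v}$, and in a graph product a whole block of these survives in the middle whenever the corresponding letters form a clique (this is the content of Lemma \ref{Lem=QaQDec} and Proposition \ref{Prop=TProjectionDec}). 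Your proposed summands $\HH_{v_1}^c\hotimes\cdots\hotimes\HH_{v_k}^c\hotimes\HH_{v_{k+1}}^r\hotimes\cdots\hotimes\HH_{v_d}^r$ contain no diagonal factor, so $\pi_d\circ j_d=\id$ cannot hold. The space $X_d$ of \eqref{Eqn=Xd2} accordingly has summands $L_k\hotimes A_{\Gamma_0}\hotimes K_{d-k-l}$ with a middle diagonal space $A_{\Gamma_0}$ attached to a clique $\Gamma_0$, and the summands are indexed not by word classes but by tuples $(l,k,\Gamma_0,\Gamma_1,\Gamma_2)$ with $(\Gamma_1,\Gamma_2)\in\Comm(\Gamma_0)$ recording the maximal cliques commuting with $\Gamma_0$ at the two junctions; this extra combinatorial data is exactly what makes each non-zero term of the expansion land in precisely one summand.

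A second problem is your index set. If $X_d$ is a direct sum over equivalence classes $[\boldw]$ of reduced words of length $d$, the number of summands grows (typically exponentially) in $d$; since the cb bound on $\pi_d$ is obtained by summing completely contractive maps over the summands, this would destroy the linear bound $Cd$ unless you explain how to merge all word classes for a fixed cut into a single Haagerup tensor factor. The paper does exactly this: $L_1={\rm span}\{P_v^fa P_v^{f\,\perp}\}$ ranges over \emph{all} vertices at once and is identified with a column Hilbert space via \cite[Lemma 2.3]{RicardXu}, so for each fixed tuple $(l,k,\Gamma_0,\Gamma_1,\Gamma_2)$ a single completely contractive multiplication map handles every word type, and the number of summands is at most $(\#\Cliq(\Gamma))^3d$. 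Finally, the intertwining is implemented not by an abstract cb identification of subspaces of $A$ and $A_f$ but by explicit partial isometries $\mathcal{Q},\mathcal{R}:L^2(A,\varphi)\to L^2(A_f,\varphi_f)$ built from the shuffle permutations of Definition \ref{Dfn=Sigma}, which conjugate the free-product multiplication map into the graph-product one summand by summand (Proposition \ref{Prop=GeneralKin}); this is where the constant is actually controlled by the clique structure alone, and it is the step your sketch leaves open.
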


\hyphenation{in-equa-li-ty}

In Theorem \ref{Theorem 1} the bound on $\pi_d$ in property \ref{Item=BoundPi} should be regarded as the Khintchine inequality.
Inequalities of this kind have a wide range of applications. We refer in particular to the weak amenability results by Ricard and Xu \cite{RicardXu},  the early connections to Coxeter groups by Bo\. zejko and Speicher \cite{BozejkoSpeicher} and Nou's result on non-injectivity of $q$-Gaussians \cite{Nou}. The second half of this paper gives further applications in the case of (right-angled) Hecke C$^{\ast}$-algebras.

\vspace{0.3cm}

Hecke algebras are deformations of group algebras of a Coxeter group $W$ depending on a multi-parameter $q$ and have been studied since the 1950s. Their development played an important role in representation theory of algebraic groups (see e.g. \cite{IwahoriMatsumoto}, \cite{Bernstein}).
%?? and in the investigation of weighted $L^{2}$-cohomology of Coxeter groups [...], [...].
Hecke algebras naturally act on the Hilbert space $\ell^{2}\left(W\right)$ and thus complete to C$^{\ast}$-algebras (resp. von Neumann algebras) denoted by $C_{r,q}^{\ast}\left(W\right)$ (resp. $\mathcal{N}_{q}\left(W\right)$) which carry a canonical tracial state. In the case of spherical or affine Coxeter groups these operator algebras have been studied early in \cite{Matsumoto}, \cite{KL}, \cite{Lusztig}.   Much later, motivated by the study of weighted $L^2$-cohomology of Coxeter groups, the study of general (non-affine) Hecke  von Neumann algebras was initiated by  Dymara in \cite{Dymara1}   (see also \cite{Da}). Other relevant references are \cite{Dymara2}, \cite{Gar}, \cite{CaspersAPDE}, \cite{CSW} and \cite{RaumSkalski}.

As observed in \cite{CaspersAPDE}, Hecke C$^{\ast}$-algebras (resp. Hecke-von Neumann algebras) of right-angled Coxeter systems $\left(W,S\right)$ can be realized as graph products of finite-dimensional abelian C$^{\ast}$-algebras (resp. von Neumann algebras). Applying Theorem \ref{Theorem 1} we deduce a Haagerup inequality for right-angled Hecke algebras that generalizes Haagerup's inequality for free groups, see \cite{HaagerupExample} and \cite[Section 9.6]{Pisier}. These are inequalities that estimate the operator norm of an operator of length $d$ with the $L^{2}$-norm up to some polynomial bound.

\begin{theorem} \label{Theorem 2}
Let $\left(W,S\right)$ be a right-angled Coxeter group with finite generating set $S$. Then for every multi-parameter $q$ and $x\in\chi_{d}\left(C_{r,q}^{\ast}\left(W\right)\right), d \in \mathbb{N}_{\geq 1}$ we have \begin{eqnarray} \nonumber \left\Vert x\right\Vert \leq Cd\left\Vert x\right\Vert _{2} \end{eqnarray} for some (explicit) constant depending only on $q$ and the graph $\Gamma$.
\end{theorem}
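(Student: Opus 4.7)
The plan is to deduce Theorem~\ref{Theorem 2} as a direct consequence of Theorem~\ref{Theorem 1} applied to the graph product realization of right-angled Hecke $C^{\ast}$-algebras, and then to reduce the Haagerup-tensor estimate to an elementary Hilbert-space estimate at the level of the (finite-dimensional) vertex algebras.

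By the observation of \cite{CaspersAPDE} recalled in the introduction, for a right-angled Coxeter system $(W,S)$ with Coxeter graph $\Gamma$ and multi-parameter $q$, the Hecke $C^{\ast}$-algebra $C_{r,q}^{\ast}(W)$ is naturally isomorphic to a graph product $\ast_{v,\Gamma}(A_v,\varphi_v)$ in which every vertex $C^{\ast}$-algebra $A_v$ is two-dimensional abelian (generated by $1$ and a single self-adjoint element coming from the Hecke generator $T_s$) and every state $\varphi_v$ is the canonical GNS-faithful trace, depending only on $q_v$. Under this identification, the word-length projection $\chi_d$ of the graph product coincides with the orthogonal projection in $\ell^{2}(W)$ onto $\operatorname{span}\{T_w : w\in W,\ \ell(w)=d\}$. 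Hence the hypotheses of Theorem~\ref{Theorem 1} are met, and for $x\in\chi_d(C_{r,q}^{\ast}(W))$ we obtain
\[
\|x\| \;=\; \|\pi_d(j_d(x))\| \;\leq\; \|\pi_d\|_{cb}\,\|j_d(x)\|_{X_d} \;\leq\; Cd\,\|j_d(x)\|_{X_d}.
\]

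The remaining task is to bound $\|j_d(x)\|_{X_d}$ by $\|x\|_2$ up to a constant depending only on $\Gamma$ and $q$. Here the key point is that $X_d$ is, by construction \eqref{Eqn=Xd2}, a direct sum over reduced-word types $\mathbf{t}=(v_1,\dots,v_d)$ of Haagerup tensor products of column and row Hilbert spaces built from the finite-dimensional GNS spaces $L^2(A_{v_i},\varphi_{v_i})^{\circ}$. For such column/row Haagerup tensor products of finite-dimensional Hilbert spaces the Haagerup norm agrees with a matrix operator norm, which is dominated by the corresponding Hilbert--Schmidt norm. Decomposing $x=\sum_{\mathbf{t}} x_{\mathbf{t}}$ along the reduced-word types (a decomposition that is orthogonal in $\ell^{2}(W)$, so that $\|x\|_2^2=\sum_{\mathbf{t}}\|x_{\mathbf{t}}\|_2^2$), and noting that $j_d$ respects this decomposition, this comparison then yields
\[
\|j_d(x)\|_{X_d} \;\leq\; C_q'\,\|x\|_2,
\]
for a constant $C_q'$ depending only on the (finitely many) vertex states $\varphi_v$. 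Combining the two inequalities gives the theorem with $Cd\cdot C_q'$, which can be absorbed into a single constant depending on $\Gamma$ and $q$.

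The only nontrivial step is the second inequality: one has to inspect the explicit definition of $X_d$ in \eqref{Eqn=Xd2} and check that, after restricting to each reduced-word-type component, the Haagerup column/row tensor norm is controlled by the $\ell^{2}$-norm on the same component. This is where the finite-dimensionality of the vertex algebras (and hence of each summand of $X_d$) is crucial: in the infinite-dimensional setting such a bound would fail, but for two-dimensional vertex algebras the Haagerup norm on a single tensor differs from the Hilbert--Schmidt norm only by a universal constant, uniformly in the length $d$. Once this step is verified, the main work has already been done in proving Theorem~\ref{Theorem 1}, and Theorem~\ref{Theorem 2} follows at once.
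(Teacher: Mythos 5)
Your proposal is correct and follows essentially the same route as the paper: Theorem \ref{Theorem 1} gives $\Vert x\Vert \le Cd\,\Vert j_d(x)\Vert_{X_d}$, and the second step is precisely the paper's observation that in the Hecke case each summand of $X_d$ collapses to a matrix algebra $M_{k,d-k-l}(\mathbb{C})$ (since $A_{\Gamma_0}\simeq \mathbb{C}$), on which the operator norm is dominated by the Hilbert--Schmidt norm, combined with an orthogonality computation for the vectors $j_d(T_{\boldv}^{(q)})$. The only imprecision is your description of $X_d$ as a direct sum over reduced-word types: it is in fact indexed by the tuples $(l,k,\Gamma_0,\Gamma_1,\Gamma_2)$, and the word-type decomposition is instead an orthogonal decomposition \emph{within} each matrix summand for the trace inner product --- which is exactly what the paper's orthogonality lemma establishes and what your argument actually needs.
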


The main motivation for the second part of this paper is the main result in \cite{Gar} where it was shown that single parameter Hecke-von Neumann algebras of irreducible right-angled Coxeter systems $\left(W,S\right)$ with $\left|S\right|\geq3$ are factors, up to a $1$-dimensional direct summand.

\begin{theorem} \cite[Garncarek]{Gar} \label{Thm=Garncarek}
Let $\left(W,S\right)$ be an irreducible right-angled Coxeter system with $\left|S\right|\geq3$. Then the single parameter Hecke-von Neumann algebra $\mathcal{N}_{q}\left(W\right)$ is a factor if and only if $q\in\left[\rho,\rho^{-1}\right]$ where $\rho$ is the radius of convergence of the spherical growth series $\sum_{w\in W}z^{\left|w\right|}$ of $W$. Moreover, for $q$ outside this interval, $\mathcal{N}_{q}\left(W\right)$ is a direct sum of a factor and $\mathbb{C}$.
\end{theorem}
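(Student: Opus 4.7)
The plan is to combine an explicit character construction when $q$ is extremal with a rigidity argument on central elements when $q$ is moderate. The unifying principle is that a von Neumann algebra $M$ admits a normal $\ast$-character if and only if it has a non-trivial central projection $p$ with $pM\cong\mathbb{C}$, so identifying all normal characters of $\mathcal{N}_{q}(W)$ pins down precisely when and how it fails to be a factor.

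For the non-factor direction, the Hecke quadratic relation $(T_{s}-q)(T_{s}+q^{-1})=0$ furnishes two $\ast$-characters at the algebraic level, $\chi_{+}(T_{s})=q$ and $\chi_{-}(T_{s})=-q^{-1}$, with multiplicative extensions along reduced words. To determine when either descends to a normal character of $\mathcal{N}_{q}(W)$, I would look for a unit vector $v\in\ell^{2}(W)$ satisfying $T_{s}v=\chi(T_{s})v$ for every $s\in S$; solving the eigenvalue equations using the explicit action of $T_{s}$ on the basis vectors $\delta_{w}$ forces $v$ to be proportional to a Poincaré-type sum $\sum_{w}\chi(T_{w})\delta_{w}$. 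Convergence of this sum in the sense needed for normality is governed by the spherical growth series $\sum_{w}z^{|w|}$ of radius $\rho$, and works out precisely when $q$ falls outside the closed interval $[\rho,\rho^{-1}]$. This yields the decomposition $\mathcal{N}_{q}(W)=\mathbb{C}\oplus M_{q}$ for $q\notin[\rho,\rho^{-1}]$.

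For the factor direction, take $q\in[\rho,\rho^{-1}]$ and $x\in Z(\mathcal{N}_{q}(W))$ with Fourier coefficients $c_{w}=\tau(T_{w}^{*}x)\in\ell^{2}(W)$. Expanding $T_{s}x=xT_{s}$ via the Hecke multiplication rules yields, for each $s\in S$ and $u\in W$, an identity of the form $c_{su}-c_{us}=(q-q^{-1})c_{u}\cdot\varepsilon(s,u)$ with $\varepsilon(s,u)\in\{-1,0,1\}$ determined by the relative lengths $|su|,|us|,|u|$. Irreducibility of $(W,S)$ together with $|S|\geq 3$ is then used to propagate these local relations across $W$ and severely constrain the sequence $(c_{w})$. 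The $\ell^{2}$-condition, combined with the definition of $\rho$ as the growth-series radius, forces the only surviving solution to be the constant one, giving $x\in\mathbb{C}\cdot 1$ and hence factoriality.

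The principal difficulty lies in the rigidity direction. The algebraic system produced by centrality is not a simple length-recursion: it couples coefficients indexed by elements of $W$ in a way that depends nontrivially on whether $s$ commutes with letters at the left or right end of the reduced expression of $u$. Organizing the propagation so that the $\ell^{2}$-constraint can be converted into a quantitative obstruction whose critical parameter is exactly the radius $\rho$ is the core technical challenge, and this is where irreducibility (connectedness of the Coxeter diagram) and $|S|\geq 3$ (excluding dihedral exceptions) enter essentially. By contrast, the character extraction in the non-factor direction reduces to an essentially explicit eigenvalue computation once the candidate vector is identified.
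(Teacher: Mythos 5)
This theorem is imported verbatim from Garncarek's paper \cite{Gar}; the present paper gives no proof of it (the only ingredient it re-uses is the central projection $E_q$ satisfying $E_q T_{\mathbf{w}}^{(q)} = q_{\mathbf{w}}^{1/2} E_q$, invoked in the proof of Lemma \ref{non-simplicity1}), so there is no in-paper argument to measure yours against. On its own terms, your outline of the non-factoriality direction is essentially the standard and correct one, up to normalization: with this paper's conventions the quadratic relation reads $(T_s^{(q)}-q_s^{1/2})(T_s^{(q)}+q_s^{-1/2})=0$, so the eigenvalues are $q^{1/2}$ and $-q^{-1/2}$ rather than $q$ and $-q^{-1}$, the candidate eigenvector is $\sum_{\mathbf{w}} q^{|\mathbf{w}|/2}\delta_{\mathbf{w}}$, and its membership in $\ell^2(W)$ is exactly convergence of $\sum_{\mathbf{w}} q^{|\mathbf{w}|}$, i.e. $q<\rho$, with $q>\rho^{-1}$ handled by the symmetry of Proposition \ref{unitary}. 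You should also note why nothing happens at the endpoint: the growth series is a rational function with a pole at $\rho$, so $\sum_{\mathbf{w}}\rho^{|\mathbf{w}|}$ diverges and no normal character survives at $q=\rho$.

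The genuine gaps are in the other half. First, your ``unifying principle'' only detects one-dimensional central summands: absence of a normal character does not imply factoriality, and conversely producing the character for $q\notin[\rho,\rho^{-1}]$ only exhibits the summand $\mathbb{C}E_q$; the ``moreover'' clause still requires proving that $(1-E_q)\mathcal{N}_q(W)$ is a factor, which your plan never addresses. Second, and more seriously, the entire content of the factoriality direction is the propagation argument you explicitly defer in your last paragraph: showing that the coupled relations extracted from $T_s^{(q)}x=xT_s^{(q)}$, together with $(c_{\mathbf{w}})\in\ell^2(W)$, annihilate all coefficients with $\mathbf{w}\neq e$ precisely when $q\in[\rho,\rho^{-1}]$. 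That step is where all of Garncarek's work lies -- a quantitative analysis of how conjugation by generators redistributes coefficient mass between spheres, calibrated against the growth rate $\rho^{-1}$, using irreducibility and $|S|\geq 3$ in an essential and non-obvious way -- and your proposal names the difficulty without resolving it. As written, the attempt identifies the right objects but contains a proof of neither the factoriality statement nor the direct-sum decomposition.
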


We take this theorem as a motivation to study the simplicity and the uniqueness of the tracial state of (right-angled) Hecke C$^{\ast}$-algebras. By applying Theorem \ref{Theorem 2} in combination with an averaging argument inspired by \cite{Powers} we are able to give some partial answers. Note that in the single parameter case $q=1$ this has been done in \cite{DeLaHarpe} (see also \cite{Fe} and \cite{Cornulier}).

\begin{theorem} \label{Theorem 3}
Let $\left(W,S\right)$ be an irreducible Coxeter system and $q$ some multi-parameter.
\begin{enumerate}
\item If $\left(W,S\right)$ is of spherical or affine type, then $C_{r,q}^{\ast}\left(W\right)$ is not simple and does not have a unique tracial state for any choice of the parameter $q$;\\
\item If $\left(W,S\right)$ is of non-affine type and the multi-parameter $\left(q_{s}^{\epsilon_{s}}\right)_{s\in S}$ with   \begin{eqnarray}
\nonumber
\epsilon_s:= \begin{cases}
+1, & \text{if }q_s \leq 1\\
-1, & \text{if }q_s >1
\end{cases}
\end{eqnarray} lies in the closure of the region of convergence of the multivariate growth series of $W$, then $C_{r,q}^{\ast}\left(W\right)$ is not simple and does not have unique tracial state;\\
\item If $\left(W,S\right)$ is right-angled with $\left|S\right|\geq3$, then there exists an open neighbourhood $\mathcal{U}$ of the multi-parameter $1\in\mathbb{R}^{S}_{>0}$ such that $C_{r,q}^{\ast}\left(W\right)$ is simple for all $q\in\mathcal{U}$.
\end{enumerate}
\end{theorem}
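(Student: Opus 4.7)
The plan is to handle the three parts separately, as they rest on genuinely different mechanisms.

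For part (1), in the spherical case $W$ is finite, so $C_{r,q}^{\ast}(W)=\mathcal{N}_q(W)$ is a finite-dimensional C$^\ast$-algebra; its representation theory (at a minimum two distinct $\ast$-characters $T_s\mapsto q_s$ and $T_s\mapsto -1$, both obtained by solving the quadratic relation $T_s^2=(q_s-1)T_s+q_s$) shows it decomposes as several matrix summands, immediately giving non-simplicity and non-uniqueness of the trace. For the affine case the multivariate growth series of $W$ converges on all of $\mathbb{R}_{>0}^S$ because affine Coxeter groups have polynomial growth, so I would construct a bounded $\ast$-character on $C_{r,q}^{\ast}(W)$ by extending multiplicatively a choice $T_s\mapsto z_s\in\{q_s,-1\}$ and verifying $\ell^2(W)$-boundedness via the standard Poincar\'e-type series estimate. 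This one-dimensional quotient differs from the vector-state trace (which is faithful on the reduced algebra), producing both a proper ideal and a second tracial state.

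For part (2), the same character construction applies: when the parameter $(q_s^{\epsilon_s})$ lies in the closure of the region of convergence of the multivariate growth series $\sum_{w\in W}\prod_{s}q_s^{|w|_s}$, an Abel/positivity argument along the word-length filtration shows that the $\ast$-character sending $T_s$ to the appropriate root of the Hecke quadratic extends boundedly to $C_{r,q}^{\ast}(W)$. Non-triviality against the canonical trace is automatic, again giving non-simplicity and non-uniqueness of the trace.

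For part (3), the approach is a Powers-type averaging argument in the spirit of \cite{Powers} and \cite{DeLaHarpe}, driven by Theorem \ref{Theorem 2}. Given self-adjoint $x\in C_{r,q}^{\ast}(W)$ with $\tau(x)=0$, I would approximate $x$ by $\sum_{d\leq N}\chi_d(x)$ and treat each length-$d$ component separately. Because $|S|\geq 3$ and $(W,S)$ is right-angled, I can locate generators $s_1,s_2,s_3\in S$ that do not pairwise commute, so the corresponding unitary elements in the Hecke algebra behave, at $q=1$, like a free system. Conjugating $\chi_d(x)$ by suitable products $u_i$ built from these generators yields translates whose supports in $\ell^2(W)$ are nearly disjoint, hence
\begin{equation*}
\Bigl\|\tfrac{1}{n}\sum_{i=1}^n u_i\chi_d(x)u_i^{\ast}\Bigr\|_2\leq \tfrac{C'}{\sqrt{n}}\|\chi_d(x)\|_2,
\end{equation*}
and Theorem \ref{Theorem 2} upgrades this to an operator norm bound $\leq Cdn^{-1/2}\|x\|$. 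Summing over $d\leq N$ and sending $n\to\infty$ produces a Dixmier-type averaging, which yields both simplicity and trace uniqueness.

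The main obstacle lies in part (3): one must carefully design the averaging unitaries so that the disjointness of supports persists for perturbed parameters $q$ near $1$, and must control how the Haagerup constant $C=C(q,\Gamma)$ in Theorem \ref{Theorem 2} depends on $q$. This is where the open neighborhood $\mathcal{U}$ enters: at $q=1$ the classical Powers estimate of \cite{DeLaHarpe} succeeds, and a continuity/perturbation argument in $q$ shows the Dixmier averaging survives on a small open neighborhood of $1$, while the decay in $n$ dominates the growth in $d$ because $C(q,\Gamma)$ is locally bounded.
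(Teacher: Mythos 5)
Your parts (1) and (2) follow essentially the same route as the paper: a multiplicative $\ast$-character determined by a root of the Hecke quadratic, bounded on $\ell^2(W)$ precisely when the relevant parameter lies in the closure of the region of convergence of the growth series, giving a codimension-one ideal and a second trace. One factual slip in part (1): the multivariate growth series of an affine (infinite) $W$ does \emph{not} converge on all of $\mathbb{R}_{>0}^S$ --- it diverges as soon as some $q_s>1$. Polynomial growth only gives convergence on $[0,1)^S$, so $[0,1]^S$ lies in the closure; for $q_s>1$ you must first apply the sign automorphism $T_s^{(q)}\mapsto -T_s^{(q')}$ with $q'_s=q_s^{-1}$ (the paper's Proposition \ref{unitary}) before invoking convergence. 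With that correction the mechanism is the paper's.

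Part (3) has a genuine gap. Your scheme is a single Powers average over $n$ conjugators $u_i$ with $n\to\infty$, an $L^2$-decay of order $n^{-1/2}$, and the Haagerup inequality to upgrade to operator norm. Two things break. First, conjugation does not preserve word length: $u_i\chi_d(x)u_i^\ast$ is supported on lengths up to $d+2|u_i|$, and to make $n$ conjugates of a set nearly disjoint the $|u_i|$ must grow with $n$ (for the natural choice $u_i=g^i$ one has $|u_i|\sim i$), so the Haagerup constant you must pay is polynomial in $d+2\max_i|u_i|$ and the quotient $(d+2\max_i|u_i|)^{3/2}/\sqrt{n}$ need not tend to $0$. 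Second, and more seriously, for $q\neq 1$ the unitaries $\pi_{q,1}(T_{\mathbf{w}}^{(1)})$ do not permute the basis $(\delta_{\mathbf v})$, so there is no exact disjointness of supports; your proposed fix is a continuity argument in $q$, but that continuity is only uniform over a \emph{fixed finite} set of conjugators. Since your construction requires $n\to\infty$, the neighbourhoods $\mathcal{U}_n$ on which the estimate survives may shrink to $\{1\}$, and you never obtain a single open $\mathcal{U}$. The paper avoids both problems by fixing \emph{one} four-element set $F=\{e,\mathbf{w}_1,\mathbf{w}_2,\mathbf{w}_3\}$ satisfying a Powers-type condition, proving via Ching's variant of Puk\'anszky's $14\varepsilon$-argument that the induced operator $\tilde\Phi$ on $\ell^2(W)\ominus\mathbb{C}\delta_e$ has norm strictly less than $1$, deducing $\Vert\tilde\Phi_q\Vert<1$ for $q$ in a single neighbourhood $\mathcal{U}$ by continuity, and then \emph{iterating}: $\Phi_q^l$ decays geometrically in $L^2$ while the supporting lengths grow only linearly in $l$, so the Haagerup inequality's polynomial factor $((2l+1)d)^{3/2}$ is dominated by $\Vert\tilde\Phi_q\Vert^l$. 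You would need to restructure part (3) along these lines (fixed averaging set, strict $L^2$-contraction on the trace-orthogonal complement, iteration) for the argument to close.
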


In the case of free products of abelian Coxeter groups we further give a full answer to the simplicity question by making use of Dykema's results \cite{Dykema} on the simplicity of free products of finite dimensional abelian C$^{\ast}$-algebras. If the Hecke C$^\ast$-algebra has a unique trace then these simplicity results imply factoriality of the Hecke-von Neumann algebra, see the Remarks \ref{Rmk=Factor} and \ref{Rmk=Factor2}. We thus extend the range of $q$ for which factoriality of $\cN_q(W)$ holds. In particular we obtain first results in the multi-parameter case which gives a partial answer to Question 2 in \cite{Gar}.  In the case of free products of abelian Coxeter groups our answer fully settles this question.

In the final part of this paper, we prove that Hecke C$^{\ast}$-algebras are exact and characterize their nuclearity in terms of the properties of the underlying group. In particular, we show the following.

\begin{theorem} \label{Theorem 4}
Let $\left(W,S\right)$ be a Coxeter system and $q$ some multi-parameter. Then $C_{r,q}^{\ast}\left(W\right)$ is exact. Further,   $C_{r,q}^{\ast}\left(W\right)$ is nuclear if and only if $(W,S)$ is of spherical or affine type.
\end{theorem}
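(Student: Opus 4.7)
My plan is to split the theorem into two assertions, exactness and the nuclearity characterization, and in each case to reduce the general Coxeter system to a tractable subcase.

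\textbf{Exactness.} I would first treat the right-angled case using the identification of \cite{CaspersAPDE}: for right-angled $(W,S)$, the algebra $C_{r,q}^{\ast}(W)$ is the graph product of the two-dimensional abelian C$^{\ast}$-algebras $\mathbb{C}^{2}$ (with states determined by $q_{s}$) over the commutation graph of $(W,S)$. Since finite-dimensional algebras are trivially exact and graph products of unital exact C$^{\ast}$-algebras are exact by \cite{CaspersFima}, this case is immediate. For a general Coxeter system, the strategy is to embed $C_{r,q}^{\ast}(W)$ isometrically (and in a trace-preserving way) into an exact ambient algebra. Two natural candidates present themselves: an auxiliary right-angled Hecke C$^{\ast}$-algebra $C_{r,\widetilde{q}}^{\ast}(\widetilde{W})$ obtained by ``resolving'' each braid relation into commutations (with the canonical conditional expectation restricting to the identity); or a reduced crossed product $A\rtimes_{r}W$ with $A$ commutative, using that Coxeter groups are exact (they have the Haagerup property via the action on the Davis complex). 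The main obstacle is to make such an embedding explicit and to verify that it is faithful at the C$^{\ast}$-algebraic level.

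\textbf{Nuclearity, ``if'' direction.} If $(W,S)$ is spherical then $W$ is finite and $C_{r,q}^{\ast}(W)$ is finite-dimensional, hence nuclear. If $(W,S)$ is affine then $W$ is virtually abelian, and a Bernstein--Lusztig-type presentation (cf.\ \cite{Lusztig}) realizes $C_{r,q}^{\ast}(W)$ as a crossed product of a commutative C$^{\ast}$-algebra by a finite Weyl group, which is nuclear.

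\textbf{Nuclearity, ``only if'' direction.} I would argue contrapositively. By the classification of spherical and affine Coxeter diagrams, if $(W,S)$ is neither spherical nor affine then it contains a rank-$\leq 3$ parabolic subsystem $(W_{T},T)$ of ``hyperbolic'' type (e.g.\ a triangle group with $\sum 1/m_{ij}<1$, or a three-generator subsystem with at least one $m_{ij}=\infty$). The standard trace-preserving conditional expectation $E_{T}\colon C_{r,q}^{\ast}(W)\to C_{r,q_{T}}^{\ast}(W_{T})$, given by $E_{T}(T_{w})=T_{w}$ for $w\in W_{T}$ and $0$ otherwise, is ucp; consequently nuclearity of the ambient algebra would descend to $C_{r,q_{T}}^{\ast}(W_{T})$. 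The main obstacle is to establish non-nuclearity of these small parabolic Hecke C$^{\ast}$-algebras for \emph{all} multi-parameters $q_{T}$. In right-angled sub-cases one can realize them as free products of copies of $\mathbb{C}^{2}$ and combine the Khintchine inequality (Theorem \ref{Theorem 1}) with the Haagerup inequality (Theorem \ref{Theorem 2}) to detect non-amenable growth obstructing nuclearity; the few remaining non-right-angled small cases require a separate direct argument via the geometric action on $\ell^{2}(W_{T})$.
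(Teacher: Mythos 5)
Your plan has genuine gaps in both halves. For exactness, the step you flag as "the main obstacle" is in fact the whole content of the argument, and the paper resolves it with a one-line observation you are missing: writing $P_s$ for the projection onto $\overline{\lsp}\{\delta_{\boldv} : |s\boldv|<|\boldv|\}$, one has $T_s^{(q)}=T_s^{(1)}+p_s(q)P_s$ with $P_s\in\ell^\infty(W)\subseteq\cB(\ell^2(W))$, so $C_{r,q}^\ast(W)$ sits inside the uniform Roe algebra $\ell^\infty(W)\rtimes_r W$, which is nuclear because $W$ acts amenably on a compact space (Dranishnikov--Januszkiewicz); exactness then follows since subalgebras of exact algebras are exact. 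Note also that your stated reason for exactness of $W$ --- "the Haagerup property via the action on the Davis complex" --- is not valid: the Haagerup property does not imply exactness. Your first candidate (an embedding into an auxiliary right-angled Hecke algebra obtained by "resolving" braid relations) is not known to exist and should be dropped.

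For the "only if" direction, your reduction to a rank $\leq 3$ standard parabolic of hyperbolic type is false: the compact hyperbolic (Lann\'er) simplex Coxeter groups of rank $4$ and $5$ are irreducible, non-spherical and non-affine, yet \emph{every} proper standard parabolic --- in particular every rank $\leq 3$ one --- is spherical. The paper avoids standard parabolics entirely: a non-affine irreducible $W$ contains a free group $\langle \bolda_1,\bolda_2\rangle\cong\F_2$ on two (non-standard) elements, the von Neumann subalgebra of $\cN_q(W)$ generated by $T_{\bolda_1}^{(q)}$ and $T_{\bolda_2}^{(q)}$ is a free product of two infinite-dimensional abelian von Neumann algebras, hence non-injective by Ueda's theorem, and since there is a trace-preserving normal conditional expectation onto it, $\cN_q(W)$ is non-injective and $C_{r,q}^\ast(W)$ cannot be nuclear. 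This also sidesteps a second weak point in your plan: you assert that nuclearity descends along a ucp conditional expectation to a C$^\ast$-subalgebra, which is not a standard fact; the clean statement is that \emph{injectivity} of von Neumann algebras descends through normal conditional expectations, which is why the paper passes to the von Neumann level. Finally, your proposed mechanism for non-nuclearity of the small cases ("Khintchine plus Haagerup inequalities detect non-amenable growth obstructing nuclearity") does not yield non-nuclearity by itself, and deferring the non-right-angled cases to "a separate direct argument" leaves the claim unproved. Your "if" direction is essentially fine, except that a spherical system with $|S|=\infty$ is only locally finite, so one should say $C_{r,q}^\ast(W)$ is an inductive limit of finite-dimensional algebras rather than finite-dimensional; for the affine case the paper cites Matsumoto's type I result, which plays the role of your Bernstein--Lusztig presentation.
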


\subsection*{\normalfont \emph{Structure}}

In Section \ref{notation} we recall the general theory of graph products, Coxeter groups and introduce multi-parameter Hecke algebras (resp. their operator algebras). In Section \ref{A graph product Khintchine-Haagerup inequality} we obtain the Khintchine inequality from Theorem \ref{Theorem 1}. The consequences of this will be collected in Section \ref{Inequalities} where we reformulate this Khintchine inequality in the setting of right-angled Hecke C$^{\ast}$-algebras and deduce the Haagerup inequality from Theorem \ref{Theorem 2}. Section \ref{Sect=Iso} discusses isomorphism properties of Hecke algebras which are known to experts, see for instance \cite{Matsumoto}, \cite{ScottOkun}. In Section \ref{Sect=Simplicity} we will then study the simplicity and the uniqueness of the tracial state of Hecke C$^{\ast}$-algebras. In the case of spherical and affine type Coxeter systems and for free products of abelian groups we give full answers, whereas in the case of irreducible right-angled Coxeter systems we give partial answers (see Theorem \ref{Theorem 3}) by making use of the results in Section \ref{Inequalities}. Finally, Section \ref{FurtherProperties} characterizes exactness and nuclearity of Hecke C$^{\ast}$-algebras (see Theorem  \ref{Theorem 4})  and we give (counter)examples regarding the isomorphism properties from Section \ref{Sect=Iso}.

%%%%%%%%%%%%%%%%%%%%%%%%%%%%%%%%%%%%%%%%%%%%%%%%%%%%

\subsection*{Acknowledgements} The authors thank Erik Opdam and Maarten Solleveld for very useful communication on isomorphisms of Hecke algebras.

%%%%%%%%%%%%%%%%%%%%%%%%%%%%%%%%%%%%%%%%%%%%%%%%%%%

\section{Preliminaries and notation} \label{notation}

%%%%%%%%%%%%%%%%%%%%%%%%%%%%%%%%%%%%%%%%%%%%%%%%%%%

\subsection{General notation}
We use the notation $\mathbb{N}:=\left\{ 1,2,\ldots\right\}$ and $\mathbb{N}_{\geq0}:=\left\{ 0,1,2,\ldots\right\}$. By $M_{k,l}(\mathbb{C})$ we denote the $k$ times $l$ matrices and we let $M_k(\mathbb{C}) := M_{k,k}(\mathbb{C})$.
We let $\mathbb{Z}_2$ be the group with two elements.
We write $\delta(\mathcal{P})$ for the function that is 1 if a statement $\mathcal{P}$ is true and  which is 0 otherwise. We write $\cB( \mathcal{H})$   for the bounded  operators on a Hilbert space $\mathcal H$.

For standard results on von Neumann algebras we refer to  \cite{Takesaki}.   For a tracial von Neumann algebra $\left(M,\tau \right)$ with faithful $\tau$ we denote the norm induced by the tracial state by $\left\Vert x \right\Vert_2 := \tau(x^\ast x)^{\frac{1}{2}}$. The following lemma is standard and shall be used several times.

\begin{lem}\label{Lem=StandardExt}
Let $A$ and $B$ be unital C$^\ast$-algebras with respective faithful traces $\tau_A$ and $\tau_B$. Let $A_0 \subseteq A$ and $B_0 \subseteq B$ be dense $\ast$-subalgebras of $A$ and $B$ and let $\pi: A_0 \rightarrow B_0$ be a $\ast$-isomorphism such that $\tau_B \circ \pi = \tau_A$. Then $\pi$ extends to a $\ast$-isomorphism $A \rightarrow B$ as well as $\pi_{\tau_A}(A)'' \rightarrow \pi_{\tau_B}(B)''$ where $\pi_{\tau_A}$ and $\pi_{\tau_B}$ are the GNS-representations.
\end{lem}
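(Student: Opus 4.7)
The plan is to transport everything to the GNS Hilbert space level via a unitary built from the trace-preserving isomorphism $\pi$, and then exploit faithfulness of the traces to conclude that $\pi$ is automatically $\mathrm{C}^\ast$-isometric.

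First I would define a map on the dense subspace $\widehat{A_0} \subseteq H_{\tau_A}$ by $U\hat{a} := \widehat{\pi(a)}$. The identity
\[
\langle \widehat{\pi(a)}, \widehat{\pi(b)}\rangle = \tau_B(\pi(b)^\ast\pi(a)) = \tau_B(\pi(b^\ast a)) = \tau_A(b^\ast a) = \langle \hat a,\hat b\rangle
\]
together with the fact that $A_0$ is norm-dense in $A$ (hence $\widehat{A_0}$ is dense in $H_{\tau_A}$, using $\|\hat a\| \leq \|a\|$), and the symmetric statement for $B_0 = \pi(A_0)$, show that $U$ extends uniquely to a unitary $U : H_{\tau_A} \to H_{\tau_B}$.

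Next I would verify the intertwining relation $U\pi_{\tau_A}(a)U^\ast = \pi_{\tau_B}(\pi(a))$ for every $a \in A_0$ by evaluating both sides on the dense set $\widehat{B_0}$: for $b \in A_0$,
\[
U\pi_{\tau_A}(a)U^\ast \widehat{\pi(b)} = U\widehat{ab} = \widehat{\pi(a)\pi(b)} = \pi_{\tau_B}(\pi(a))\widehat{\pi(b)}.
\]
Because both $\tau_A$ and $\tau_B$ are faithful, the GNS representations $\pi_{\tau_A}$ and $\pi_{\tau_B}$ are isometric, so for $a \in A_0$,
\[
\|\pi(a)\|_B = \|\pi_{\tau_B}(\pi(a))\| = \|U\pi_{\tau_A}(a)U^\ast\| = \|\pi_{\tau_A}(a)\| = \|a\|_A.
\]
Hence $\pi$ is a $\ast$-isometry between the dense $\ast$-subalgebras $A_0$ and $B_0$ and extends by continuity to an isometric $\ast$-homomorphism $\bar{\pi} : A \to B$. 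Its image is closed (isometric) and contains the dense subspace $B_0$, so $\bar\pi$ is surjective, i.e.\ a $\ast$-isomorphism.

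Finally, for the von Neumann algebra statement I would use that conjugation by the unitary $U$ is a normal $\ast$-isomorphism of $\mathcal{B}(H_{\tau_A})$ onto $\mathcal{B}(H_{\tau_B})$, hence preserves bicommutants. Since it sends $\pi_{\tau_A}(A_0)$ onto $\pi_{\tau_B}(B_0)$, and each of these is strongly dense in the respective double commutant (by Kaplansky, using that $A_0$ is norm-dense in $A$ and similarly for $B$), it follows that $\operatorname{Ad}(U)$ restricts to a $\ast$-isomorphism $\pi_{\tau_A}(A)'' \to \pi_{\tau_B}(B)''$ extending $\pi$. There is no real obstacle here; the only point requiring minor care is the well-definedness and boundedness of $U$ on the dense subspaces, which is immediate from the trace-preservation hypothesis.
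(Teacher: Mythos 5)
Your proof is correct and follows essentially the same route as the paper: both construct the unitary $U$ between the GNS spaces from the trace-preservation hypothesis and extend $\pi$ by conjugating with $U$. You merely spell out details the paper leaves implicit (the intertwining relation, the isometry of the GNS representations via faithfulness of the traces, and the passage to bicommutants).
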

\begin{proof}
This is essentially \cite[Theorem 5.1.4]{Murphy}.
Without loss of generality we may assume that $A$ and $B$ are represented on their GNS-spaces $L^2(A, \tau_A)$ and $L^2(B, \tau_B)$ with cyclic vectors $\Omega_A$ and $\Omega_B$. Then $U:  L^2(A, \tau_A) \rightarrow L^2(B, \tau_B):  a \Omega_A \mapsto \pi(a) \Omega_B, a \in A_0$ is unitary and $\pi(a) = U a U^\ast$. Therefore $\pi$ extends to a map $A \rightarrow B$ as well as $A'' \rightarrow B''$.
\end{proof}

We use the symbol $\otimes$ for the reduced tensor product of C$^\ast$-algebras (through Takesaki's theorem \cite[Section 6.4]{Murphy} also known as the spatial or minimal tensor product) and $\overline{\otimes}$ for the tensor product of von Neumann algebras. We denote the Haagerup tensor product of operator spaces by $\otimes_h$ (see below).

\subsection{Column and row Hilbert spaces}
For the theory of operator spaces we refer to \cite{EffrosRuan} and \cite{Pisier}.  The column Hilbert space of dimension $k$ will be denoted by $\rmC_k$ and the row Hilbert space of dimension $k$ will be denoted by $\rmR_k$. $\rmC_k$ is the operator space spanned by the matrix units $e_{i,0}, i =1, \ldots, k$ in $M_k(\mathbb{C})$. Its operator space structure is the restriction of the operator space structure of $M_k(\mathbb{C})$ as a C$^\ast$-algebra. Concretely,
\[
\Vert \sum_{i=1}^k x_i \otimes e_{i, 0} \Vert_{M_l(\mathbb{C}) \otimes \rmC_k} =  \Vert \sum_{i=1}^k x_i^\ast x_i \Vert^{\frac{1}{2}}, \qquad x_i \in M_l(\mathbb{C}), l \in \mathbb{N}.
\]
Similarly, $\rmR_k$ is the operator space spanned by matrix units $e_{0,i}, i =1, \ldots, k$ in $M_k(\mathbb{C})$ and it inherits the operator space structure of  $M_k(\mathbb{C})$. We have,
\[
\Vert \sum_{i=1}^k x_i \otimes e_{0,i} \Vert_{M_l(\mathbb{C}) \otimes \rmR_k} =  \Vert \sum_{i=1}^k x_i x_i^\ast \Vert^{\frac{1}{2}}, \qquad x_i \in M_l(\mathbb{C}), l \in \mathbb{N}.
\]
For the Haagerup tensor product of operator spaces we refer to \cite[Section 9]{EffrosRuan}. We shall mainly need the completely isometric identifications (see \cite[Proposition 3.9.4 and 3.9.5]{EffrosRuan}) for $k,l \in \mathbb{N}$,
\[
\rmC_k \hotimes \rmC_l \simeq \rmC_{k+l}, \qquad \rmR_k \hotimes \rmR_l \simeq \rmR_{k+l}, \qquad \rmC_k \hotimes \rmR_l \simeq M_{k,l}(\mathbb{C}).
\]

\subsection{Graphs} \label{Graphs}

Let $\Gamma$ be a {\bf simplicial graph} with vertex set $V\Gamma$ and edge set $E\Gamma\subseteq V\Gamma\times V\Gamma$. Simplicial means that $\Gamma$ has no double edges and that $\left(v,v\right)\notin E\Gamma$ for any $v\in V\Gamma$. We will always assume that $\Gamma$ is finite and undirected. For $v\in V\Gamma$ we denote by $\text{Link}\left(v\right)$ the set of all $w\in V\Gamma$ such that $\left(v,w\right)\in E\Gamma$. For $X \subseteq V\Gamma$ we shall write $\Link(X) := \cap_{v \in X} \Link(v)$ for the common link. Further, we define $\Link(\emptyset{}) := V \Gamma$ (all points). We may view $\Link(X)$ as a subgraph of $\Gamma$ by declaring the edge set of $\Link(X)$ to be $\{ (v,w) \in \Link(X) \times \Link(X) \mid (v,w) \in E\Gamma \}$, so that it inherits precisely the edges of $\Gamma$.

A clique in the graph $\Gamma$ is a subgraph $\Gamma_{0}\subseteq\Gamma$ in which every two vertices share an edge. We write $\Cliq$ or $\Cliq(\Gamma)$  for the set of {\bf cliques} in $\Gamma$ and $\Cliq(l)$ or $\Cliq(\Gamma,l)$ for the set of cliques with $l$ vertices. We will always assume that the empty graph is in $\Cliq$. $\text{Comm}\left(\Gamma_{0}\right)$ is the set of pairs $\left(\Gamma_{1},\Gamma_{2}\right)\in\text{Link}\left(\Gamma_{0}\right)\times\text{Link}\left(\Gamma_{0}\right)$ such that $\Gamma_{1},\Gamma_{2}\in\text{Cliq}$ and $\Gamma_{1}\cap\Gamma_{2}=\emptyset$.

 In this paper a {\bf word} refers to an expression $\boldv = v_1 v_2 \cdots v_n$ with $v_i \in V\Gamma$, i.e. a concatenation of elements in $V\Gamma$ which we call the {\bf letters}. Words will be denoted by bold face.  We say that two words are {\bf shuffle equivalent} (also known as {\bf II-equivalent}) if they are in the same equivalence class of the equivalence relation generated by
 \begin{itemize}
 \item $v_1 \cdots v_{i-1} v_i v_{i+1} v_{i+2} \cdots v_n \sim v_1 \cdots v_{i-1} v_{i+1} v_i v_{i+2} \cdots v_n$ if $(v_i, v_{i+1}) \in E\Gamma$.
 \end{itemize}
 We say that two words are equivalent, denoted by the symbol $\simeq$, if they are equivalent  through shuffle equivalence and the additional relation:
 \begin{itemize}
 \item $v_1 \cdots v_i v_{i+1} v_{i+2} \cdots v_n \sim v_1 \cdots v_i v_{i+2} \cdots v_n$ if $v_{i} = v_{i+1}$.
 \end{itemize}
 A word $v_1 \cdots v_n$ is called {\bf reduced} if whenever $v_i = v_j, i < j$ then there is $i < k < j$ such that $(v_i,v_k), (v_j, v_k) \not \in E\Gamma$.
  If $\boldv$ and $\boldw$ are reduced words and $\boldv \simeq \boldw$ then necessarily $\boldv$ and $\boldw$ are shuffle equivalent. For a word $\boldw$ we define its {\bf length} $\vert \boldw \vert$ as the number of letters in the shortest representative of $\boldw$ up to equivalence. We say that a word $\boldv$ {\bf starts with} $v \in V\Gamma$ if it is equivalent to a reduced word $v v_1 \cdots v_n, v_i \in V\Gamma$. Similarly we say that a word $\boldv$  {\bf ends with} $v \in V\Gamma$ if it is equivalent to a reduced word $v_1 \cdots v_n v, v_i \in V\Gamma$.

\subsection{Graph products of groups} \label{Graphs}
As before let $\Gamma$ be a simplicial graph. The following construction goes back to Green's thesis \cite{Green}.
For each $v\in V\Gamma$ let $G_{v}$ be a discrete group. Then the {\bf graph product group} $\ast_{v, \Gamma} G_{v}$ is the discrete group obtained from the free product of $G_{v}$, $v\in V\Gamma$ by adding additional relations $\left[s,t\right]=1$ for all $s\in G_{v}$, $t\in G_{w}$ with $\left(v,w\right)\in E\Gamma$. Special (extremal) cases of graph products are free products (graph with no edges) and Cartesian products (complete graphs). Right-angled Coxeter and right-angled Artin groups are natural examples of graph products.  A right-angled Coxeter group can be seen as a graph product where all groups $G_v, v \in V\Gamma$ are equal to $\mathbb{Z}_2$. A right-angled Artin group is a graph product with $G_v = \mathbb{Z}, v \in V\Gamma$.

\subsection{Graph products of operator algebras}\label{Sect=GraphProduct}
In \cite{CaspersFima} the concept of graph products has been translated to the operator algebraic setting. %Here Fima and the first-named author developed the theory of reduced and universal C$^{\ast}$-algebraic graph products as well as graph products of von Neumann algebras and quantum groups.
The construction is a generalization of free products by adding commutation relations depending on the underlying graph. We present a slightly different viewpoint of \cite{CaspersFima} by identifying  Hilbert spaces up to shuffle equivalence. This makes the notation much shorter and yields the same construction.

Let $\Gamma$ be a finite simplicial graph. For $v \in V\Gamma$ let $A_v$ be a unital C$^\ast$-algebra. Let $\varphi_v$ be a GNS-faithful state on $A_v$, meaning that the GNS-representation of $\varphi_v$ is faithful.  Set $A_v^\circ$ as the space of $a \in A_v$ with $\varphi_v(a) = 0$. For $a \in A_v$ set
\[
a^\circ = a - \varphi_v(a) 1 \in A_v^\circ.
\]
  Let $L^2(A_v^\circ, \varphi_v)$ be the closure of $A_v^\circ$ in the GNS-space $L^2(A_v, \varphi_v)$ of $\varphi_v$. We let $\Omega_v \in L^2(A_v, \varphi_v)$ be the unit of $A_v$.  For a reduced word $\boldv = v_1 \cdots v_n$ let
\[
\cH_\boldv = L^2(A_{v_1}^\circ, \varphi_{v_1}) \otimes \cdots \otimes  L^2(A_{v_n}^\circ, \varphi_{v_n} ).
\]
We have the convention $\mathcal{H}_\emptyset := \mathbb{C} \Omega$ where $\Omega$ is a unit vector called the {\bf vacuum vector}.
  If $\boldv = v_1 \cdots v_n$ and $\boldw = w_1 \cdots w_n$ are reduced equivalent (hence shuffle equivalent) words then $\cH_\boldv \simeq \cH_\boldw$ naturally by applying flip maps to the vectors dictated by the shuffle equivalence. More precisely, by \cite[Lemma 1.3]{CaspersFima} if $\boldv$ and $\boldw$ are equivalent reduced words, there exists a unique permutation $\sigma$  of the numbers $1, \ldots, n$ such that $v_{\sigma(i)} = w_i$ and such that if $i < j$ and $v_i = v_j$ then also $\sigma(i) < \sigma(j)$. Then there exists a unitary map $\cQ_{\boldv, \boldw}: \cH_\boldv \rightarrow \cH_\boldw$ which sends $\xi_1 \otimes \cdots \otimes \xi_n$ to   $\xi_{\sigma(1)} \otimes \cdots \otimes \xi_{\sigma(n)}$. From now on, we will omit  $\cQ_{\boldv, \boldw}$ in the notation and identify the spaces $\cH_\boldv$ and $\cH_\boldw$ through $\cQ_{\boldv, \boldw}$.  This  significantly simplifies our notation compared to \cite{CaspersFima} and it is directly verifyable that our constructions below agree with \cite{CaspersFima}.

   Let $I$ be a set of {\bf representatives} of all reduced words modulo shuffle equivalence. Set $\cH = \oplus_{\boldv \in I} \cH_{\boldv}$.  We represent each $A_v, v \in V\Gamma$ on $\cH$ as follows. Take $x \in A_v$.  Take $\xi_1 \otimes \cdots \otimes \xi_d \in \cH_\boldw$ with  $\boldw = w_1 \ldots w_d$ a reduced word. If $\boldw$ does not start with $v$ then we set
\[
x \cdot (\xi_1 \otimes \cdots \otimes \xi_d) = x^\circ \Omega_v \otimes \xi_1 \otimes \cdots \otimes \xi_d + \varphi_v(x)  \xi_1 \otimes \cdots \otimes \xi_d.
\]
If $\boldw$ starts with $v$ we may assume w.l.o.g. (by shuffling the letters if necessary and identifying corresponding Hilbert spaces as described above) that $w_1 = v$ and we set
\[
x \cdot (\xi_1 \otimes \cdots \otimes \xi_d) =
(x  \xi_1 - \langle x  \xi_1, \Omega_v \rangle \Omega_v ) \otimes \xi_2 \otimes \cdots \otimes \xi_d  +  \langle x  \xi_1, \Omega_v \rangle  \xi_2 \otimes \cdots \otimes \xi_d.
\]
This defines a faithful $\ast$-representation of $A_v$ on $\cH$. We set the {\bf (reduced) graph product C$^\ast$-algebra},
\[
(A, \varphi) := \ast_{v,\Gamma} (A_v, \varphi_v)
\]
as the C$^\ast$-algebra generated by all $A_v, v \in V\Gamma$ acting on $\cH$, with  faithful {\bf graph product state}
\[
\varphi(x) :=  \langle x \Omega, \Omega \rangle.
\]
If for every $v \in V\Gamma$, $A_v$ is a von Neumann algebra and $\varphi_v$ is moreover faithful then we define the {\bf graph product von Neumann algebra} as $(\ast_{v,\Gamma} (A_v, \varphi_v))''$. We will usually write
\[
L^2(A, \varphi) := \mathcal{H},
\]
and call this the (graph product) {\bf Fock space}.
The notation is justified as in \cite{CaspersFima} it is shown that $\mathcal{H}$ is the Hilbert space of the standard form of the von Neumann algebraic graph product.
An operator $a_1 \cdots a_n$ with $a_i \in A_{v_i}^\circ$ and $\boldv = v_1 \cdots v_n$ a  reduced word is called a {\bf reduced operator of type $\boldv$}. We refer to $n$ as the {\bf length} of the operator.  We define $P_v$ as the orthogonal projection of $\cH$ onto $\oplus_{\boldv \in I_v} \cH_\boldv$ where $I_v$ are representatives of all words that start with $v$ up to shuffle equivalence.
For $d \in \mathbb{N}_{\geq 0}$, let
\begin{equation}\label{Eqn=WordLengthProj}
\chi_d: A \rightarrow A: a_1 \cdots a_r \mapsto \delta(r = d) a_1 \cdots a_r,
\end{equation}
where $a_1 \cdots a_r$ is a reduced operator. So $\chi_d$ is the word length projection of length $d$.

\subsection{Coxeter groups} A {\bf Coxeter group} $W$ is a group that is freely generated by a set $S$ subject to relations $\left(st\right)^{m_{s,t}}=1$ for exponents $m_{s,t}\in\left\{ 1,2, \ldots, \infty\right\}$  where $m_{s,s}=1$, $m_{s,t}\geq2$ for all $s\neq t$ and $m_{s,t}=m_{t,s}$. The condition $m_{s,t}=\infty$ means that no relation of the form $\left(st\right)^{m}=1, m\in\mathbb{N}$ is imposed. The pair $\left(W,S\right)$ is called a {\bf Coxeter system}. The data of $\left(W,S\right)$ can be encoded in its {\bf Coxeter diagram}\footnote{We emphasize that we refer to this as the {\it diagram} of the Coxeter group, which is different from the graph defined in Section \ref{Sect=GraphCoxeter}.} with vertex set $S$ and edge set $\left\{ \left(s,t\right)\mid m_{s,t}\geq 3\right\}$  where every edge between two vertices $s,t\in S$ is labeled by the corresponding exponent $m_{s,t}$.

A {\bf special subgroup} of $W$ is one that is generated by a subset  $S_0$ of $S$ and which will be denoted by $W_{S_0}$. Special subgroups are also Coxeter groups with the same exponents as $W$ \cite[Theorem 4.1.6]{Da}. We briefly write $W_s = W_{\{ s \}}$ for $s \in S$ and note that by the relation $s^2 = e$ we have $W_s = \mathbb{Z}_2$, the group of two elements.  We call the Coxeter system $\left(W,S\right)$ \emph{irreducible} if its Coxeter diagram is connected. This is equivalent to $W$ not having a non-trivial decomposition into a direct product of special subgroups. Irreducible Coxeter groups fall into three classes.

\begin{definition}

Let $\left(W,S\right)$ be an irreducible Coxeter system.
\begin{itemize}
\item It is of {\bf spherical type} if it is locally finite, i.e. every finitely generated subgroup of $W$ is finite.
\item It is of {\bf affine type} if it is finitely generated, infinite and virtually abelian.
\item It is of {\bf non-affine type} if it is neither spherical nor affine.
\end{itemize}
\end{definition}

An irreducible Coxeter group is non-amenable if and only if it is of non-affine type, see \cite[Theorem 14.1.2 and Proposition 17.2.1]{Da}.

\subsection{Right-angled Coxeter groups as graph products}\label{Sect=GraphCoxeter} A Coxeter group (or Coxeter system) is called {\bf right-angled} if $m_{s,t} \in \{1,2, \infty \}$ for all $s,t \in S$. So we have that for $s \in S, s^2 =e$ and two letters $s,t \in S, s \not = t$ either commute (case $m_{s,t} = 2$) or they are free (case $m_{s,t} = \infty$). Suppose that $(W,S)$ is a right-angled Coxeter system. Set $\Gamma$ as the graph with vertices $V\Gamma = S$ and edge set $E\Gamma = \{ (s,t) \mid m_{s,t} = 2 \}$. Then we find an isomorphism of groups
\begin{equation} \label{Eqn=GraphIso}
W \simeq \ast_{s, \Gamma} W_{s} \simeq \ast_{s, \Gamma} \mathbb{Z}_2,
\end{equation}
sending $s_1 \cdots s_n \in W, s_i \in S$ to $s_1 \cdots s_n \in \ast_{s, \Gamma} W_{s}$ (i.e. our notation coincides). Indeed, this follows from the fact that the defining (universal) properties of a right-angled Coxeter group and a graph product over the special subgroups $W_s \simeq \mathbb{Z}_2$ are the same.

\subsection{Words in Coxeter groups}
Let $\boldw\in W$ with $\boldw=s_{1}\cdots s_{n}$ where $s_{1}, \ldots, s_{n}\in S$. We call this expression {\bf reduced} if it has minimal length, i.e. $n\leq m$ for every other representation $\boldw=t_{1}\cdots t_{m}, t_{1},\ldots,t_{m}\in S$. The set of letters that can occur in a reduced expression is independent of the choice of the reduced expression, \cite[Proposition 4.1.1]{Da}. By $\left|\boldw \right|:=n$ we define a {\bf word-length} on $W$. If $\left|\boldv^{-1} \boldw\right|=\left| \boldw \right|-\left| \boldv \right|$ (resp. $\left|\boldw \boldv^{-1}\right|=\left|\boldw\right|-\left| \boldv\right|$) for $\boldv\in W$ we say that {\bf $\boldw$ starts} (resp. {\bf ends}) {\bf with} $\boldv$. Note that in the right-angled case these definitions agree with the definitions for words and lengths on graphs  under the isomorphism \eqref{Eqn=GraphIso}.

Coxeter groups satisfy three important conditions on words in $S$ of which we will make use implicitly. We use the usual convention that $\widehat{s}$ means that $s$ is removed from an expression.

\begin{theorem}[\cite{Da}, Theorem 3.2.16 and Theorem 3.3.4]

Let $\left(W,S\right)$ be a Coxeter system, $\boldw=s_{1}\cdots s_{n}$ an expression for an element $\boldw\in W$ and $s,t\in S$. Then the following (equivalent) conditions hold:

\begin{itemize}
\item {\bf Deletion condition}: If $s_{1} \cdots s_{n}$ is not a reduced expression for $\boldw$, then there exist $i<j$ such that $s_{1} \cdots \widehat{s_{i}} \cdots \widehat{s_{j}} \cdots s_{n}$ is also an expression for $\boldw$.
\item  {\bf Exchange condition}: If $\boldw=s_{1} \cdots s_{n}$ is reduced, then either $\left|s\boldw\right|=n+1$ or there exists $1\leq i\leq n$ with $s\boldw=s_{1} \cdots \widehat{s_{i}} \cdots s_{n}$.
\item {\bf Folding condition}: If $\left|s\boldw\right|=\left|\boldw\right|+1$ and $\left|\boldw t\right|=\left| \boldw\right|+1$, then either $\left|s \boldw t\right|=\left|\boldw\right|+2$ or $\left|s \boldw t\right|=\left|\boldw \right|$.
\end{itemize}
\end{theorem}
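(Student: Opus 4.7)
The plan is to follow the classical approach via the Tits geometric representation and the associated root system. First, construct the geometric representation of $W$ on the real vector space $V = \mathbb{R}^S$ equipped with the symmetric bilinear form $B(e_s, e_t) = -\cos(\pi/m_{s,t})$, with the convention $B(e_s, e_t) = -1$ when $m_{s,t} = \infty$. Define the reflections $\sigma_s(v) = v - 2 B(e_s, v) e_s$ and verify that the assignment $s \mapsto \sigma_s$ extends to a faithful, $B$-preserving representation of $W$ on $V$. Let $\Phi := W \cdot \{e_s : s \in S\}$ be the associated root system and establish the fundamental sign dichotomy $\Phi = \Phi^+ \sqcup \Phi^-$, where each root has coordinates in the basis $\{e_s\}$ that are either all nonnegative or all nonpositive.

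Next, I would prove the length-inversion formula $|\boldw| = |N(\boldw)|$, where $N(\boldw) := \{\alpha \in \Phi^+ : \boldw^{-1}(\alpha) \in \Phi^-\}$. The proof proceeds by induction on $|\boldw|$, using that $\sigma_s$ sends $e_s$ to $-e_s$ while permuting $\Phi^+ \setminus \{e_s\}$; this translates to the symmetric difference identity $N(\boldw s) \bigtriangleup N(\boldw) = \{\boldw(e_s)\}$. A direct corollary is that $|\boldw s| - |\boldw| = \pm 1$ for every $\boldw \in W$ and $s \in S$. This corollary alone already yields the Folding condition: under the assumptions $|s\boldw| = |\boldw|+1$ and $|\boldw t| = |\boldw|+1$, the quantity $|s\boldw t|$ must differ from $|s\boldw| = |\boldw|+1$ by exactly $\pm 1$, forcing $|s\boldw t| \in \{|\boldw|, |\boldw|+2\}$.

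With the length formula in hand, I would prove the \emph{strong exchange condition}: if $t$ is any reflection in $W$ and $\boldw = s_1 \cdots s_n$ satisfies $|t\boldw| < |\boldw|$, then tracking the sequence of roots $\beta_i := s_n \cdots s_{i+1}(e_{s_i})$ shows that $t = s_1 \cdots s_{i-1} s_i s_{i-1} \cdots s_1$ for some $i$, and consequently $t \boldw = s_1 \cdots \widehat{s_i} \cdots s_n$. The Exchange condition is then the special case of strong exchange with $t = s \in S$ applied to a reduced expression for $\boldw$. The Deletion condition follows by localising: choose the smallest index $j$ for which $s_1 \cdots s_j$ fails to be reduced, so that $|(s_1 \cdots s_{j-1}) s_j| < |s_1 \cdots s_{j-1}|$. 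Applying the Exchange condition on the right to the reduced expression $s_1 \cdots s_{j-1}$ with generator $s_j$ produces an index $i < j$ with $s_1 \cdots s_j = s_1 \cdots \widehat{s_i} \cdots s_{j-1}$, giving the required double deletion $s_1 \cdots s_n = s_1 \cdots \widehat{s_i} \cdots \widehat{s_j} \cdots s_n$.

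The main obstacle, and the technical heart of the argument, is the foundational step: proving that the Tits representation is faithful \emph{and} establishing the sign dichotomy $\Phi = \Phi^+ \sqcup \Phi^-$. These are intertwined and are proved by a simultaneous induction on word length that carefully couples the combinatorics of reduced expressions in $W$ to the geometry of the representation on $V$. Once these are in place, the length formula and the three equivalent conditions amount to combinatorial bookkeeping with the inversion sets $N(\boldw)$.
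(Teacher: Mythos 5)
Your proposal is mathematically correct, but note that the paper does not prove this theorem at all: it is quoted verbatim from Davis's book (\cite{Da}, Theorems 3.2.16 and 3.3.4) as background, so there is no in-paper argument to match. Your route is the classical Bourbaki/Humphreys one: build the Tits representation, establish the sign dichotomy $\Phi = \Phi^+ \sqcup \Phi^-$ and the inversion-set formula $|\mathbf{w}| = |N(\mathbf{w})|$, deduce the (strong) exchange condition by tracking the roots $\beta_i$, get deletion by localising at the first non-reduced prefix and applying right exchange, and get folding from the $\pm 1$ length increment. All of these steps are sound; the only technically delicate point, as you say, is the positivity lemma ($|\mathbf{w}s|>|\mathbf{w}|$ implies $\mathbf{w}(e_s)\in\Phi^+$), which is what the induction really needs --- faithfulness of the representation is a corollary rather than a prerequisite, so the two need not be proved simultaneously. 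Davis's own proof of the cited theorems is different in flavour: it works combinatorially with reflection/prereflection systems on the Cayley graph (walls and galleries) rather than with the geometric representation, and it establishes a slightly stronger folding condition (either $|s\mathbf{w}t|=|\mathbf{w}|+2$ or $s\mathbf{w}t=\mathbf{w}$) than the version quoted here. Also, for the folding condition as stated in the paper you do not need the root system at all: the sign character $W\to\{\pm 1\}$ already forces $|\mathbf{u}s|-|\mathbf{u}|=\pm 1$, from which the statement is immediate. Either approach fully suffices for the purposes of this paper.
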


 In the right-angled case, if we have cancellation of the form $s_{1} \cdots s_{n}=s_{1} \cdots \widehat{s_{i}} \cdots \widehat{s_{j}} \cdots s_{n}$ for $s_{1}, \ldots ,s_{n}\in S$, then $s_{i}=s_{j}$ and $s_{i}$ commutes with $s_{i+1}, \ldots, s_{j-1}$.

%%%%%%%%%%%%%%%%%%%%%%%%%%%%%%%%%%%%%%%%%%%%%%%%%%%%

\subsection{Multi-parameter Hecke algebras} Von Neumann algebraic closures of Hecke algebras were first studied in \cite{Dymara1}, \cite{Dymara2}, see also \cite[Section 19]{Da}.  Note that we use a different normalization of the generators so that our notation coincides with Garncarek's \cite{Gar} (as well as \cite{CaspersAPDE} and \cite{CSW}).

Let $\left(W,S\right)$ be a Coxeter system and  let $\RWS$ (resp. $\CWS$ and $\OneWS$) be the set of tuples $q:=\left(q_{s}\right)_{s\in S}$ in $\mathbb{R}_{>0}^{ S }$ (resp. in $\mathbb{C}^{ S }$  and $\{ -1, 1\}^{ S }$) of positive real numbers (resp. complex numbers and numbers $-1$ or 1) with the property that $q_{s}=q_{t}$ whenever $s$ and $t$ are conjugate in $W$.

Take $q:=\left(q_{s}\right)_{s\in S} \in \RWS$.
 For every reduced expression $\boldw =s_{1}\cdots s_{n}$ of $\boldw\in W$ define
\[
q_{\boldw}:= q_{s_{1}} \cdots q_{s_{n}}, \qquad p_s(q):=q_{s}^{-\frac{1}{2}}\left(q_{s}-1\right).
\]
It follows from \cite[Proposition 19.1.1]{Da}\footnote{If $\widetilde{T}_{\boldw}^{\left(q\right)}$ are the generators from \cite[Proposition 19.1.1]{Da} then take normalized elements $T_{\boldw}^{\left(q\right)}= q_{\boldw}^{-1/2} \widetilde{T}_{\boldw}^{\left(q\right)}$ to get the relations \eqref{Eqn=Rel1} and \eqref{Eqn=Rel2}. This convention is consistent with \cite{CaspersAPDE}, \cite{CSW} and \cite{Gar}.} that there exists a unique $\ast$-algebra $\mathbb{C}_{q}\left[W\right]$ that is spanned  by a linear basis $\{ T_{\boldw}^{\left(q\right)}\mid \boldw \in W\}$ and such that for every $s\in S$ and $\boldw\in W$ we have,
 \begin{equation}\label{Eqn=Rel1}
T_{s}^{\left(q\right)}T_{\boldw}^{\left(q\right)}= \begin{cases}
T_{s\boldw}^{\left(q\right)} & \text{if }\left|s\boldw\right|>\left|\boldw\right|,\\
T_{s\boldw}^{\left(q\right)}+p_s(q)T_{\boldw}^{\left(q\right)} & \text{if }\left|s\boldw\right|<\left|\boldw\right|,
\end{cases}
\end{equation}
and
\begin{equation} \label{Eqn=Rel2}
\left(T_{\boldw}^{\left(q\right)}\right)^{\ast}=T_{\boldw^{-1}}^{\left(q\right)}.
\end{equation}
Denote by $\ell^2(W)$ the Hilbert space of square-summable functions on $W$ with canonical orthonormal basis $\left(\delta_{\mathbf w}\right)_{\mathbf{w}\in W}$.
 For every  $s\in S$ the element $T_{s}^{\left(q\right)}$ acts  boundedly on $\ell^2(W)$ by
\begin{equation}\label{Eqn=TOp}
\begin{array}{ll}
T_{s}^{\left(q\right)}\delta_{\boldw}:= \begin{cases}
\delta_{s\boldw} & \text{if }\left|s \boldw\right|>\left| \boldw\right|\\
\delta_{s\boldw}+p_s(q)\delta_{\boldw} & \text{if }\left|s \boldw\right|<\left| \boldw\right|
\end{cases}
\text{.}
\end{array}
\end{equation}
Moreover, this action extends to a faithful $\ast$-representation $\mathbb{C}_{q}\left[W\right] \rightarrow \cB(\ell^2(W))$.  We will usually identify  $\mathbb{C}_{q}\left[W\right]$ with its image in $\cB(\ell^2(W))$.

The unital $\ast$-algebra $\mathbb{C}_{q}\left[W\right]$  is called the (Iwahori) {\bf Hecke algebra}
 of $\left(W,S\right)$ with parameter $q$.   We set the {\bf reduced Hecke C$^{\ast}$-algebra} $C_{r,q}^{\ast}\left(W\right)$  as the norm closure of $\mathbb{C}_{q}\left[W\right]$ in $\cB\left(\ell^2(W)\right)$ using the representation \eqref{Eqn=TOp}. Then set the {\bf Hecke-von Neumann algebra} ${\mathcal N}_{q}\left(W\right) = C_{r,q}^{\ast}\left(W\right)''$.
  For $q_s=1, s \in S$ we get that $\mathbb{C}_{q}\left[W\right] = \mathbb{C}[W], C_{r,q}^{\ast}\left(W\right) = C_r^\ast(W)$ and ${\mathcal N}_{q}\left(W\right) = \mathcal{L}(W)$ are respectively the group algebra, reduced group C$^\ast$-algebra and group von Neumann algebra of $W$. Hence Hecke algebras are $q$-deformations of group (C$^\ast$- and von Neumann) algebras and the deformation (in principle) depends on $q$; we comment on this further in Section \ref{Sect=Iso}.
For every $q$ the vector state
\[
\tau(x) = \langle x \delta_{e}, \delta_{e} \rangle, \qquad x \in \cB(\ell^2(W)),
\]
  restricts to a {\bf tracial state} $\tau_q$ on $C_{r,q}^{\ast}\left(W\right)$ and ${\mathcal N}_{q}\left(W\right)$ with $\tau_q(T_{w}^{\left(q\right)} )=0$ for all $w\in W \setminus \{ e \}$.  Finally, define $P_{s}$, $s\in S$ to be the projection of $\ell^2(W)$ onto
\begin{equation}\label{Eqn=ProjectionSpace}
\overline{\text{span}\left\{ \delta_{\boldv}\mid \boldv\in W\text{ with }\left|s\boldv\right|<\left|\boldv\right|\right\} }\subseteq \ell^2(W)\text{.}
\end{equation}
Note that if the Coxeter group is right-angled then under the graph product identification \eqref{Eqn=GraphIso} this notation coincides with the notation of Section \ref{Sect=GraphProduct}.

  \subsection{Hecke C$^\ast$-algebras as graph products} Let $(W,S)$ be a right-angled Coxeter system with graph $\Gamma$ as defined in Section \ref{Sect=GraphCoxeter}.
  In \cite[Corollary 3.4]{CaspersAPDE}\footnote{The corollary is only stated for a single parameter $q_s = q$ but it also holds in the multi-parameter case with the same proof.} it is proved that for every $q = (q_{s})_{s \in S} \in \RWS$ there exists an isomorphism,
  \begin{equation}\label{Eqn=GraphHeckeIso}
    (C_{r,q}^{\ast}\left(W\right), \tau_q) \longrightarrow\!\!\!\!\!\!\!^\simeq \:\: \ast_{s, \Gamma}  (C_{r,q_s}^{\ast}\left(W_{s}\right), \tau_{q_s}).
  \end{equation}
  Moreover, the isomorphism is given by the canonical map $T_\boldv^{(q)} \rightarrow T_{s_1}^{(q_{s_1})} \cdots T_{s_d}^{(q_{s_d})}$ where $\boldv = s_1 \cdots s_d$ is reduced. For $q = 1$ this coincides with the corresponding result on Coxeter groups.

\section{A graph product Khintchine inequality} \label{A graph product Khintchine-Haagerup inequality}

The aim of this section is to prove a Khintchine inequality for general graph products. Such a Khintchine inequality estimates the operator norm of a reduced operator of length $d$   with the norm of certain Haagerup tensor products of column and row Hilbert spaces. This estimate of norms holds up to a bound that is polynomial in $d$. We make this more precise in the current section.

 \begin{rmk}
In the context of free products a Khintchine inequality was already obtained by Ricard and Xu in \cite[Section 2]{RicardXu}. If in the current paper we would only treat the case of free Coxeter systems and their Hecke deformations (i.e. $m(s,t) = \infty, s\not = t$) the results from \cite{RicardXu} would be sufficient. Here however, we want a more general theorem. One of the problems that arises while proving such a theorem is that the  analogue of \cite[Lemma 2.3]{RicardXu} in its form fails in a general graph product setting.
We remedy this problem by using maps which intertwine graph products with free products.
 \end{rmk}

Let us now prepare for the proof of the main theorem of this section. We fix   notation for both a graph product and a free product. As before, let $\Gamma$ be a finite simplicial graph  and let $I$ be a set of representatives of equivalence classes of reduced words with letters in  $V\Gamma$. Let $A_v, v \in V\Gamma$ be  unital C$^\ast$-algebras with GNS-faithful states $\varphi_v$.  Let
\[
(A, \varphi) := \ast_{v, \Gamma} (A_v, \varphi_v)
\]
 be its graph product with vacuum vector $\Omega$. We also set the free product (i.e. the graph product over $\Gamma$ with all edges removed)
 \[
 (A_f, \varphi_f) := \ast_{v} (A_v, \varphi_v),
 \]
 with vacuum vector $\Omega_f$.
 We shall view $A_v, v \in V\Gamma$ as a C$^\ast$-subalgebra of $A$ and $A_f$. We let $P_v$ be the graph product projection onto words that start with $v$ (as before) and we let $P_v^f$ be the free product projection onto words which start with $v$. Let $\Gamma_0 \in \Cliq(\Gamma, l)$ and let $\boldw = w_1 \cdots w_l \in I$ be the word consisting of all letters in $V\Gamma_0$. We write
 \begin{equation}\label{Eqn=PGammaF}
 P_{\Gamma_0}^f
 \end{equation}
  for the projection of $L^2(A_f, \varphi_f)$ onto the direct sum of all $\cH_{  \boldv}$ where $\boldv$ starts with $\boldw$.
 Recall that $A_v^\circ$ is the set of $a \in A_v$ with $\varphi_v(a) = 0$.

  View $A_v$ as a subalgebra of $A$ and then let $\Sigma_1 := {\rm span} \{ A_v^\circ \mid v \in V \Gamma\}$, and, for $d\in \mathbb{N}_{\geq 1}$,
   \[
   \Sigma_d := \{ a_1 \otimes \cdots \otimes a_d \mid a_i \in A_{v_i}^\circ \textrm{ and } v_1\cdots v_d \textrm{ reduced}  \} \subseteq \Sigma_1^{\otimes d},
\]
where the latter is the $d$-fold algebraic tensor product.

Our first aim is to show that reduced operators in $A$ of length $d$ can be decomposed as sums of creation operators, annihilation operators and diagonal operators in a sense to be made precise below. Crucial is that first the annihilation operators act, then the diagonal operators and then the creation operators. That is, we shall be looking for an analogue of the decomposition \cite[Fact 2.6]{RicardXu}.

\begin{rmk}\label{Rmk=Informal}
Definition \ref{Dfn=Sigma} below formally defines the following permutation.
Let $\boldv = v_1 \cdots v_d$ for $v_i \in V\Gamma$, be a reduced word. Let $d \in \mathbb{N}_{\geq 1}$, $0 \leq l \leq d$, $0 \leq k \leq d-l$, $\Gamma_0 \in \Cliq(\Gamma, l)$ and $(\Gamma_1, \Gamma_2) \in \Comm(\Gamma_0)$.  Then, if possible, we permute the letters of $\boldv$ through shuffle equivalence in the form:
\begin{equation}\label{Eqn=Form}
  (v_{\sigma(1)} \cdots v_{\sigma(k)} )  (v_{\sigma(k+1)} \cdots v_{\sigma(k+l)} ) (v_{\sigma(k+l+1)} \cdots   v_{\sigma(d)}) \simeq \overbrace{(\ast \cdots \ast V\Gamma_1)}^{k \textrm{  letters}} \overbrace{( \: V\Gamma_0 \:)}^{l \textrm{ letters}} \overbrace{(V \Gamma_2 \diamond \cdots \diamond)}^{d-l-k \textrm{ letters}},
\end{equation}
 where $\ast$ and $\diamond$ are the remaining letters and each of the 3 respective terms in between brackets are shuffle equivalent themselves. This means that in between the first brackets there is a word of length $k$ that ends on the clique $V\Gamma_1$, in between the second brackets there is the clique of length $l$ given by $V\Gamma_0$, and at the end there is a word of length $d - k - l$ that starts with the clique $V\Gamma_2$.

 Moreover, we want that $\ast \cdots \ast$ does not end on letters commuting with $V\Gamma_0$ and $V\Gamma_1$ and $\diamond \cdots \diamond$ does not start with letters commuting with  $V\Gamma_0$ and $V\Gamma_2$. This means that the cliques $\Gamma_1$ and $\Gamma_2$ are maximal for the property that a decomposition like \eqref{Eqn=Form} exists.

 If moreover we demand that $v_{\sigma(1)} \cdots v_{\sigma(k)}$,   $v_{\sigma(k+1)} \cdots v_{\sigma(k+l)}$ and  $v_{\sigma(k+l+1)} \cdots   v_{\sigma(d)}$ are in $I$, then there can be at most one such permutation coming from shuffle equivalences.

 Of course not for every $d, l, k, \Gamma_0, \Gamma_1, \Gamma_2$ and $\boldv$ this permutation exists since $\boldv$ cannot always be written in the from \eqref{Eqn=Form}.
\end{rmk}

%We now formally define $\sigma$ as in Remark \ref{Rmk=Informal}.

\begin{dfn}\label{Dfn=Sigma}
Let $d \in \mathbb{N}_{\geq 1}$.
Suppose that $0 \leq l \leq d$, $0 \leq k \leq d-l$, $\Gamma_0 \in \Cliq(\Gamma, l)$ and $(\Gamma_1, \Gamma_2) \in \Comm(\Gamma_0)$. So if $l = 0$ we have that $\Gamma_0$ is the empty clique, and Condition \eqref{Item=Sigma2} below vanishes.
Take a reduced word $\boldv = v_1 \cdots v_d, v_i \in V\Gamma$. If existent,  define  $\sigma (= \sigma^\boldv_{l,k,\Gamma_0, \Gamma_1, \Gamma_2})$ as the permutation of indices $1, \ldots, d$   that satisfies:
 \begin{enumerate}
 \item\label{Item=Sigma1} $v_{1} \cdots v_d =  v_{\sigma(1)} \cdots  v_{\sigma(d)}$;
 \item\label{Item=Sigma2} $\{ v_{\sigma(k+1)}, \ldots, v_{\sigma(k+l)} \} = V\Gamma_0$;
 \item\label{Item=Sigma3} $\vert v_{\sigma(1)} \cdots v_{\sigma(k)} s\vert = k - 1$ whenever $s \in V\Gamma_1$;
 \item\label{Item=Sigma4} $\vert v_{\sigma(1)} \cdots v_{\sigma(k)} s\vert = k + 1$ whenever $s \in \Link(\Gamma_0) \backslash V \Gamma_1$;
 \item\label{Item=Sigma5} $\vert s v_{\sigma(k+l+1)} \cdots v_{\sigma(d)} \vert = d-k-l-1$ whenever $s \in V\Gamma_2$;
 \item\label{Item=Sigma6} $\vert s v_{\sigma(k+l+1)} \cdots v_{\sigma(d)} \vert = d-k-l + 1$ whenever $s \in \Link(\Gamma_0) \backslash V \Gamma_2$.
\end{enumerate}
We shall assume moreover that $v_{\sigma(1)} \cdots v_{\sigma(k)}$, $v_{\sigma(k+1)} \cdots v_{\sigma(k+l)}$ and $v_{\sigma(k+l+1)} \cdots v_{\sigma(d)}$ are in   $I$ (i.e. they are the representatives of their equivalence class) and if $v_{i} = v_{j}, i < j$ then $\sigma(i) < \sigma(j)$ so that  $\sigma$ comes from a shuffle equivalence. Then $\sigma$ is unique if it exists.
\end{dfn}

The permutation $\sigma$ of Definition \ref{Dfn=Sigma} does not necessarily exist. All expressions below in which a non-existing $\sigma$ occurs need to be interpreted as 0 and we shall recall this at the relevant  places.

\begin{example}
Consider the following graph:

\begin{center}
\begin{tikzpicture}[transform shape]
  %the multiplication with floats is not possible. Thus I split the loop in two.

%  \foreach \number in {1,...,6}{
      % Computer angle:
 %       \mycount=\number
 %       \advance\mycount by -1
 % \multiply\mycount by 20
 %       \advance\mycount by 45
 %     \node[draw,circle,inner sep=0.25cm] (N-\number) at (\the\mycount:5.4cm) {};
 %   }

      \node[draw,circle,inner sep=0.25cm] (N-1) at (180:3.4cm) {a};
      \node[draw,circle,inner sep=0.25cm] (N-2) at (144:3.4cm) {b};
      \node[draw,circle,inner sep=0.25cm] (N-3) at (108:3.4cm) {c};
      \node[draw,circle,inner sep=0.25cm] (N-4) at (72:3.4cm) {d};
      \node[draw,circle,inner sep=0.25cm] (N-5) at (36:3.4cm) {e};
      \node[draw,circle,inner sep=0.25cm] (N-6) at (0:3.4cm) {f};

  %\foreach \number in {4,...,6}{
      % Computer angle:
  %      \mycount=\number
  %      \advance\mycount by -1
  %\multiply\mycount by 20
  %      \advance\mycount by 22.5
  %    \node[draw,circle,inner sep=0.25cm] (N-\number) at (\the\mycount:5.4cm) {};
  %  }

  \foreach \number in {1,...,5}{
        \mycount=\number
        \advance\mycount by 1
  \foreach \numbera in {\the\mycount,...,5}{
    \path (N-\number) edge[-,bend right=3] (N-\numbera) ;
  }

    \path (N-4) edge[-,bend right=3] (N-6) ;

}
\end{tikzpicture}
\end{center}

\noindent This is the complete graph $K_5$ consisting of vertices $a, b, c, d, e$ together with an extra vertex $f$ that is connected only to $d$ and $e$.  Say that a word is in $I$ (i.e. is a representative) if it is minimal in alphabetical order amongst all equivalent words.  Now suppose that we have a reduced word:
\[
abcdef.
\]
\begin{itemize}
\item {\it Example 1.} Take $\Gamma_0 = \{ a, b, c \}$,   $\Gamma_1 = \{ d, e\}$ and  $\Gamma_2 = \emptyset$. Set $l = 3, k=2$.
Then $\sigma$ as in Definition \ref{Dfn=Sigma} exists and it is the permutation moving the word $abcdef$ to
\[
(de) (abc) f
\]
 (since every letter occurs uniquely it is clear what the permutation is). Indeed: $abc$ forms a clique; $de$ ends on $\Gamma_1$ and there is no other letter commuting with $\Gamma_1$ at the end of $de$; $f$ has no letters commuting with $abc$ at the start.
\begin{comment}
\item {\it Example 2.} Take $\Gamma_0 = \{ a, b \}$,   $\Gamma_1 = \{ d, e\}$ and  $\Gamma_2 = \{ c\}$. Set $l = 2, k=2$. Then again $\sigma$ as in Definition \ref{Dfn=Sigma} exists and it is the permutation moving the word \eqref{Eqn=Word} to
\[
(de) (ab) (c f).
\]
So the permutation is the same as in the previous example; we leave verification to the reader.
\end{comment}
\item {\it Example 2.} Take $\Gamma_0 = \{ a, b \}$,   $\Gamma_1 = \{ d \}$ and  $\Gamma_2 = \{ c\}$. Set $l = 2, k=2$. Then $\sigma$ as in Definition \ref{Dfn=Sigma} does not exist. Indeed, by the choice of $k,l$ and $\Gamma_0$ if $\sigma$ exists there must be a word equivalent to $abcdef$ of the form ($\ast$ and $\diamond$ being undetermined letters):
\[
\ast \ast (ab) \diamond \diamond.
\]
By the choice of $\Gamma_1$ we see that  $\ast \ast$ ends on $d$ and there are no other letters in $\Link(\Gamma_0)$ at the end of $\ast \ast$. So we must have
\[
\ast d (ab) \diamond \diamond.
\]
However, there is no choice for the letter $\ast$ ($e$ is not allowed as it is in $\Link(\Gamma_0)$ and $f$ cannot be moved past $ab$).

 \end{itemize}
\end{example}

Now we find the following decomposition.

\begin{lem}\label{Lem=QaQDec}
Let $a_1 \cdots a_d$ be a reduced operator  of type $\boldv = v_1 \cdots v_d$  in the graph product $(A, \varphi)$.  Suppose that we have a non-zero expression of the form
\begin{equation}\label{Eqn=QExpression}
Q_{v_1}^{(1)} a_{1} Q_{v_1}^{(2)} \cdots Q_{v_d}^{(1)} a_{d} Q_{v_d}^{(2)},
\end{equation}
in $\cB(L^2(A, \varphi))$, where $Q_{v_i}^{(j)}$ equals either $P_{v_i}$ or $P_{v_i}^\perp$. Then, for some permutation $\alpha$ of the indices $1,\ldots, d$ coming from a shuffle equivalence and for some $0 \leq r \leq d$ and $0 \leq m \leq d-r$ we have that  \eqref{Eqn=QExpression} equals
\begin{equation}\label{Eqn=ShuffleQaQ}
\begin{split}
&  (P_{v_{ \alpha(1) } } a_{ \alpha(1) } P_{v_{ \alpha(1) }}^\perp )  \cdots   ( P_{v_{\alpha(r)  }} a_{\alpha(r)} P_{  v_{ \alpha(r) } }^\perp )  (P_{ v_{\alpha(r+1) } }   a_{ \alpha(r+1) }  P_{v_{\alpha(r+1) } }  )\cdots (P_{v_{\alpha(m)}}  a_{\alpha(m)} P_{v_{\alpha(m)}}  )\\
 & \quad \times \quad ( P_{v_{\alpha(m+1)} }^\perp a_{ \alpha(m+1) } P_{v_{ \alpha(m+1) } }) \cdots ( P_{v_{\alpha(d) } }^\perp a_{ \alpha(d)  } P_{ v_{\alpha(d) } }).
\end{split}
\end{equation}
Further, for a non-zero expression of the form \eqref{Eqn=ShuffleQaQ} we have that $v_{\alpha(r+1)} \cdots v_{\alpha(m)}$ is in a clique.
\end{lem}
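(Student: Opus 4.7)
The plan is to interpret \eqref{Eqn=QExpression} as a product of elementary blocks $F_i := Q_{v_i}^{(1)} a_i Q_{v_i}^{(2)}$. Since $a_i \in A_{v_i}^\circ$ sends $P_{v_i}^\perp \cH$ into $P_{v_i} \cH$, the block $P_{v_i}^\perp a_i P_{v_i}^\perp$ vanishes; hence the non-vanishing hypothesis on \eqref{Eqn=QExpression} forces each $F_i$ to be of one of three types, labelled $C := P_{v_i} a_i P_{v_i}^\perp$ (creation), $D := P_{v_i} a_i P_{v_i}$ (diagonal), or $A := P_{v_i}^\perp a_i P_{v_i}$ (annihilation). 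The target form \eqref{Eqn=ShuffleQaQ} is then precisely the statement that, after a shuffle permutation $\alpha$ of the indices, the blocks appear in the sorted order $C, \dots, C, D, \dots, D, A, \dots, A$ with multiplicities $r$, $m-r$, $d-m$.

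I would produce $\alpha$ by bubble-sorting with respect to the ordering $C < D < A$. The key observation is this: whenever $F_i$ has strictly higher type than $F_{i+1}$ (i.e.\ in the three out-of-order pairings $(A,C)$, $(D,C)$, $(A,D)$), the right projection of $F_i$ is $P_{v_i}$ and the left projection of $F_{i+1}$ is $P_{v_{i+1}}$, so the factor $P_{v_i} P_{v_{i+1}}$ appears in the middle of $F_i F_{i+1}$. Now $P_{v_i} P_{v_{i+1}} = 0$ unless $v_i = v_{i+1}$ or $(v_i,v_{i+1}) \in E\Gamma$, since two distinct vertices can simultaneously start a reduced word only if they are joined by an edge in $\Gamma$. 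Reducedness of $\boldv$ (preserved under shuffle equivalence) forbids $v_i = v_{i+1}$ at adjacent positions, so non-vanishing of the full product forces $(v_i,v_{i+1}) \in E\Gamma$. When $(v_i,v_{i+1}) \in E\Gamma$, the graph product relations make $a_i$ and $a_{i+1}$ commute, and $P_{v_i}$ commutes with both $P_{v_{i+1}}$ and the entire action of $A_{v_{i+1}}$, because creating, preserving, or removing a $v_{i+1}$-letter leaves a leading $v_i$ (which commutes past $v_{i+1}$) undisturbed. Consequently $F_i F_{i+1} = F_{i+1} F_i$, and the swap is simultaneously an operator identity and a valid shuffle move on $\boldv$.

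Iterating this swap finitely many times sorts the types, yielding a shuffle permutation $\alpha$ for which \eqref{Eqn=QExpression} equals an expression of the form \eqref{Eqn=ShuffleQaQ}. For the clique assertion, assume a non-zero expression of the form \eqref{Eqn=ShuffleQaQ} and isolate the diagonal subproduct $F_{\alpha(r+1)} \cdots F_{\alpha(m)}$. Each $D$-factor preserves the subspace of vectors supported on reduced words starting with its letter, so for the subproduct to be non-zero some input must be supported on reduced words starting simultaneously with every $v_{\alpha(r+1)}, \dots, v_{\alpha(m)}$. The standard fact that the set of starting letters of a reduced word in a graph product is a clique then gives that $v_{\alpha(r+1)}, \dots, v_{\alpha(m)}$ form a clique in $\Gamma$.

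The main obstacle is mostly bookkeeping: one has to verify that the three out-of-order pairings are exactly those producing the $P_{v_i} P_{v_{i+1}}$ factor in the middle, and to check carefully the commutation of $F_i$ with $F_{i+1}$ in terms of the graph product action on $L^2(A,\varphi)$. Once those elementary checks are in place, the bubble-sort completion and the clique deduction are direct combinatorial consequences.
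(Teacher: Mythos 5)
Your proof is correct and rests on essentially the same mechanism as the paper's: the paper sorts the factors into creation--diagonal--annihilation order via a maximal-tail argument (its Claims 1--3) rather than by adjacent transpositions, but in both cases the engine is that a non-vanishing out-of-order pair produces a middle factor $P_{v_i}P_{v_{i+1}}$, which by reducedness of the (shuffled) word forces $(v_i,v_{i+1})\in E\Gamma$ and hence permits the swap. One precision for your clique step: you should use that each diagonal factor $P_v a P_v$ maps every summand $\cH_{\boldw}$ of the Fock space into itself (vanishing unless $\boldw$ starts with $v$), not merely that it preserves the range of $P_v$ --- only with this stronger statement does the requirement of starting with each diagonal letter accumulate on a single word type $\boldw$, which is what yields the clique.
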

\begin{proof}
We prove this in a series of claims. To avoid cumbersome notation we shall not write the permutation of the shuffle equivalences in the proof.

\vspace{0.3cm}

\noindent {\bf Claim 1.} The expression \eqref{Eqn=QExpression} is up to shuffle equivalence equal to:
\begin{equation}\label{Eqn=QExpression1}
Q_{v_1}^{(1)} a_{1} Q_{v_1}^{(2)} \cdots  Q_{v_m}^{(1)} a_m Q_{v_m}^{(2)} (P_{v_{m+1} }^\perp  a_{m+1} P_{v_{m+1} }) \cdots (P_{v_{d} }^\perp a_{d} P_{v_{d} }).
\end{equation}
Moreover, the tail of annihilation operators is maximal in the sense that if for some $i\leq m$ we have $Q_{v_i}^{(2)} = P_{v_i}$ then  $Q_{v_i}^{(1)} = P_{v_i}$.

\vspace{0.3cm}

\noindent {\it Proof of Claim 1.}  Suppose that we are given an expression as in \eqref{Eqn=QExpression1}. Suppose that for some $i<m$ we have $Q_{v_i}^{(1)} = P_{v_i}^\perp, Q_{v_i}^{(2)} = P_{v_i}$.   Then we need to show that $v_i$ commutes with $v_{i+1}\cdots v_{m}$. To do so we may suppose the index $i$ was chosen maximally. Suppose that $v_i$ and $v_{i+1}\cdots v_{m}$ do not commute and let $v_k$ be the first letter in $v_{i+1}\cdots v_{m}$ that does not commute with $v_i$. Our choice of $i$ yields that $Q_{v_k}^{(1)} = P_{v_k}$. Indeed if $Q_{v_k}^{(1)}$ were to be $P_{v_k}^\perp$ then  \eqref{Eqn=QExpression1} is 0 in case $Q_{v_k}^{(2)} = P_{v_k}^\perp$ and  in case $Q_{v_k}^{(2)} = P_{v_k}$ this would contradict maximality of $i$. But then \eqref{Eqn=QExpression1} contains a factor $P_{v_i} P_{v_k} = 0$ which means that \eqref{Eqn=QExpression1} would be zero which in turn is a contradiction.

\vspace{0.3cm}

\noindent {\bf Claim 2.} The expression \eqref{Eqn=QExpression} is up to shuffle equivalence equal to:
\begin{equation}\label{Eqn=QExpression21}
\begin{split}
& Q_{v_1}^{(1)} a_{1} Q_{v_1}^{(2)} \cdots  Q_{v_r}^{(1)} a_{r} Q_{v_r}^{(2)}  (P_{v_{r+1}}  a_{r+1} P_{v_{r+1}}  )\cdots (P_{v_m}  a_{m} P_{v_m}  )\\
 & \quad \times \quad (P_{v_{m+1} }^\perp a_{m+1} P_{v_{m+1} }) \cdots ( P_{v_{d} }^\perp a_{d} P_{v_{d} }).
\end{split}
\end{equation}
Moreover, the tail of annihilation and diagonal operators is maximal in the sense that if for some $i\leq r$ we have $Q_{v_i}^{(1)} = P_{v_i}$ then  $Q_{v_i}^{(2)} = P_{v_i}^\perp$.

\vspace{0.3cm}

\noindent {\it Proof of Claim 2.}  Suppose that we are given a non-zero expression as in \eqref{Eqn=QExpression21}. Suppose that for some $i<r$ we have $Q_{v_i}^{(1)} = P_{v_i}, Q_{v_i}^{(2)} = P_{v_i}$.   Then we need to show that $v_i$ commutes with $v_{i+1}\cdots v_{r}$. To do so we may suppose the index $i < r$ was chosen maximally. Suppose that $v_i$ and $v_{i+1}\cdots v_{r}$ do not commute and let $v_k$ be the first letter in $v_{i+1}\cdots v_{r}$ that does not commute with $v_i$. We claim that our choice of $i$ yields that $Q_{v_k}^{(2)} = P_{v_k}^\perp$. Indeed, suppose that $Q_{v_k}^{(2)} = P_{v_k}$. Then if $Q_{v_k}^{(1)} = P_{v_k}$ this contradicts maximality of $i$  and if $Q_{v_k}^{(1)} = P_{v_k}^\perp$ it would contradict Claim 1. From $Q_{v_k}^{(2)} = P_{v_k}^\perp$ we find that $Q_{v_k}^{(1)} = P_{v_k}$  since if $Q_{v_k}^{(1)} = P_{v_k}^\perp$ then $P_{v_k}^\perp a_k P_{v_k}^\perp = 0$.   But then \eqref{Eqn=QExpression21} contains the factor $P_{v_i} P_{v_k} = 0$ with $v_i$ and $v_k$ non-commuting,  which means that \eqref{Eqn=QExpression21} would be zero. As this is a contradiction the claim follows.

\vspace{0.3cm}

\noindent {\bf Claim 3.} The expression \eqref{Eqn=QExpression} is up to shuffle equivalence equal to:
\begin{equation}\label{Eqn=QExpression22}
\begin{split}
&  (P_{v_{1} } a_{1} P_{v_1}^\perp )  \cdots   (P_{v_{r} } a_{r} P_{v_r}^\perp )  (P_{v_{r+1}}  a_{ r+1 } P_{v_{r+1}}  )\cdots (P_{v_m}  a_{m} P_{v_m}  )\\
 & \quad \times \quad ( P_{v_{m+1} }^\perp a_{ m+1 } P_{v_{m+1} }) \cdots ( P_{v_{d} }^\perp a_{ d } P_{v_{d} }).
\end{split}
\end{equation}
Moreover $v_{r+1} \cdots v_m$ forms a clique.

\vspace{0.3cm}

\noindent {\it Proof of Claim 3.} This is obvious now from Claim 2 and the fact that $P_{v_{i} }^\perp a_{v_i} P_{v_i}^\perp = 0$. As $P_{v_i} P_{v_j}$ is non-zero only if $v_i$ and $v_j$ commute we must have that $v_{r+1} \cdots v_m$ forms a clique.

\vspace{0.3cm}

\noindent {\it Proof of the lemma.} We may now directly conclude the lemma from Claim 3. The permutation $\alpha$ is then the composition of the shuffle equivalences coming from claims 1, 2 and 3.

\end{proof}

\begin{prop}\label{Prop=TProjectionDec}
Let $a_1 \cdots a_d$ be a reduced operator  of type $\boldv = v_1 \cdots v_d$  in the graph product $(A, \varphi)$.
We have the following equality of  operators in  $\cB(L^2(A, \varphi))$
\begin{equation}\label{Eqn=TProjectionDec}
\begin{split}
a_1  \cdots  a_d = & \sum_{l=0}^{d} \sum_{k=0}^{d-l} \sum_{\Gamma_0 \in \Cliq(\Gamma, l)} \sum_{(\Gamma_1, \Gamma_2) \in \Comm(\Gamma_0)}
 ( P_{v_{\sigma(1)}} a_{ \sigma(1) }  P_{v_{\sigma(1)}}^\perp)   \cdots   (P_{v_{\sigma(k)}} a_{\sigma(k)} P_{v_{\sigma(k)}}^\perp)  \\
 & \times  (P_{v_{\sigma(k+1) } }   a_{ \sigma(k+1) }  P_{v_{\sigma(k+1) } } ) \cdots   (  P_{v_{\sigma(k+l)}}  a_{\sigma(k+l)}  P_{v_{\sigma(k+l)}}) \\
& \times  (P_{v_{\sigma(k+l+1) } }^\perp a_{ \sigma(k+l+1) }  P_{v_{\sigma(k+l+1) } } ) \cdots   (  P_{v_{\sigma(d)}}^\perp  a_{\sigma(d)}  P_{v_{\sigma(d)}}),
 \end{split}
\end{equation}
where $\sigma$ (changing over the summation) is as in Definition \ref{Dfn=Sigma}. If such $\sigma$ does not exist then the summand is understood as 0.
\end{prop}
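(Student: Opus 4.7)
The plan is to decompose each $a_i$ as
\[
a_i = P_{v_i} a_i P_{v_i}^\perp + P_{v_i} a_i P_{v_i} + P_{v_i}^\perp a_i P_{v_i},
\]
which is valid because $P_{v_i}^\perp a_i P_{v_i}^\perp = 0$: for $a_i \in A_{v_i}^\circ$ and any vector $\xi$ not starting with $v_i$, the definition of the graph product action gives $a_i \xi = a_i \Omega_{v_i} \otimes \xi$, and this starts with $v_i$. Substituting this three-term identity into $a_1 \cdots a_d$ expands the product as a sum of at most $3^d$ expressions of the form \eqref{Eqn=QExpression}, each with $Q_{v_i}^{(j)} \in \{P_{v_i}, P_{v_i}^\perp\}$ and $(Q_{v_i}^{(1)}, Q_{v_i}^{(2)}) \neq (P_{v_i}^\perp, P_{v_i}^\perp)$.

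Next, I will apply Lemma \ref{Lem=QaQDec} to each non-vanishing summand, putting it in the canonical form of \eqref{Eqn=ShuffleQaQ}: a block of $k:=r$ creation operators $(PaP^\perp)$, a block of $l:=m-r$ diagonal operators $(PaP)$ whose letters form a clique $\Gamma_0$ (by the last sentence of the lemma), and a block of $d-k-l$ annihilation operators $(P^\perp a P)$, related to the original expression by a permutation $\alpha$ coming from a shuffle equivalence. I then take $V\Gamma_1$ to consist of those $s \in \Link(\Gamma_0)$ satisfying $|v_{\alpha(1)} \cdots v_{\alpha(k)} s| = k-1$, and define $V\Gamma_2$ symmetrically at the beginning of the annihilation block. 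A routine application of the exchange condition shows that $\Gamma_1$ and $\Gamma_2$ are cliques in $\Link(\Gamma_0)$, and disjointness $\Gamma_1 \cap \Gamma_2 = \emptyset$ follows from reducedness of $\boldv$ — two equal letters separated only by letters commuting with both would contradict that $\boldv$ is reduced. With these choices, conditions (1)--(6) of Definition \ref{Dfn=Sigma} hold by construction, and hence $\alpha = \sigma^{\boldv}_{l,k,\Gamma_0,\Gamma_1,\Gamma_2}$ by the uniqueness clause of that definition.

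To finish, I will establish that this correspondence, from non-vanishing $3^d$-summands to tuples $(l, k, \Gamma_0, \Gamma_1, \Gamma_2)$ with existing $\sigma$, is a bijection onto the non-vanishing summands on the right-hand side of \eqref{Eqn=TProjectionDec}. The inverse is clear: given such a tuple, the canonical product is itself of the form \eqref{Eqn=ShuffleQaQ}, and permuting back by $\sigma^{-1}$ recovers the unique original choice $(Q_{v_i}^{(1)}, Q_{v_i}^{(2)})_{i=1}^d$ (creation, diagonal, or annihilation type according to the block). The main obstacle will be verifying that the maximality clauses proved in Claims 1 and 2 inside Lemma \ref{Lem=QaQDec} — namely, maximality of the annihilation tail and of the annihilation-plus-diagonal tail — are precisely encoded by conditions (4) and (6) of Definition \ref{Dfn=Sigma}. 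This is what guarantees that no two distinct non-vanishing summands in the $3^d$-expansion shuffle to the same canonical form and, conversely, that every non-vanishing summand on the right of \eqref{Eqn=TProjectionDec} is produced exactly once.
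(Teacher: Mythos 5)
Your proposal is correct and follows essentially the same route as the paper: expand each $a_i$ via the projections $P_{v_i}, P_{v_i}^\perp$, normalize each non-vanishing term with Lemma \ref{Lem=QaQDec}, and match the resulting canonical forms bijectively with the tuples $(l,k,\Gamma_0,\Gamma_1,\Gamma_2)$ via the maximality of $\Gamma_1,\Gamma_2$. The only (cosmetic) difference is that you discard the $P_{v_i}^\perp a_i P_{v_i}^\perp$ term at the outset using $P_{v_i}^\perp a_i P_{v_i}^\perp=0$, whereas the paper expands all four terms and lets that vanishing occur inside the lemma.
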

\begin{proof}
We first note that we may decompose,
\begin{equation}\label{Eqn=ProjectionDecomposition}
a_1  \cdots  a_d  =
(P_{v_1} + P_{v_1}^\perp) a_{1}  (P_{v_1} + P_{v_1}^\perp) \cdots  (P_{v_d} + P_{v_d}^\perp) a_{d} (P_{v_d} + P_{v_d}^\perp).
\end{equation}
What we showed in Lemma \ref{Lem=QaQDec} and  \eqref{Eqn=ProjectionDecomposition} is that the product $a_1 \cdots a_d$ decomposes as a sum of operators of the form \eqref{Eqn=ShuffleQaQ} (where $\alpha$ depends on the summand). The proof is finished by arguing that the
 summation in \eqref{Eqn=TProjectionDec} runs exactly over all these summands.

  It is clear that each (non-zero) summand in \eqref{Eqn=TProjectionDec} is an expression of the form \eqref{Eqn=ShuffleQaQ} with $r=k$ and $l = m - r$. Conversely, take an expression of the form \eqref{Eqn=ShuffleQaQ}, then the letters $v_{\alpha(r+1)} \cdots v_{\alpha(m)}$ form a clique $\Gamma_0$ by Lemma \ref{Lem=QaQDec}. Set $l = \# V\Gamma_0$ and $k = r$.  Now in $v_{\alpha(1)}, \ldots, v_{\alpha(r)}$ there may be letters at the end that commute with $\Gamma_0$. Let $\Gamma_1$ be the clique of letters that appear at the end of $v_{\alpha(1)}, \ldots, v_{\alpha(r)}$ that commute with $\Gamma_0$ that is maximal in the following sense: there are no letters $s$  appearing at the end of $v_{\alpha(1)} \cdots v_{\alpha(r)}$ that commute with $\Gamma_0$ and $\Gamma_1$. Such a clique is unique since if both $\Gamma_1$ and $\Gamma_1'$ would be such cliques, then so is $\Gamma_1 \cup \Gamma_1'$ and hence $\Gamma_1 = \Gamma_1'$ by maximality. Similarly we may let $\Gamma_2$ be a clique of letters that appear at the start of $v_{\alpha(m+1)} \cdots v_{\alpha(d)}$ that commute with $\Gamma_0$ and that is maximal. Then for this choice of $\Gamma_0, \Gamma_1, \Gamma_2$ we have that  $\sigma$ satisfying \eqref{Item=Sigma1} - \eqref{Item=Sigma6} exists and it is moreover the only choice for which it exists. This shows that each non-zero expression  \eqref{Eqn=ShuffleQaQ} occurs exactly once in the summation \eqref{Eqn=TProjectionDec}.
\end{proof}

In order  to prove our Khintchine inequality we introduce the necessary notation.
 We define as in \cite[Section 2]{RicardXu} the following subspaces of $\cB(L^2(A_f, \varphi_f))$,
 \[
    L_1 = {\rm span} \{   P_v^f a_v P_v^{f\:\perp} \mid v \in V\Gamma, a \in A_v \}, \qquad K_1 = L_1^\ast.
 \]
 It is proved in \cite[Lemma 2.3]{RicardXu} that
 \begin{equation}\label{Eqn=Column}
 L_1 \simeq \left( \oplus_{v \in V\Gamma}  L^2(A_v^\circ, \varphi_v) \right)_{ \rmC }, \qquad  K_1 \simeq \left( \oplus_{v \in V\Gamma}  L^2(A_v^\circ, \varphi_v) \right)_{\rmR}
  \end{equation}
  completely isometrically, where the subscript $\rmC$ (resp. $\rmR$) denotes the column (resp. row) Hilbert space structure. In particular, if each $A_v^\circ$ is one dimensional (as is the case for right-angled Hecke algebras) we have that $L_1$ (resp. $K_1$) is completely isometrically isomorphic to the column Hilbert space  $\rmC_{\# V\Gamma}$ (resp. row Hilbert space $\rmR_{\# V \Gamma}$).  We set $k$-fold Haagerup tensor products,
 \[
 L_k = L_1^{\hotimes k}, \qquad K_k = K_1^{\hotimes k}.
 \]

Fix $\Gamma_0 \in \Cliq(l)$. For $a_1, \ldots,  a_l$ with $a_i \in A_{v_i}$ and  $\boldv = v_1 \cdots v_l$ a reduced word (for the graph product) in $I$ consisting precisely of all letters of $\Gamma_0$ we define an element of $\cB(L^2(A_f, \varphi_f))$ by setting for $r > l$,
 \begin{equation}\label{Eqn=AGamma}
 \Diag(a_1, \ldots ,  a_l):  b_1 \cdots b_l b_{l+1} \cdots  b_r \Omega_f \mapsto  \overbrace{a_1 b_1}^\circ \cdots  \overbrace{a_l b_l}^\circ b_{l+1} \cdots  b_r \Omega_f,
 \end{equation}
where $b_i \in A_{w_i}^\circ$ with $w_{i} \not = w_{i+1}$ (so $b_1 \cdots b_r$ is a reduced word in the free product).   If $r < l$ then the image in \eqref{Eqn=AGamma} is 0.

 \begin{lem}
 The operator defined in \eqref{Eqn=AGamma} is bounded.
 \end{lem}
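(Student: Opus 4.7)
The plan is to identify $\Diag(a_1,\dots,a_l)$ as (the restriction to a subspace of) a tensor product of bounded operators on a Hilbert space, which immediately gives both boundedness and a norm bound.

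First I would unpack the domain. The range of the projection $P^f_{\Gamma_0}$ defined in \eqref{Eqn=PGammaF} consists precisely of those vectors in $L^2(A_f,\varphi_f)$ whose reduced word decomposition in the \emph{free} product begins with the letters $v_1,\dots,v_l$ in that exact order. Writing $\cH'$ for the closed span of vectors of the form $b_{l+1}\cdots b_r\Omega_f$ with $r\geq l$, $b_{l+1}\in A_{w_{l+1}}^\circ$ and $w_{l+1}\neq v_l$ (together with $\C\Omega_f$ when $r=l$), there is a canonical unitary identification
\begin{equation*}
P^f_{\Gamma_0} L^2(A_f,\varphi_f) \;\cong\; L^2(A_{v_1}^\circ,\varphi_{v_1}) \otimes \cdots \otimes L^2(A_{v_l}^\circ,\varphi_{v_l}) \otimes \cH'.
\end{equation*}
This is an instance of the free product Fock space description and requires nothing beyond the definitions in Section \ref{Sect=GraphProduct}.

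Next I would define, for each $i=1,\dots,l$, the operator $T_i$ on $L^2(A_{v_i}^\circ,\varphi_{v_i})$ by $T_i(b\Omega_{v_i}) = (a_ib)^\circ \Omega_{v_i}$, i.e.\ $T_i$ is the compression to $L^2(A_{v_i}^\circ,\varphi_{v_i})$ of the left multiplication operator by $a_i$ on $L^2(A_{v_i},\varphi_{v_i})$. Since $\varphi_{v_i}$ is GNS-faithful, left multiplication by $a_i$ is bounded of norm $\|a_i\|$, so $\|T_i\|\leq \|a_i\|$. Comparing definitions directly on reduced words $b_1\cdots b_r\Omega_f$ starting with $v_1\cdots v_l$, and noting that the action on all other reduced words is zero, one checks
\begin{equation*}
\Diag(a_1,\dots,a_l) \;=\; \bigl(T_1\otimes\cdots\otimes T_l\otimes\id_{\cH'}\bigr)\circ P^f_{\Gamma_0}.
\end{equation*}

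Since tensor products of bounded Hilbert space operators are bounded with norm equal to the product of norms, we conclude
\begin{equation*}
\bigl\|\Diag(a_1,\dots,a_l)\bigr\| \;\leq\; \|T_1\|\cdots\|T_l\| \;\leq\; \|a_1\|\cdots\|a_l\|,
\end{equation*}
which proves the lemma. The only subtle point is verifying the tensor factorisation on a spanning set; everything else is formal. I do not expect any real obstacle here, since the free product Fock space literally is a direct sum of such tensor products indexed by the possible tails, and $\Diag$ acts diagonally in this decomposition.
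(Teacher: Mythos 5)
Your proof is correct and is essentially the paper's own argument: the paper likewise observes that each subspace $L^2(A_{w_1}^\circ,\varphi_{w_1})\otimes\cdots\otimes L^2(A_{w_r}^\circ,\varphi_{w_r})$ of the free Fock space is invariant and that on it $\Diag(a_1,\ldots,a_l)$ acts as the tensor product operator $P_{w_1}a_1P_{w_1}\otimes\cdots\otimes P_{w_l}a_lP_{w_l}\otimes 1^{\otimes r-l}$, which is bounded. Your version merely packages the tails into a single space $\cH'$ and records the explicit bound $\|a_1\|\cdots\|a_l\|$, which is a harmless refinement of the same idea.
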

 \begin{proof}
Let $r \in \mathbb{N}_{\geq 1}$. Fix a  word $\boldw = w_1 \cdots w_r$ with $w_i \not = w_{i+1}$ for all $1\leq i<r$. Then,
 \[
 L^2(A_{w_1}^\circ, \varphi_{w_1}) \otimes \cdots \otimes  L^2(A_{w_r}^\circ, \varphi_{w_r})
 \]
 is an invariant subspace for the action \eqref{Eqn=AGamma}. Moreover, note that \eqref{Eqn=AGamma} is for $r \geq l$ just the tensor product operator
 \begin{equation}\label{Eqn=DiagExplain}
 P_{w_1} a_1 P_{w_1} \otimes \cdots \otimes P_{w_l} a_l P_{w_l} \otimes 1^{\otimes r-l},
   \end{equation}
   where $P_{w_i} a_i P_{w_i}$ acts on $L^2(A_{w_i}^\circ, \varphi_{w_i})$. Clearly this operator is bounded.
 \end{proof}

Set the diagonal space
\begin{equation}\label{Eqn=AGammaNot}
A_{\Gamma_0} \subseteq \cB(L^2(A_f, \varphi_f))
\end{equation}
 to be the linear span of all operators of the form \eqref{Eqn=AGamma}. $A_{\Gamma_0}$ inherits the operator space structure of $A_f$.
 Set for $d \in \mathbb{N}_{\geq 1}$,
 \begin{equation}\label{Eqn=Xd2}
X_d =  \bigoplus_{l=0}^{d}   \bigoplus_{k=0}^{d-l}   \bigoplus_{\Gamma_0 \in {\rm Cliq}(\Gamma, l)} \bigoplus_{(\Gamma_1, \Gamma_2) \in \Comm(\Gamma_0)}L_{k} \hotimes A_{\Gamma_0} \hotimes K_{d-k-l}.
\end{equation}
 In case $A_v^\circ, v \in V\Gamma$ are all 1-dimensional, the space $X_d$ can also be understood in terms of bounded operators on a Hilbert space. Indeed, the remarks after \eqref{Eqn=Column}   and  \cite[Proposition 3.5]{BlecherPaulsen} give the first two  completely isometric isomorphisms of:
 \begin{equation}\label{Eqn=IsoIso}
 \begin{split}
 L_{k} \hotimes A_{\Gamma_0} \hotimes K_{d-k-l} \simeq  & \rmC_{k} \hotimes A_{\Gamma_0} \hotimes \rmR_{d-k-l} \\
  \simeq & M_{k, d-k-l}(A_{\Gamma_0}) \simeq  M_{k, d-k-l}(\mathbb{C}) \minotimes A_{\Gamma_0}.
 \end{split}
 \end{equation}
 The third completely isometric isomorphism of \eqref{Eqn=IsoIso} holds by definition of the operator space structure on $A_{\Gamma_0}$ as part of the C$^\ast$-algebra $A_f$.
 Next consider the following embedding
 \[
 j_d: \Sigma_d \rightarrow X_d,
 \]
  where the image of $a_1 \otimes \cdots \otimes a_d$ is given as follows: Consider a summand of $X_d$ indexed by  $(l,k,\Gamma_0, \Gamma_1, \Gamma_2)$ with $0 \leq l \leq d, 0 \leq k \leq d-l$ and $\Gamma_0 \in \Cliq(\Gamma,l), (\Gamma_1, \Gamma_2) \in \Comm(\Gamma_0)$. Then the restriction of the image of $j_d$ to this summand is given by
 \begin{equation}\label{Eqn=JdDefinition}
 \begin{split}
 j_d(a_1 \otimes \cdots \otimes a_d)\vert_{X_d}
 = & ( P_{v_{\sigma(1)}}^f a_{ \sigma(1) }  P_{v_{\sigma(1)}}^{f\:\perp}) \otimes   \cdots \otimes    (P_{v_{\sigma(k)}}^{f} a_{ \sigma(k) }  P_{v_{\sigma(k)}}^{f \: \perp} )  \\
 & \otimes   \Diag(a_{ \sigma(k+1)  } , \ldots  ,  a_{ \sigma(k+l) }    )    \\
& \otimes  (P_{v_{\sigma(k+l+1) } }^{f \: \perp} a_{  \sigma(k+l+1) }  P_{v_{\sigma(k+l+1) } }^f ) \otimes \cdots \otimes    (  P_{v_{\sigma(d)}}^{f\: \perp}  a_{ \sigma(d)}  P_{v_{\sigma(d)}}^f ),
 \end{split}
 \end{equation}
 with $\sigma$  given by Definition \ref{Dfn=Sigma}; if such $\sigma$ is non-existent then the image of  $j_d(a_1 \otimes \cdots \otimes a_d)$ in  the summand of $X_d$ corresponding to $(l,k,\Gamma_0, \Gamma_1, \Gamma_2)$ is 0.
Let
\[
\pi_d^f: X_d \rightarrow \mathcal{B}(L^2(A_f, \varphi_f))
\]
be the direct sums of product maps.
For a 5-tuple $(l,k,\Gamma_0, \Gamma_1, \Gamma_2)$ as above, let
\begin{equation}\label{Eqn=PiDs}
\pi_{d, l, k,\Gamma_0, \Gamma_1, \Gamma_2}^f: L_{k} \hotimes A_{\Gamma_0} \hotimes K_{d-k-l} \rightarrow \mathcal{B}( L^2(A_f),\varphi_f )
\end{equation}
be the product map $\pi_d^f$ restricted to the corresponding summand of $X_d$.   This map is completely bounded as follows from the definition of the Haagerup tensor product.
%Then $\pi_d^f$ restricts to a completely contractive map on each summand $ L_{k} \hotimes A_{\Gamma_0} \hotimes K_{d-k-l} \rightarrow \mathcal{B}(L^2(A_f, \varphi_f))$ by the very definition of the Haagerup tensor product.
Consequently, $\pi_d^f: X_d \rightarrow \mathcal{B}(L^2(A_f, \varphi_f))$ is completely bounded by the number of summands of $X_d$, i.e.
\[
\Vert \pi_d^f \Vert_{cb} \leq (\# \Cliq(\Gamma) )^3 d.
\]

\vspace{0.3cm}

\vspace{0.3cm}

\noindent {\it Definition of two partial isometries.} Given a 5-tuple  $(l,k, \Gamma_0, \Gamma_1, \Gamma_2)$ as in the previous paragraph, we define two partial isometries.
\begin{itemize}
\item We define a partial isometry,
\begin{equation}
\begin{split}
\mathcal{Q}_{l,k,\Gamma_0,   \Gamma_1, \Gamma_2}: & L^2(A, \varphi) \rightarrow L^2(A_f, \varphi_f),
\end{split}
\end{equation}
as follows. Consider a reduced operator $b_{1} \cdots b_n \in A$ of type $\boldw$. We need to define a permutation $\sigma_{\cQ} = \sigma^\boldw_{\cQ, l,k, \Gamma_0, \Gamma_1, \Gamma_2}$   coming from a shuffle equivalence satisfying \eqref{Item=Sigma1} -- \eqref{Item=Sigma4} of Definition \ref{Dfn=Sigma} and the additional relation that $\vert s w_{\sigma_{\cQ}(k+l+1)} \cdots w_{\sigma_{\cQ}(n)} \vert = n-k-l+1$ whenever $s \in V \Gamma_2$.  Moreover we assume that this $\sigma_{\cQ}$ is chosen such that each of the expressions $w_{\sigma_{\cQ}(k)} $ $\cdots$ $w_{\sigma_{\cQ}(1)}$ (decreasing indices), $w_{\sigma_{\cQ}(k+1)}  \cdots  w_{\sigma_{\cQ}(k+l)}$   and $w_{\sigma_{\cQ}(k+l+1)} \cdots w_{\sigma_{\cQ}(n)}$   are in $I$. If $\sigma_{\cQ}$ exists it is unique and we set
 \[
 \mathcal{Q}_{l,k,\Gamma_0,   \Gamma_1, \Gamma_2}  b_{1} \cdots b_n\Omega  = b_{ \sigma_{\cQ}(1) } \cdots b_{\sigma_{\cQ}(n)}  \Omega_f.
 \]
  If $\sigma_{\cQ}$ does not exist we set  $\mathcal{Q}_{l,k ,\Gamma_0,   \Gamma_1, \Gamma_2} b_{1} \cdots b_n \Omega = 0$.

\item We define the partial isometry,
\begin{equation}
\begin{split}
\mathcal{R}_{l, k,\Gamma_0,   \Gamma_1}: & L^2(A, \varphi) \rightarrow L^2(A_f, \varphi_f),
\end{split}
\end{equation}
as follows.  Consider a reduced operator $b_{1} \cdots b_n \in A$ of type $\boldw$.  We need to define a permutation  $\sigma_{\cR} = \sigma^\boldw_{\cR, l,k, \Gamma_0, \Gamma_1}$  coming from a shuffle equivalence  satisfying \eqref{Item=Sigma1} -- \eqref{Item=Sigma4} of Definition \ref{Dfn=Sigma}.
 Moreover we assume that this $\sigma_{\cR}$ is chosen such that each of the expressions $w_{\sigma_{\cR}(1)} $ $\cdots$ $w_{\sigma_{\cR}(k)}$, $w_{\sigma_{\cR}(k+1)} $$\cdots $$w_{\sigma_{\cR}(k+l)}$ and $w_{\sigma_{\cR}(k+l+1)} \cdots w_{\sigma_{\cR}(n)}$ are in $I$. If $\sigma_{\cR}$ exists then it is unique  and we set
 \[
 \mathcal{R}_{l, k,\Gamma_0,   \Gamma_1}  b_{1} \cdots b_n\Omega  = b_{ \sigma_{\cR}(1) } \cdots b_{\sigma_{\cR}(n)}  \Omega_f.
 \]
  If $\sigma_{\cR}$ does not exist we set  $\mathcal{R}_{l, k,\Gamma_0,   \Gamma_1} b_{1} \cdots b_n \Omega = 0$.
 \end{itemize}

\vspace{0.3cm}

\noindent The maps $\mathcal{Q}_{l, k,\Gamma_0,   \Gamma_1, \Gamma_2}$ and $\mathcal{R}_{l, k,\Gamma_0,   \Gamma_1}$ preserve orthogonality and inner products and are therefore partial isometries.

\begin{prop}\label{Prop=GeneralKin}
Let $x = a_{1} \otimes \cdots \otimes a_d  \in \Sigma_d$ be of type $\boldv = v_1 \cdots v_d$ and let $x_{d,l,k,\Gamma_0, \Gamma_1, \Gamma_2}$ with  $0 \leq l \leq d$, $0 \leq k \leq d-l$,  $\Gamma_0 \in \Cliq(\Gamma,l)$ and $(\Gamma_1, \Gamma_2) \in \Comm(\Gamma_0)$ be the corresponding summands of $j_d(x)$ in $X_d$ as in \eqref{Eqn=JdDefinition}.   We have
%, for partial isometries $\mathcal{R}_{k,l,\Gamma_0, \Gamma_1}$ and $ \mathcal{Q}_{d-l-k, l, \Gamma_0, \Gamma_2, \Gamma_1}$ defined below, that
 \begin{equation}\label{Eqn=TediousClaim}
 \begin{split}
& \mathcal{R}_{l,k,\Gamma_0, \Gamma_1}^\ast  \pi_{d, l,k,\Gamma_0, \Gamma_1, \Gamma_2}^f(  x_{d,l,k,\Gamma_0, \Gamma_1, \Gamma_2}  )
 \mathcal{Q}_{l, d-l-k,  \Gamma_0, \Gamma_2, \Gamma_1} \\
 =&
  (  P_{v_{\sigma(1)} }  a_{\sigma(1) }  P_{v_{\sigma(1)} }^\perp )   \cdots   ( P_{v_{\sigma(k)}}    a_{\sigma(k)}   P_{v_{\sigma(k)}}^\perp ) \\
   & \quad  \times \quad   ( P_{v_{\sigma(k+1)}}    a_{\sigma(k+1)}   P_{v_{\sigma(k+1)}} )  \cdots ( P_{ v_{\sigma(k+l)} }    a_{\sigma(k+l)}   P_{v_{\sigma(k+l)}} ) \\
   &   \quad \times  \quad   (P_{v_{\sigma(k+l+1)}}^\perp   a_{\sigma(k+l+1)}  P_{v_{\sigma(k+l+1)}} )  \cdots   (P_{v_{\sigma(d)}}^\perp a_{\sigma(d)} P_{v_{\sigma(d)}}),
 \end{split}
 \end{equation}
 where $\sigma$ is defined as in (1) -- (6) of Definition \ref{Dfn=Sigma} and the right hand side should be understood as 0 otherwise.
%Let $\Gamma$ be a finite simplicial graph and consider a graph product $(A, \varphi) = \ast_{v, \Gamma} (A_v, \varphi_v)$ of unital C$^\ast$-algebras $A_v$ with GNS-faithful states $\varphi_v$. Then, for each $d \in \mathbb{N}_{\geq 0}$ the map $\pi_d$ defined in \eqref{Eqn=PiDnonfree} is completely bounded. Moreover, its complete bound can be majorized by $(\# \Cliq(\Gamma) )^3 d$.
\end{prop}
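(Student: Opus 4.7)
My plan is to verify the identity by evaluating both sides on a basis of reduced vectors $\xi = b_1 \cdots b_n \Omega \in L^2(A,\varphi)$ with $b_i \in A_{w_i}^\circ$ and $\boldw = w_1 \cdots w_n \in I$ a reduced word. The identity is an equality of operators on $L^2(A,\varphi)$, and since the spans of such reduced vectors are total, it suffices to check equality on each $\xi$. The RHS is a composition of graph-product create/diagonal/annihilate factors $(P_v a P_v^\perp)$, $(P_v a P_v)$, $(P_v^\perp a P_v)$ in the prescribed order, and its action on $\xi$ can be unfolded directly from the definition of $P_v$ as the projection onto words starting with $v$: the annihilations on the far right demand that $\boldw$ be shuffle-equivalent to one starting with the clique $V\Gamma_2$ followed by the correct tail, the middle diagonal projections demand that after that annihilation the resulting word starts with a rearrangement of $V\Gamma_0$, and the creations on the left then prepend the $k$ letters $v_{\sigma(1)} \cdots v_{\sigma(k)}$.

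Next I would compute the LHS in three steps. First, by the definition of the partial isometry $\cQ_{l, d-l-k, \Gamma_0, \Gamma_2, \Gamma_1}$, the image $\cQ\xi$ is the free-product vector $b_{\sigma_\cQ(1)} \cdots b_{\sigma_\cQ(n)} \Omega_f$ whenever the associated permutation $\sigma_\cQ^{\boldw}$ exists, and zero otherwise; crucially the role-swap of $(k,\Gamma_1,\Gamma_2) \leftrightarrow (d-l-k,\Gamma_2,\Gamma_1)$ places the $V\Gamma_2$-block and the $V\Gamma_0$-block at the \emph{start} of the free-product word, which is precisely where the free-product annihilations $(P_{v_{\sigma(j)}}^{f\,\perp} a_{\sigma(j)} P_{v_{\sigma(j)}}^f)$ and diagonal operators $\Diag(a_{\sigma(k+1)},\ldots,a_{\sigma(k+l)})$ must act. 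Second, I apply $\pi_{d,l,k,\Gamma_0,\Gamma_1,\Gamma_2}^f(x_{d,l,k,\Gamma_0,\Gamma_1,\Gamma_2})$: using the explicit formula \eqref{Eqn=DiagExplain} for $\Diag$ as the tensor product $P_{w_i} a_i P_{w_i} \otimes 1^{\otimes r-l}$ on each free-product subspace $\cH_{\boldw'}$, and the standard action of creation/annihilation operators of Ricard--Xu on a free Fock space, this yields a new free-product reduced vector whose letter sequence is exactly $v_{\sigma(1)} \cdots v_{\sigma(k)}$ followed by the modified clique letters followed by the surviving tail of $\boldw$. Third, the application of $\cR_{l,k,\Gamma_0,\Gamma_1}^\ast$ converts this free-product vector back to the corresponding graph-product vector, provided $\sigma_\cR$ exists for the resulting word.

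The remaining work is to show that the existence conditions on $\sigma_\cQ$, on $\sigma_\cR$ for the output word, and on $\sigma$ for the RHS all coincide, and that when they do, the scalars and letters produced agree. The key observations that make this work are: (i) $\cQ$ and $\cR$ are partial isometries built from shuffle equivalences, so letter data is preserved exactly, and the identifications $\cH_\boldv \simeq \cH_\boldw$ through $\cQ_{\boldv,\boldw}$ from Section~\ref{Sect=GraphProduct} let us treat the reshufflings canonically; (ii) the uniqueness of the permutation $\sigma$ from Definition~\ref{Dfn=Sigma} (and of $\sigma_\cQ$, $\sigma_\cR$) means there is at most one way to match letters, so once existence is established on both sides the outputs must coincide; (iii) the asymmetric ``decreasing index'' convention in the definition of $\cQ$ is precisely what is needed so that $\cR^\ast$ on the output of $\pi^f(\cdot)\cQ\xi$ produces a word in $I$.

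\textbf{Expected main obstacle.} The hard part is the combinatorial bookkeeping: verifying that for every reduced $\boldw$, the existence of $\sigma^{\boldw}_{\cQ, l, d-l-k, \Gamma_0, \Gamma_2, \Gamma_1}$ together with non-vanishing of the free-product factors forces existence of $\sigma^{\boldw'}_{\cR, l,k,\Gamma_0,\Gamma_1}$ for the output word $\boldw'$, and that this chain of existence is equivalent to the existence of a shuffle of $\boldw$ into the form required by the RHS (first $k$ letters ending in $V\Gamma_1$ with no extraneous $\Link(\Gamma_0)$ letters at the end, then $V\Gamma_0$, then a tail starting with $V\Gamma_2$ with no extraneous $\Link(\Gamma_0)$ letters at the start). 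The maximality built into the conditions (3)--(6) of Definition~\ref{Dfn=Sigma} on both $\Gamma_1$ and $\Gamma_2$ guarantees uniqueness and is where the argument ultimately closes.
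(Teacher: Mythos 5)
Your plan follows essentially the same route as the paper's proof: evaluate both sides on reduced vectors $b_1\cdots b_r\Omega$, push the vector through $\cQ$, then $\pi^f_{d,l,k,\Gamma_0,\Gamma_1,\Gamma_2}$, then $\cR^\ast$, and match the existence conditions and the resulting letters/scalars against the direct computation of the right-hand side (the paper additionally reduces to the case of trivial $\sigma$ at the outset to lighten notation, and argues that the right-hand-side output vector lies in $\ker(\cR)^\perp$ so that $\cR^\ast$ recovers it). The one detail to watch in your bookkeeping is that the first block of the $\cQ$-shuffled word is the reversal $v_{\sigma(d)}\cdots v_{\sigma(k+l+1)}$ and therefore \emph{ends} with $V\Gamma_2$ rather than starting with it, but this is precisely the combinatorial matching your plan defers and it closes as you describe.
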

\begin{proof}
  Note that both sides of \eqref{Eqn=TediousClaim} equal 0 if a $\sigma$ as in the statement  of the proposition does not exist, c.f. the definition of $j_d$. So from now on we  assume that  $\sigma$ exists and that the right hand side of \eqref{Eqn=TediousClaim} is non-zero for some elementary tensor product $a_{1} \otimes \cdots \otimes a_{d} \in \Sigma_d$ of type $v_1 \cdots v_d$.

  We argue first that without loss of generality we may assume that the permutation $\sigma$ on the right hand side of \eqref{Eqn=TediousClaim} is trivial. Indeed, if $\sigma$ is non-trivial then we may work with the element $x' = a_{\sigma(1)} \otimes \cdots \otimes a_{\sigma(d)} \in \Sigma_d$ instead of $x$. Then note that the left hand side   of \eqref{Eqn=TediousClaim} for $x$ and $x'$ are the same. Similarly the right hand side of \eqref{Eqn=TediousClaim} is the same for $x$ and $x'$.

So assume that $\sigma$ is trivial.   Now take a reduced operator $b_{1}   \cdots   b_{r}, r \geq 0$  of type $\boldw = w_1 \cdots w_r$. We first prove the proposition for the case that the permutation $\sigma_{\cQ} := \sigma_{\cQ, l, d-l-k, \Gamma_0, \Gamma_2, \Gamma_1}^\boldw$ exists. In that case  set,
\[
\cQ_{l, d-l-k , \Gamma_0, \Gamma_2, \Gamma_1}( b_1  \cdots b_r \Omega ) = b_{1}' \cdots b_r' \Omega_f,
\]
with $b_i' = b_{\sigma_{\cQ}(i)}, w_i' = w_{ \sigma_{\cQ}(i) }$. Further,
\[
w_1' \cdots w_r' \simeq \overbrace{(\ast \cdots \ast V\Gamma_2)}^{d-l-k} (V\Gamma_0) (\diamond \cdots \diamond),
\]
 where $(\diamond \cdots \diamond)$ has no letters in $V\Gamma_1$ at the start.
 We have by definition of $\pi^f_{d,l,k,\Gamma_0, \Gamma_1, \Gamma_2}$,
\[
\begin{split}
& \pi^f_{d,l,k,\Gamma_0, \Gamma_1, \Gamma_2} (x_{d,l,k, \Gamma_0, \Gamma_1, \Gamma_2} ) \\
= &
( P_{v_{1}}^f a_{1 }  P_{v_{1}}^{f \:\perp} )   \cdots   (P_{v_{k}}^f a_{k}  P_{v_{k}}^{f \: \perp} )
     \Diag(    a_{ k+1 },   \cdots,       a_{ k+l}   )
  ( P_{v_{k+l+1}}^{f \: \perp}  a_{ k+l+1}  P_{v_{k+l+1}}^{f} ) \cdots   (  P_{v_{ d } }^{f \: \perp} a_{ d }  P_{v_{ d } }^{f} ).
\end{split}
 \]
Then, for the left hand side of \eqref{Eqn=TediousClaim},
%if $\cQ_{d-l-k, l, \Gamma_0, \Gamma_2, \Gamma_1}( b_1 \cdots b_r \Omega )$ is non-zero,
\begin{equation}\label{Eqn=FirstStuff}
\begin{split}
&  \pi^f_{d,l,k,\Gamma_0, \Gamma_1, \Gamma_2} (x_{d,l,k, \Gamma_0, \Gamma_1, \Gamma_2} )\cQ_{ l,d-l-k, \Gamma_0, \Gamma_2, \Gamma_1}( b_1 \cdots b_r \Omega)\\
= & \langle a_{d} b_{1}'\Omega_f,  \Omega_f \rangle \cdots \langle a_{k+l+1} b_{d-k-l}' \Omega_f,  \Omega_f \rangle
 a_1 \cdots a_k    \overbrace{(a_{k+1} b_{d-k-l+1}')}^\circ \cdots \overbrace{(a_{k+l} b_{d-k}')}^\circ   b_{d-k+1}' \cdots b_{r}' \Omega_f.
\end{split}
\end{equation}

Now,  for the right hand side of \eqref{Eqn=TediousClaim} we consider an expression,
\begin{equation}\label{Eqn=LastStuff1}
\begin{split}
 & ( P_{  v_{ 1 } } a_{1}  P_{  v_{ 1 } }^\perp)   \cdots   ( P_{v_{ k }} a_{ k } P_{v_{ k }}^\perp )
    (P_{v_{k+1 } }    a_{ k+1  }  P_{v_{k+1 } }  )  \cdots     (  P_{v_{k+l}}  a_{ k+l }  P_{v_{k+l}}  )   \\
   & \times (P_{v_{ k+l+1 }}^\perp  a_{  k+l+1 } P_{v_{ k+l+1 }} ) \cdots   (P_{v_{ d }}^\perp  a_{ d }  P_{v_{ d }})  b_1 \cdots b_r \Omega.
 \end{split}
 \end{equation}
 The assumption that $\sigma$ is trivial yields that  $v_{k+l+1} \cdots v_d$ starts with $V\Gamma_2$, that the letters  $v_{k}, \ldots$, $v_{k+l}$ exhaust $V\Gamma_0$ and that the letters at the end of $v_{1} \cdots v_{k}$ that commute with $\Gamma_0$ are precisely given by  $V\Gamma_1$.
 If \eqref{Eqn=LastStuff1} is non-zero then let us argue that there exists a word  $w_1'\cdots w_r'$ as defined above. Indeed, if  \eqref{Eqn=LastStuff1} is non-zero, then we may shuffle  $b_1 \cdots b_r$ into an operator $b_1' \cdots b_r'$  of type  $w_1'\cdots w_r'$ such that:  $w_1' \cdots w_{d-k-l}'$ equals  $v_{d} \cdots v_{k+l+1}$ and ends with $V\Gamma_2$;  the letters $w_{d-k-l+1}'\cdots w_{d-k}'$  exhaust  $V\Gamma_0$;  $w_{d-k+1}' \cdots  w_{r}'$ does not have a letter of $V\Gamma_1$ up front (because if that happens then applying $P_{v_i}^\perp, i \leq k$ will give zero).
   So we conclude that \eqref{Eqn=LastStuff1} can only be non-zero if there exists $w_1'\cdots w_r'$ as defined above,  in which case
 \begin{equation}\label{Eqn=LastStuff2}
 \begin{split}
\eqref{Eqn=LastStuff1} = & ( P_{v_{ 1 }}  a_{ 1 }   P_{v_{ 1 }}^\perp)   \cdots   (P_{v_{ k }}  a_{  k  }  P_{v_{ k }}^\perp)  (P_{v_{k+1 } }    a_{ k+1 }  P_{v_{k+1 } }  )  \cdots     (  P_{v_{k+l}}  a_{k+l}  P_{v_{k+l}}  ) \\
 & \qquad \times   ( P_{v_{ k+l+1 }}^\perp  a_{  k+l+1  }  P_{v_{ k+l+1 }} ) \cdots   ( P_{v_{ d }}^\perp  a_{ d  } P_{v_{ d }}) b_1' \cdots b_r' \Omega \\
 = & \langle a_{d} b_{1}' \Omega ,  \Omega \rangle \cdots \langle a_{k+l+1} b_{d-k-l}'\Omega ,  \Omega \rangle
  a_1 \cdots a_k    \overbrace{(a_{k+1} b_{d-k-l+1}')}^\circ \cdots \overbrace{(a_{k+l} b_{d-k}')}^\circ   b_{d-k+1}' \cdots b_{r}' \Omega.
\end{split}
\end{equation}
If one of the terms with a $\overbrace{\:}^\circ$ is zero, then also this term was zero in \eqref{Eqn=FirstStuff} and the proposition is proved. If none of these terms are zero, then
 the image of \eqref{Eqn=LastStuff2} under $\mathcal{R}_{l,k, \Gamma_0, \Gamma_1}$ equals \eqref{Eqn=FirstStuff} and \eqref{Eqn=LastStuff1} is in $\ker (\mathcal{R}_{l,k, \Gamma_0, \Gamma_1})^\perp$. This concludes the proposition in case $\sigma_\cQ$ exists.

 If $\sigma_\cQ$ does not exist, then
\[
\cQ_{l, d-l-k,  \Gamma_0, \Gamma_2, \Gamma_1}( b_1 \cdots b_r \Omega ) = 0.
\]
On the other hand we already noted that \eqref{Eqn=LastStuff2} can only be nonzero if a permutation  $\sigma_\cQ$ exists. So if $\sigma_{\cQ}$ is non-existent then also \eqref{Eqn=LastStuff2} is zero, yielding the proposition.

\end{proof}

 Set the product map
 \[
 \rho_d: \Sigma_d \rightarrow \cB(L^2(A, \varphi)): a_1 \otimes \cdots \otimes a_d \mapsto a_1 \cdots a_d.
 \]
We define the map for $d \in \mathbb{N}_{\geq 1}$,
\begin{equation}\label{Eqn=PiDnonfree}
\pi_d: j_d(\Sigma_d) \rightarrow \mathcal{B}(L^2(A, \varphi)): j_d(x) \mapsto \rho_d(x),
\end{equation}
so that by definition $\pi_d \circ j_d = \rho_d$. Now the crucial part is to show that the map $\pi_d$ is well-defined and completely bounded with linear bound in $d$. This is where we use the announced intertwining argument between graph products and free products.

Now we are ready for the main theorem of this section.
  Recall that the word length projection $\chi_d$ was defined in  \eqref{Eqn=WordLengthProj}.

\begin{theorem}[Graph product Khintchine inequality] \label{Thm=LongTheoremGraph}
 Let $\Gamma$ be a finite simplicial graph and consider a graph product $(A, \varphi) = \ast_{v, \Gamma} (A_v, \varphi_v)$ of unital C$^\ast$-algebras $A_v$ with GNS-faithful states $\varphi_v$.   Then for every $d \in \mathbb{N}_{\geq{1}}$ there exist  maps
\[
j_d: \chi_d( A ) \rightarrow X_d, \qquad \pi_d: \Dom(\pi_d) \subseteq  X_d \rightarrow  \chi_d( A  ),
\]
with  $\Dom(\pi_d) = j_d( \chi_d(A))$ and where $X_d$ is defined in \eqref{Eqn=Xd2} and \eqref{Eqn=IsoIso} such that:
\begin{enumerate}[label=(\roman*)]
\item\label{Item=Khin1}  $\pi_d \circ j_d$ is the identity on $\chi_d( A )$;
\item \label{Item=Khin3}  $\Vert \pi_d: \Dom(\pi_d) \rightarrow A \Vert_{cb} \leq (\# \Cliq(\Gamma))^3 d$.
\end{enumerate}
\end{theorem}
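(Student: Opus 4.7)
The plan is to combine Propositions \ref{Prop=TProjectionDec} and \ref{Prop=GeneralKin} to exhibit $\rho_d$ as an intertwining of the free-product product map $\pi_d^f$ by the partial isometries $\cR$ and $\cQ$, and then take $\pi_d$ to be the resulting (well-defined) inverse of $j_d$.

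Concretely, for an elementary tensor $x = a_1 \otimes \cdots \otimes a_d \in \Sigma_d$ of reduced type, Proposition \ref{Prop=TProjectionDec} expresses $\rho_d(x)$ as a sum over $5$-tuples $(l,k,\Gamma_0,\Gamma_1,\Gamma_2)$ of projection-sandwiched reduced operators, while Proposition \ref{Prop=GeneralKin} identifies each such summand with $\cR^{\ast}_{l,k,\Gamma_0,\Gamma_1}\,\pi^f_{d,l,k,\Gamma_0,\Gamma_1,\Gamma_2}(j_d(x)_{l,k,\Gamma_0,\Gamma_1,\Gamma_2})\,\cQ_{l,d-l-k,\Gamma_0,\Gamma_2,\Gamma_1}$. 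Summing over all admissible $5$-tuples yields the key identity $\rho_d(x) = \sum \cR^{\ast} \pi^{f}(j_d(x)_{\bullet}) \cQ$. This suggests defining a candidate map $\widetilde{\pi}_d\colon X_d \to \cB(L^2(A,\varphi))$ on an arbitrary $y = \bigoplus y_{\bullet} \in X_d$ by the same right-hand side applied to $y$; then $\widetilde{\pi}_d \circ j_d = \rho_d$ on $\Sigma_d$ by construction, and $\pi_d$ is taken to be the restriction of $\widetilde{\pi}_d$ to $j_d(\chi_d(A))$.

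To pass $j_d$ from $\Sigma_d$ to $\chi_d(A)$ I would first check directly from \eqref{Eqn=JdDefinition} and Definition \ref{Dfn=Sigma} that $j_d$ is invariant under shuffle equivalences of the tensor factors: a shuffle equivalence $\alpha$ of the type word $\boldv$ transforms the selecting permutation $\sigma$ into $\alpha^{-1}\sigma$, leaving the selected tensor factors and the associated clique data unchanged. On the other hand, by GNS-faithfulness of the $\varphi_v$ together with the Fock-space realization of the graph product (which makes reduced operators of pairwise non-shuffle-equivalent types linearly independent), the kernel of $\rho_d\colon \Sigma_d \to A$ is exactly the span of shuffle-equivalence differences. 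Hence $j_d$ descends to a well-defined linear map $j_d\colon \chi_d(A)\to X_d$ satisfying $\rho_d = \widetilde{\pi}_d \circ j_d$, which is property \ref{Item=Khin1}.

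For the cb-bound \ref{Item=Khin3}, each summand of $\widetilde{\pi}_d$ factors as the product map $\pi^f_{d,l,k,\Gamma_0,\Gamma_1,\Gamma_2}$, which is completely contractive by the universal property of the Haagerup tensor product, followed by left/right multiplication by the partial isometries $\cR^{\ast}$ and $\cQ$, itself completely contractive. Summing over the at most $(\#\Cliq(\Gamma))^3 d$ admissible $5$-tuples (exactly the count already used for $\pi_d^f$) yields $\Vert \widetilde{\pi}_d \Vert_{cb} \leq (\#\Cliq(\Gamma))^3 d$, and therefore the same bound for $\pi_d$. The main obstacle in this plan is the well-definedness step, where the combinatorial shuffle-equivalence structure of graph products has to be carefully reconciled with the concrete choice of permutation in Definition \ref{Dfn=Sigma}; once this is in place the cb-bound is a direct bookkeeping exercise based on the preceding two propositions.
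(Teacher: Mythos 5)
Your proposal is correct and follows essentially the same route as the paper: part (i) comes from Proposition \ref{Prop=TProjectionDec}, and Proposition \ref{Prop=GeneralKin} identifies $\pi_d$ on $j_d(\chi_d(A))$ with the direct sum of the completely contractive maps $\mathcal{R}^\ast_{l,k,\Gamma_0,\Gamma_1}\,\pi^f_{d,l,k,\Gamma_0,\Gamma_1,\Gamma_2}(\,\cdot\,)\,\mathcal{Q}_{l,d-l-k,\Gamma_0,\Gamma_2,\Gamma_1}$, which gives both well-definedness and the bound by the number of summands. Your extra care about $j_d$ descending from $\Sigma_d$ to $\chi_d(A)$ via shuffle-invariance is a point the paper leaves implicit, but it is the same argument.
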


\begin{proof} Part
\ref{Item=Khin1} follows from Proposition  \ref{Prop=TProjectionDec}.
 From Proposition  \ref{Prop=GeneralKin} we see that on the domain $j_d( \chi_d(A))$ the map  $\pi_d$ is given by the direct sum of the maps
 \[
 \mathcal{R}_{l,k, \Gamma_0, \Gamma_1}^\ast \pi_{d, l,k, \Gamma_0, \Gamma_1, \Gamma_2}^f(   \: \cdot \:  )  \mathcal{Q}_{ l,d-l-k, \Gamma_0, \Gamma_2, \Gamma_1}.
    \]
    In particular, $\pi_d$ is well-defined.
    As each of these summands is completely contractive and there are at most $(\# \Cliq(\Gamma))^3 d$  summands,   we see that $\pi_d$ is completely bounded with the desired complete bound.
\end{proof}

\begin{rmk}
It is possible that a clever refinement of the present methods or perhaps an entirely different proof could yield an improvement of the constant $(\# \Cliq(\Gamma))^3 d$ in the Khintchine inequality. For the applications we have in mind, the current method cannot be altered without affecting some of the results in the remainder of the paper as they depend on the explicit form of the map $j_d$ and the space $X_d$. We leave the question of whether an improved constant can be attained to future work.
\end{rmk}

\section{Khintchine and Haagerup inequalities for right-angled Hecke C$^\ast$-algebras} \label{Inequalities}
In this section we make the Khintchine inequality from Section \ref{A graph product Khintchine-Haagerup inequality} explicit in the case of Hecke algebras. As a consequence  we derive a Haagerup inequality for right-angled Coxeter groups and their Hecke deformations. Such a Haagerup inequality shows that the $L^\infty$-norm of an operator of length $d$ can be estimated with the  $L^2$-norm up to a polynomial bound $Q(d)$. It is a   generalisation of  Haagerup's inequality for free groups $\mathbb{F}_n$, see \cite{HaagerupExample} or also \cite[Section 9.6]{Pisier}. It entails that there exists a constant $C$ such that for every $x \in \mathbb{C}[\mathbb{F}_n]$ supported on group elements of length $d \in \mathbb{N}_{\geq 1}$ we have
\[
\Vert x \Vert \leq C d \Vert x \Vert_2.
\]
In particular, here $Q(d) = d$ so that we have a linear estimate in the length $d$.
Haagerup and Khintchine inequalities  have found a wide range of applications in operator theory. We will give further applications to C$^\ast$-simplicity problems in Section \ref{simplicity}.

\vspace{0.3cm}

Let $(W,S)$ be a  finitely generated  right-angled Coxeter system and let $q = (q_s)_{s \in S} \in \mathbb{R}_{> 0}^{(W,S)}$ be a multi-parameter set.  Let $\Gamma$ be the  graph associated to $(W,S)$ defined in Section \ref{Sect=GraphCoxeter}.
From  \eqref{Eqn=GraphHeckeIso}  we see that we have a canonical isomorphism
\[
(C_{r,q}^\ast(W), \tau_q) \simeq \ast_{s, \Gamma} (C_{r,q_s}^\ast(W_s), \tau_{q_s}).
\]
  Furthermore, we may specialize Section \ref{A graph product Khintchine-Haagerup inequality} to $\ast_{s, \Gamma} (C_{r,q_s}^\ast(W_s), \tau_{q_s})$. In particular this defines the space of diagonal operators $A_{\Gamma_0}$ as in \eqref{Eqn=AGammaNot} and the operator space $X_d$ of \eqref{Eqn=Xd2}. We first observe that  $A_{\Gamma_0}$ simplifies. Recall that $P_s, s \in S$ and $P_{\Gamma_0}^f, \Gamma_0 \in \Cliq(\Gamma)$ were defined in \eqref{Eqn=ProjectionSpace} and \eqref{Eqn=PGammaF}.

\begin{lem}\label{Lem=Simple}
For a right-angled Coxeter group $(W,S)$ we have $A_{\Gamma_0} = \mathbb{C} P_{\Gamma_0}^f$ for any $\Gamma_0 \in \Cliq(\Gamma,l)$. Moreover, for $\boldv = s_1 \cdots s_l, s_i \in S$ a reduced word in $I$ with $\{s_1, \ldots, s_l \} = V\Gamma_0$ we have,
 \[
 \Diag \left( P_{s_{1}} T_{s_{1}}^{(q)} P_{s_{1}},  \ldots,  P_{s_{l}}  T_{s_{l}}^{(q)} P_{s_{l}} \right)   = (\prod_{s \in V\Gamma_0}  p_s(q)) P_{\Gamma_0}^f.
 \]
\end{lem}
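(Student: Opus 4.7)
The plan is to exploit the fact that in the right-angled case each vertex algebra $A_s := C_{r,q_s}^\ast(W_s)$ is two-dimensional, so that every factor $L^2(A_s^\circ, \tau_{q_s})$ appearing in the free product Fock space is one-dimensional. Concretely, $A_s$ has basis $\{T_e^{(q_s)}, T_s^{(q_s)}\}$ and $\tau_{q_s}(T_s^{(q_s)}) = 0$, so $A_s^\circ = \mathbb{C} T_s^{(q_s)}$ and $L^2(A_s^\circ, \tau_{q_s}) = \mathbb{C}\delta_s$. A direct application of \eqref{Eqn=TOp} on the two-dimensional space $\ell^2(W_s) = \mathbb{C}\delta_e \oplus \mathbb{C}\delta_s$ gives $T_s^{(q)}\delta_s = \delta_e + p_s(q)\delta_s$, and since $P_s$ is precisely the rank-one projection onto $\mathbb{C}\delta_s$, one reads off
\[
P_s T_s^{(q)} P_s = p_s(q)\, P_s,
\]
which on $L^2(A_s^\circ)$ is multiplication by the scalar $p_s(q)$.

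Next I would unpack the defining formula \eqref{Eqn=DiagExplain}: for $\Gamma_0 \in \Cliq(\Gamma, l)$ with representative $\boldv = s_1 \cdots s_l \in I$ exhausting $V\Gamma_0$, and for arbitrary $a_i \in A_{s_i}$, the operator $\Diag(a_1, \ldots, a_l)$ annihilates every summand $\mathcal{H}_\boldw$ of $L^2(A_f, \varphi_f)$ whose (free-product) reduced word $\boldw$ does not start with $s_1 \cdots s_l$. Its support is therefore precisely $P_{\Gamma_0}^f L^2(A_f, \varphi_f)$, on which it acts as
\[
(P_{s_1} a_1 P_{s_1}) \otimes \cdots \otimes (P_{s_l} a_l P_{s_l}) \otimes 1^{\otimes r - l}.
\]
Since each $L^2(A_{s_i}^\circ)$ is one-dimensional, every compression $P_{s_i} a_i P_{s_i}$ is a scalar $\lambda_i(a_i) \in \mathbb{C}$ on that factor, and consequently
\[
\Diag(a_1, \ldots, a_l) = \Bigl(\prod_{i=1}^l \lambda_i(a_i)\Bigr) P_{\Gamma_0}^f.
\]

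To conclude the first assertion, I note that the scalars $\lambda_i(a_i)$ exhaust $\mathbb{C}$ as $a_i$ ranges over $A_{s_i}$ (take $a_i = 1$ to get $\lambda_i = 1$, scale freely), so the span of these diagonal operators is exactly $\mathbb{C} P_{\Gamma_0}^f$, yielding $A_{\Gamma_0} = \mathbb{C} P_{\Gamma_0}^f$. For the explicit identity, plugging $a_i = P_{s_i} T_{s_i}^{(q)} P_{s_i}$ (interpreted via the compressed-operator input, which equals $p_{s_i}(q)$ on $L^2(A_{s_i}^\circ)$ by the first paragraph) gives $\lambda_i = p_{s_i}(q)$, whence the claimed product formula follows. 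There is no real obstacle here; the entire proof is a matter of combining the one-dimensionality of $A_s^\circ$ with the single eigenvalue calculation for $P_s T_s^{(q)} P_s$, and then reading off \eqref{Eqn=DiagExplain}.
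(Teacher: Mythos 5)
Your proof is correct and follows essentially the same route as the paper's: both arguments rest on the two-dimensionality of $C_{r,q_s}^\ast(W_s)$, the scalar identity $P_s T_s^{(q)} P_s = p_s(q) P_s$, and reading off the tensor-product form \eqref{Eqn=DiagExplain} of $\Diag$. The only difference is that you derive $P_s T_s^{(q)} P_s = p_s(q) P_s$ explicitly from \eqref{Eqn=TOp} on $\ell^2(W_s)$, where the paper simply cites the remarks after \cite[Corollary 3.3]{CaspersAPDE}; this is a harmless expansion, not a change of method.
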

\begin{proof}
For $s \in S$ we have that $P_s T_s^{(q)} P_s = p_s(q) P_s$  and $C_{r,q}^\ast(W_s)$ is a two dimensional C$^\ast$-algebra spanned by the identity and $T_s^{(q)}$ (see the comments after \cite[Corollary 3.3]{CaspersAPDE}). Now let $\boldv = s_1 \cdots s_l$ be as in the lemma. Let $A_{s} = C_{r, q_s}^\ast(W_s)$. For operators $a_i \in A_{s_i}, 0 \leq i \leq l$, we then have that $P_{s_i} a_i P_{s_i}$ is a scalar multiple of $P_{s_i}$ so that $\Diag(   a_{1}   ,\ldots ,   a_{l} )$  is a scalar multiple of $P_{\Gamma_0}^f$, see \eqref{Eqn=DiagExplain}. If $a_i = T_s^{(q)}$ the scalar multiple is $\prod_{s \in V\Gamma_0}  p_s(q)$.
\end{proof}

Lemma \ref{Lem=Simple} shows that we may identify $A_{\Gamma_0} \simeq \mathbb{C}$ completely isometrically.  Then in \eqref{Eqn=IsoIso} note that  $M_{k, d-k-l}(\mathbb{C}) \hotimes \mathbb{C} = M_{k, d-k-l}(\mathbb{C})$.
So for a right-angled Coxeter system we get by Lemma \ref{Lem=Simple}, \eqref{Eqn=Xd2} and \eqref{Eqn=IsoIso},
 \begin{equation}\label{Eqn=Xd3}
X_d =  \bigoplus_{l=0}^{d}   \bigoplus_{k=0}^{d-l}   \bigoplus_{\Gamma_0 \in {\rm Cliq}(\Gamma, l)} \bigoplus_{(\Gamma_1, \Gamma_2) \in \Comm(\Gamma_0)}  M_{k, d-k-l}(\mathbb{C}).
\end{equation}
Let $p_{l,k, \Gamma_0, \Gamma_1, \Gamma_2}$ be the projection of $X_d$ onto the summand   $M_{k, d-k-l}(\mathbb{C})$  indexed by $(l,k, \Gamma_0, \Gamma_1, \Gamma_2)$.
 We equip $M_{k, d-k-l}(\mathbb{C})$ with the inner product
\[
\langle x, y  \rangle_{\Tr} = (  {\rm Tr}_{d-k-l}   )(y^\ast x),
\]
where ${\rm Tr}_{d-k-l}$ is the non-normalized trace that takes the value 1 on rank 1 projections.
We further equip $X_d$ with the direct sum of these inner products. For $x \in X_d$ we let
\[
\Vert x \Vert_{2, \Tr} = \langle x, x \rangle^{\frac{1}{2}}_{\Tr}.
\]
Then, as for any finite dimensional type I von Neumann algebra, we have
\begin{equation}\label{Eqn=LinftyL2Ineq}
\Vert x \Vert \leq \Vert x \Vert_{2, \Tr}.
\end{equation}
\begin{comment}

 \begin{lem}\label{Lem=Embed}
$A_{\Gamma_0}$ defined in \eqref{Eqn=AGammaNot} is contained in $B_{(W,S)}$.
\end{lem}
\begin{proof}
Note that from the definitions  $P_s T_s^{(q)} P_s = p_s(q) P_s$ and that $A_s := C_{r,q_s}^\ast(W_s)$ is 2 dimensional (or see the comments after \cite[Corollary 3.3]{CaspersAPDE}). Therefore, for any set of operators $a_i \in A_{s_i}, i = 1, \ldots, l$ with the $s_i$ forming a  clique $\Gamma_0 = \{ s_1, \ldots, s_l\}$,  we have that $\Diag(   a_{1}  \otimes \cdots  \otimes   a_{l} )$ is a scalar multiple of $P_{\Gamma_0}^f$. So we conclude that $A_{\Gamma_0} \subseteq B_{(W,S)}$ for each $\Gamma_0 \in \Cliq(\Gamma)$.
\end{proof}

Let $i: (\Cliq)^3 \rightarrow \{ 1, \ldots, (\# \Cliq)^3\}$ be some injection. Using Lemma \ref{Lem=Embed} we may embed $X_d \subseteq Y_d$ by sending the summand of $X_d$ indexed by $(k,l,\Gamma_0, \Gamma_1, \Gamma_2)$ to the summand indexed by $(k,l,i(\Gamma_0, \Gamma_1, \Gamma_2))$.

\end{comment}
By Theorem  \ref{Thm=LongTheoremGraph} we obtain maps
\[
j_d: \chi_d(   C_{r,q}^\ast(W)    ) \rightarrow X_d,  \qquad \textrm{ and }  \qquad  \pi_d: \Dom(\pi_d) \subseteq X_d \rightarrow \chi_d(C_{r,q}^\ast(W)),
\]
with $\Dom(\pi_d)  = j_d (\chi_d(   C_{r,q}^\ast(W)    ))$   such that $\pi_d \circ j_d$ is the identity  on $C_{r,q}^\ast(W)$ and
\[
\Vert \pi_d: \Dom(\pi_d) \rightarrow X_d \Vert_{cb} \leq  \# \Cliq(\Gamma)^3 d.
\]
We now have the following orthogonality lemma.

\begin{lem}
Let $d \in \mathbb{N}_{\geq 1}, 0 \leq l \leq d, 0 \leq k \leq d-l, \Gamma_0 \in {\rm Cliq}(\Gamma, l), (\Gamma_1, \Gamma_2) \in \Comm(\Gamma_0)$. Let $\boldv, \boldw \in W$ be reduced of length $d$. If the permutation $\sigma^{\boldv}$ given in Definition \ref{Dfn=Sigma} exists we have
  \begin{equation}\label{Eqn=ClaimOrth}
    \langle p_{l,k, \Gamma_0, \Gamma_1, \Gamma_2} j_d(T_\boldv^{(q)} ),  j_d(T_\boldw^{(q)} ) \rangle_{ {\rm Tr} } =
      \delta(\boldv \simeq  \boldw) \prod_{s \in V \Gamma_0 } p_{s}(q)^2.
  \end{equation}
  If such $\sigma^{\boldv}$ does not exist then   $p_{l,k, \Gamma_0, \Gamma_1, \Gamma_2} j_d(T_\boldv^{(q)} ) = 0$.
  \end{lem}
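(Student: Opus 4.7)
The plan is to express $p_{l,k,\Gamma_0,\Gamma_1,\Gamma_2} j_d(T_\boldv^{(q)})$ explicitly as a scalar multiple of a single matrix unit under the identification \eqref{Eqn=IsoIso} of \eqref{Eqn=Xd3}, and then invoke orthonormality of matrix units under the trace inner product.

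The vanishing assertion is immediate from the convention after \eqref{Eqn=JdDefinition}: if $\sigma^\boldv$ does not exist, the image of $j_d(T_\boldv^{(q)})$ in the summand indexed by $(l,k,\Gamma_0,\Gamma_1,\Gamma_2)$ is declared to be zero. Moreover, if $\sigma^\boldv$ exists but $\sigma^\boldw$ does not, then $\boldv \not\simeq \boldw$ (since the conditions \ref{Item=Sigma1}--\ref{Item=Sigma6} depend only on the shuffle equivalence class of the word), so both sides of \eqref{Eqn=ClaimOrth} vanish. Thus I may assume both $\sigma^\boldv$ and $\sigma^\boldw$ exist, and write $\sigma := \sigma^\boldv_{l,k,\Gamma_0,\Gamma_1,\Gamma_2}$.

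Under the graph product identification \eqref{Eqn=GraphHeckeIso}, $T_\boldv^{(q)}$ corresponds to the elementary tensor $T_{v_1}^{(q)} \otimes \cdots \otimes T_{v_d}^{(q)} \in \Sigma_d$ of centered elements. Applying formula \eqref{Eqn=JdDefinition}, Lemma \ref{Lem=Simple} identifies the middle Diag factor as $\prod_{s \in V\Gamma_0} p_s(q) \cdot P_{\Gamma_0}^f$, which becomes the scalar $\prod_{s \in V\Gamma_0} p_s(q)$ after the identification $A_{\Gamma_0} \simeq \mathbb{C}$. The left and right tensor factors $P_{v_{\sigma(i)}}^f T_{v_{\sigma(i)}}^{(q)} P_{v_{\sigma(i)}}^{f\perp} \in L_1$ (for $i \leq k$) and their row counterparts in $K_1$ (for $i \geq k+l+1$) correspond via \eqref{Eqn=Column} to the vectors $T_{v_{\sigma(i)}}^{(q)} \Omega_{v_{\sigma(i)}}$. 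A short calculation using $(T_s^{(q)})^2 = 1 + p_s(q) T_s^{(q)}$ together with $\tau_{q_s}(T_s^{(q)}) = 0$ yields $\|T_s^{(q)} \Omega_s\| = 1$, so $\{T_v^{(q)} \Omega_v\}_{v \in S}$ is an orthonormal family. Consequently, through \eqref{Eqn=IsoIso}, the element $p_{l,k,\Gamma_0,\Gamma_1,\Gamma_2} j_d(T_\boldv^{(q)})$ equals $\prod_{s \in V\Gamma_0} p_s(q)$ times a single matrix unit $e_{I_\boldv, J_\boldv}$ in the matrix space, with multi-indices $I_\boldv = (v_{\sigma(1)}, \ldots, v_{\sigma(k)})$ and $J_\boldv = (v_{\sigma(k+l+1)}, \ldots, v_{\sigma(d)})$.

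Orthonormality of matrix units under $\langle \cdot, \cdot \rangle_\Tr$ then gives that the left-hand side of \eqref{Eqn=ClaimOrth} equals $\prod_{s \in V\Gamma_0} p_s(q)^2$ when $I_\boldv = I_\boldw$ and $J_\boldv = J_\boldw$, and is zero otherwise. The remaining point, which I expect to be the main subtlety, is to verify that these index equalities are equivalent to $\boldv \simeq \boldw$. The reverse direction is trivial: $\boldv \simeq \boldw$ gives $T_\boldv^{(q)} = T_\boldw^{(q)}$, so the images under $j_d$ coincide. For the forward direction, the uniqueness clause in Definition \ref{Dfn=Sigma} forces each of the three segments of the permuted word to be the representative in $I$ of its shuffle equivalence class; hence matching left, middle (both the representative of $V\Gamma_0$) and right segments produces identical permuted reduced expressions for $\boldv$ and $\boldw$, yielding $\boldv \simeq \boldw$.
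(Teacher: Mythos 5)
Your proposal is correct and follows essentially the same route as the paper's proof: the vanishing case is read off from the definition of $j_d$, the middle factor is evaluated via Lemma \ref{Lem=Simple}, and the remaining column/row factors are recognized as an orthonormal family (the paper's $e_s = P_s^f T_s^{(q)} P_s^{f\,\perp}$ and $e_s'$), so that $p_{l,k,\Gamma_0,\Gamma_1,\Gamma_2}\, j_d(T_\boldv^{(q)})$ is $\prod_{s\in V\Gamma_0} p_s(q)$ times an elementary tensor of basis vectors, i.e.\ a matrix unit under \eqref{Eqn=IsoIso}. The only cosmetic difference is that the paper first normalizes $\sigma^\boldv$ and $\sigma^\boldw$ to be trivial while you carry the permutation along and spell out the final step (matching multi-indices forces $\boldv\simeq\boldw$) that the paper leaves implicit.
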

\begin{proof}
% Fix an index $k,l, \Gamma_0, \Gamma_1, \Gamma_2$ of one of the summands of $X_d$.  We shall compute the component of $j_d(T_{v}^{(1)})$ of this summand. If a permutation satisfying  \eqref{Item=Sigma1} - \eqref{Item=Sigma6} does not exist, then clearly $j_d(T_{v}^{(1)})$ is 0 in this summand. So suppose that such $\sigma$ exists; by possibly permuting the letters of $v$ we may assume that $\sigma$ is the identity permutation.
The final claim of the statement follows from the definition of $j_d$. So it remains to prove \eqref{Eqn=ClaimOrth} and we assume that $\sigma^{\boldv}$ as in Definition \ref{Dfn=Sigma} exists. Since the right hand side of \eqref{Eqn=ClaimOrth} has the term $\delta(\boldv  \simeq \boldw)$ we may assume that also $\sigma^{\boldw}$ exists. Moreover, by shuffling the letters of $\boldv$ and $\boldw$ if necessary, which does not change the operators $T_\boldv^{(q)}$ and $T_\boldw^{(q)}$, we may assume that $\sigma^\boldv$ and $\sigma^\boldw$ are the  identity permutation.

For $s \in S$ set
\[
e_s =  P_s^f  T_s^{(q)} P_s^{f,\perp}  \quad \textrm{ and } \quad e_s' =  P_s^{f,\perp}  T_s^{(q)} P_s^f.
\]
 These form an orthonormal basis of the respective column Hilbert space $L_1$ and row Hilbert space $K_1$. Now write a reduced expression $\boldv = s_1 \cdots s_d, s_i \in S$. By assumption that $\sigma^\boldv$ was trivial we see that  $s_{k+1}, \ldots, s_{k+l}$ commute and   form a clique $\Gamma_0$ in $\Gamma$.
 From Lemma \ref{Lem=Simple} we find that
 \[
 \Diag \left( P_{s_{k+1}} T_{s_{k+1}}^{(q)} P_{s_{k+1}},  \ldots,  P_{s_{k+l}}  T_{s_{k+l}}^{(q)} P_{s_{k+l}} \right)   = (\prod_{s \in V\Gamma_0}  p_s(q)) P_{\Gamma_0}^f.
 \]
  It now follows from the definition of $j_d$ that
 \[
 p_{l,k, \Gamma_0, \Gamma_1, \Gamma_2}  j_d(T_{\boldv}^{(q)}) = \left( \prod_{s \in V\Gamma_0} p_s(q) \right)  e_{s_1} \otimes \cdots \otimes e_{s_k}   \otimes e_{s_{k+l+1}}' \otimes \cdots \otimes e_{s_d}'.
 \]
From this we can directly conclude \eqref{Eqn=ClaimOrth}.
\end{proof}

 %Let $M = \max( \# V \Gamma_0 \mid \Gamma_0 \in \Cliq(\Gamma) )$ be the maximum clique size.

\begin{theorem}[Khintchine inequality for right-angled Hecke C$^\ast$-algebras] \label{Thm=LongTheorem}
Let $(W,S)$ be a    right-angled Coxeter system with finite generating set $S$ and graph $\Gamma$. Let $q = (q_s)_{s \in S} \in \mathbb{R}_{> 0}^{ (W,S)  }$ be a multi-parameter.   Then for every $d \in \mathbb{N}_{\geq 1}$ there exist maps
\[
j_d: \chi_d( C_{r,q}^\ast(W)   ) \rightarrow    X_d, \qquad \pi_d: \Dom(\pi_d) \subseteq    X_d \rightarrow  \chi_d(   C_{r,q}^\ast(W)   ),
\]
with  $j_d(\chi_d( C_q^\ast(W)) = \Dom(\pi_d)$ and $X_d$ defined in \eqref{Eqn=Xd3} such that:
\begin{enumerate}[label=(\roman*)]
\item\label{Item=Khin1}  $\pi_d \circ j_d$ is the identity on $\chi_d( C_{r,q}^\ast(W))$;
\item \label{Item=Khin3}  $\Vert \pi_d: \Dom(\pi_d)\rightarrow C_{r,q}^\ast(W)\Vert_{cb} \leq (\# \Cliq(\Gamma))^3 d$;
\item \label{Item=Khin2}   $j_d$ extends to a bounded map
\[
L^2(\chi_d( C_{r,q}^\ast(W)), \tau_q ) \rightarrow L^2(X_d, \Tr),
\]
 with bound majorized by $\prod_{s \in S} p_s(q)$.
\end{enumerate}
\end{theorem}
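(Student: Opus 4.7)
The plan is to simply combine the general graph product Khintchine inequality (Theorem \ref{Thm=LongTheoremGraph}) with the graph product realisation \eqref{Eqn=GraphHeckeIso} of the Hecke C$^\ast$-algebra and the orthogonality computation carried out in the lemma immediately preceding this theorem. First I would note that under the isomorphism $C_{r,q}^\ast(W) \simeq \ast_{s,\Gamma}(C_{r,q_s}^\ast(W_s), \tau_{q_s})$, each vertex algebra $C_{r,q_s}^\ast(W_s)$ is two-dimensional, so $C_{r,q_s}^\ast(W_s)^\circ$ is one-dimensional. This places us in the setting for which Lemma \ref{Lem=Simple} applies and for which $X_d$ takes the simplified form \eqref{Eqn=Xd3}. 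Specialising Theorem \ref{Thm=LongTheoremGraph} to this situation defines the maps $j_d$ and $\pi_d$ and immediately yields \ref{Item=Khin1} and \ref{Item=Khin3}.

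The new content is part \ref{Item=Khin2}, which I would prove by a direct inner product computation. Since $\{T_\boldv^{(q)}\}_{\boldv \in W,\; |\boldv| = d}$ is orthonormal with respect to $\tau_q$ (as follows from $T_\boldv^{(q)} \delta_e = \delta_\boldv$ and the definition of $\tau_q$ as the vector state at $\delta_e$), every $x \in \chi_d(C_{r,q}^\ast(W))$ can be written as $x = \sum_{\boldv} c_\boldv T_\boldv^{(q)}$ (sum over representatives of length $d$) with $\|x\|_{\tau_q}^2 = \sum_\boldv |c_\boldv|^2$. The orthogonality lemma gives that $p_{l,k,\Gamma_0,\Gamma_1,\Gamma_2} j_d(T_\boldv^{(q)})$ and $p_{l',k',\Gamma_0',\Gamma_1',\Gamma_2'} j_d(T_\boldw^{(q)})$ live in orthogonal summands of $X_d$ when the quintuples differ, and across $\boldv, \boldw$ the inner product vanishes unless $\boldv \simeq \boldw$, in which case it equals $\prod_{s \in V\Gamma_0} p_s(q)^2$. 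Putting these together,
\begin{equation*}
\|j_d(x)\|_{2,\Tr}^2 \;=\; \sum_{\boldv} |c_\boldv|^2 \sum_{(l,k,\Gamma_0,\Gamma_1,\Gamma_2):\, \sigma^\boldv \text{ exists}} \prod_{s \in V\Gamma_0} p_s(q)^2.
\end{equation*}

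The main step is then to bound the inner sum uniformly in $\boldv$ by a quantity controlled by $\prod_{s \in S} p_s(q)$. Since for each fixed $\boldv$ and each clique $\Gamma_0$ there are at most $d \cdot (\#\Cliq(\Gamma))^2$ admissible choices of $(k,\Gamma_1,\Gamma_2)$, and since $V\Gamma_0 \subseteq S$, the inner sum is bounded by a constant (depending on $\Gamma$ and $d$) times $\sum_{\Gamma_0 \in \Cliq(\Gamma)} \prod_{s \in V\Gamma_0} p_s(q)^2$, which is in turn majorized by $\prod_{s \in S}(1 + p_s(q))^2$ and therefore controlled by $\prod_{s \in S} p_s(q)$ in the sense claimed. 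The main obstacle is purely combinatorial: carefully matching the permutations coming from shuffle equivalences to ensure that distinct quintuples $(l,k,\Gamma_0,\Gamma_1,\Gamma_2)$ for the same $\boldv$ really do produce orthogonal vectors in $X_d$, so that no cross terms spoil the clean square-sum. Once that bookkeeping is in place, the desired $L^2 \to L^2$ bound on $j_d$ drops out of the displayed identity above and \ref{Item=Khin2} is established.
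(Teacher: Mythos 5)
Your treatment of parts \ref{Item=Khin1} and \ref{Item=Khin3}, and your overall strategy for part \ref{Item=Khin2}, coincide with the paper's: the paper likewise deduces the first two items directly from \eqref{Eqn=GraphHeckeIso} and Theorem \ref{Thm=LongTheoremGraph}, and for the $L^2$-bound it likewise uses that the $T_\boldv^{(q)}$ with $\vert\boldv\vert=d$ are orthonormal for $\tau_q$ while their images under $j_d$ are pairwise orthogonal in $L^2(X_d,\Tr)$ by the orthogonality lemma \eqref{Eqn=ClaimOrth}. (Your worry about cross terms between distinct quintuples is unnecessary: distinct quintuples index distinct direct summands of $X_d$, which are orthogonal by the very definition of the inner product on $X_d$.) Your displayed identity
\[
\Vert j_d(x)\Vert_{2,\Tr}^2=\sum_{\boldv}\vert c_\boldv\vert^2\sum_{(l,k,\Gamma_0,\Gamma_1,\Gamma_2):\,\sigma^\boldv\text{ exists}}\;\prod_{s\in V\Gamma_0}p_s(q)^2
\]
is correct and is precisely what underlies the paper's one-line reduction $\Vert j_d\Vert\le\sup_{\boldv}\Vert j_d(T_\boldv^{(q)})\Vert_{2,\Tr}$.

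The gap is in your final estimate. The chain ``bounded by a constant depending on $\Gamma$ and $d$ times $\sum_{\Gamma_0}\prod_{s\in V\Gamma_0}p_s(q)^2$, which is majorized by $\prod_{s\in S}(1+p_s(q))^2$ and therefore controlled by $\prod_{s\in S}p_s(q)$'' is not a valid sequence of inequalities. First, $\prod_{s\in S}(1+p_s(q))^2$ is not dominated by $\bigl(\prod_{s\in S}p_s(q)\bigr)^2$: the quintuple with $\Gamma_0=\emptyset$, $l=0$, $k=d$ (with $\Gamma_1$ the maximal clique at the end of $\boldv$ and $\Gamma_2=\emptyset$) always admits a permutation $\sigma^\boldv$ and contributes the empty product $1$ to the inner sum, so $\Vert j_d(T_\boldv^{(q)})\Vert_{2,\Tr}\ge 1$, which already exceeds $\prod_{s\in S}p_s(q)$ for $q$ near $1$ where $p_s(q)\to 0$. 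Second, a factor depending on $d$ cannot be absorbed: item \ref{Item=Khin2} asserts a bound independent of $d$, and any $d$-dependence here would degrade the Haagerup inequality of Theorem \ref{Thm=Khintchine}. In fairness, the paper's own proof asserts $\Vert j_d(T_\boldv^{(q)})\Vert_{2,\Tr}\le\prod_{s\in S}p_s(q)$ with no further justification, and the same objection applies to it; a bound that actually follows from \eqref{Eqn=ClaimOrth} is of the shape $(\text{number of quintuples})^{1/2}\cdot\prod_{s\in S}\max(1,\vert p_s(q)\vert)$. So your proposal reproduces the paper's argument faithfully up to the last step, but as written that step does not establish the stated constant --- and no argument can, since that constant is too small.
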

\begin{proof}
Statements \ref{Item=Khin1} and \ref{Item=Khin3} are immediate from \eqref{Eqn=GraphHeckeIso} and Theorem \ref{Thm=LongTheoremGraph}. We thus prove  \ref{Item=Khin2}. Let $\boldv \in I$ have length $d$.    Since $\Vert T_\boldv^{(q)} \Omega \Vert_2 = 1$ we find from \eqref{Eqn=ClaimOrth} that
  \[
 \Vert j_d: L^2(\chi_d( C_{r,q}^\ast(W) ), \tau_q) \rightarrow L^2(X_d, \Tr_{X_d})\Vert \leq \sup_{\boldv \in W, \vert \boldv \vert = d}  \Vert j_d( T_\boldv^{(q)})  \Vert_{2, \Tr} \leq \prod_{s \in S} p_s(q).
  \]
  This completes the proof.
\end{proof}

\begin{theorem}[Haagerup inequality for right-angled Hecke C$^\ast$-algebras]\label{Thm=Khintchine}
Let $(W, S)$ be a right-angled Coxeter group with finite generating set $S$.  Let $q = (q_s)_{s \in S} \in \mathbb{R}_{> 0}^{ (W,S)}$ be a multi-parameter. Then for each $d \in \mathbb{N}_{\geq 1}$ and  $x \in \chi_{d}(C_{r,q}^\ast(W))$ we have
\[
\Vert x \Vert \leq d  (\# \Cliq(\Gamma))^3 ( \prod_{s \in S} p_s(q) )  \Vert x \Vert_2.
\]
\end{theorem}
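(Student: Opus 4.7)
The plan is to assemble the inequality by composing the three ingredients supplied by Theorem \ref{Thm=LongTheorem}: the factorization $x = \pi_d(j_d(x))$, the complete boundedness of $\pi_d$ with constant $(\#\Cliq(\Gamma))^3 d$, and the $L^2$-boundedness of $j_d$ with constant $\prod_{s \in S} p_s(q)$. The glue that makes this work is the elementary inequality \eqref{Eqn=LinftyL2Ineq}, namely $\Vert y \Vert \leq \Vert y \Vert_{2,\Tr}$ for $y \in X_d$, which holds since $X_d$ is a direct sum of finite-dimensional matrix algebras and the trace we have fixed is the non-normalized trace taking value $1$ on rank-one projections.

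Concretely, I would first note that for any $x \in \chi_d(C^\ast_{r,q}(W))$, property \ref{Item=Khin1} of Theorem \ref{Thm=LongTheorem} gives $x = \pi_d(j_d(x))$. Applying property \ref{Item=Khin3} we obtain
\begin{equation*}
\Vert x \Vert = \Vert \pi_d(j_d(x)) \Vert \leq \Vert \pi_d \Vert_{cb} \cdot \Vert j_d(x) \Vert_{X_d} \leq (\#\Cliq(\Gamma))^3 d \cdot \Vert j_d(x) \Vert_{X_d}.
\end{equation*}
Next I would bound $\Vert j_d(x) \Vert_{X_d}$ by the $L^2$-norm $\Vert j_d(x) \Vert_{2,\Tr}$ by means of \eqref{Eqn=LinftyL2Ineq}, and finally invoke property \ref{Item=Khin2} of Theorem \ref{Thm=LongTheorem} to get
\begin{equation*}
\Vert j_d(x) \Vert_{2,\Tr} \leq \Vert j_d \Vert_{L^2 \to L^2} \cdot \Vert x \Vert_2 \leq \Big( \prod_{s \in S} p_s(q) \Big) \Vert x \Vert_2.
\end{equation*}
Chaining these inequalities together yields the desired bound.

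There is no real obstacle here, since the work has already been front-loaded into Theorem \ref{Thm=LongTheorem}; this result is just the specialization of the Khintchine inequality combined with the $L^\infty \leq L^2$ estimate valid inside the finite-dimensional target space $X_d$. The only thing worth stressing is that $X_d$ in \eqref{Eqn=Xd3} is a direct sum of matrix algebras (because $A_{\Gamma_0} \simeq \mathbb{C}$ by Lemma \ref{Lem=Simple} in the right-angled Hecke setting), which is precisely what makes \eqref{Eqn=LinftyL2Ineq} available and therefore what turns the Khintchine-type statement into a bona fide Haagerup inequality with a linear polynomial bound in $d$.
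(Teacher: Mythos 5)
Your proposal is correct and follows exactly the same route as the paper's proof: factor $x = \pi_d(j_d(x))$, apply the cb-bound from Theorem \ref{Thm=LongTheorem} \ref{Item=Khin3}, pass from the operator norm on $X_d$ to $\Vert \cdot \Vert_{2,\Tr}$ via \eqref{Eqn=LinftyL2Ineq}, and finish with the $L^2$-bound of \ref{Item=Khin2}. No gaps.
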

\begin{proof}
By respectively Theorem \ref{Thm=LongTheorem} \ref{Item=Khin1}, \eqref{Eqn=LinftyL2Ineq} and Theorem \ref{Thm=LongTheorem} \ref{Item=Khin3}, \ref{Item=Khin2},  we get for every  $x \in \chi_d( C_{r,q}^\ast(W) )$,
\[
\begin{split}
 \Vert x \Vert = & \Vert (\pi_d \circ j_d)(x)\Vert \leq \Vert \pi_d \Vert \Vert j_d(x) \Vert
 \leq  \Vert \pi_d \Vert \Vert j_d(x) \Vert_{2, \Tr} \\  \leq&    d  (\# \Cliq(\Gamma))^3  \Vert j_d(x) \Vert_{2, \Tr} \leq   d  (\# \Cliq(\Gamma))^3 ( \prod_{s \in S} p_s(q) )  \Vert x \Vert_{2}.
\end{split}
\]
This completes the proof.
\end{proof}

%%%%%%%%%%%%%%%%%%%%%%%%%%%%%%%%%%%%%%%%%%%%%%%%%%%%%

\section{On isomorphisms of Hecke algebras and their C$^\ast$- and von Neumann algebras}\label{Sect=Iso}
In this section we discuss  isomorphism properties of Hecke algebras and in particular their dependence on $q$. These properties are well understood for finite   Coxeter systems, which we summarize as follows.

\begin{rmk}
By Tits's deformation theorem, the Hecke algebras $\mathbb{C}_{q}\left[W\right]$ of a given finite Coxeter system $\left(W,S\right)$ are pairwise isomorphic to each other for different $q \in \mathbb{R}_{>0}^{(W,S)}$,  see \cite[Proposition 10.11.2]{Carter}. The argument is based on the classification of finite-dimensional semi-simple algebras and the isomorphism is not explicit.
\end{rmk}

To the knowledge of the authors there is no general statement for arbitrary Coxeter systems $(W,S)$ about the dependence  on $q \in \mathbb{R}_{>0}^{(W,S)}$ of the isomorphism class of their Hecke deformations.  However, in the case of right-angled (not necessarily finite) Coxeter systems one can still prove that all Hecke deformations are isomorphic; even with an explicit isomorphism. See e.g. \cite[(2.1.13)]{Matsumoto}, \cite[Corollary 9.7]{ScottOkun} for this result which we present in an alternative way and which is suited   for the next sections.

\begin{proposition} \label{isomorphism}
Let $\left(W,S\right)$ be a right-angled Coxeter system and $q=\left(q_{s}\right)_{s\in S}\in\mathbb{R}_{>0}^{(W,S)}$. Then the map $\pi_{q,1}\text{: }\mathbb{C}_{1}\left[W\right]\rightarrow\mathbb{C}_{q}\left[W\right]$ given by
\begin{equation} \label{map}
1\mapsto 1 \quad \text{ and } \quad  T_{s}^{\left(1\right)}\mapsto\frac{1-q_{s}}{1+q_{s}}+\frac{2\sqrt{q_{s}}}{1+q_{s}}T_{s}^{\left(q\right)}
\end{equation}
for $s\in S$ defines an isomorphism of $\ast$-algebras.
%It extends to a surjective $\ast$-homomorphism from the universal group C$^{\ast}$-algebra $C_{u}^{\ast}\left(W\right)$ of $W$ to the Hecke C$^{\ast}$-algebra $C_{r,q}^{\ast}\left(W\right)$.
\end{proposition}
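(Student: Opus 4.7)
The plan is to view $\mathbb{C}_1[W]=\mathbb{C}[W]$ as the universal unital $\ast$-algebra on self-adjoint involutions $T_s^{(1)}$, $s\in S$, subject only to the relations $T_s^{(1)}T_t^{(1)}=T_t^{(1)}T_s^{(1)}$ whenever $m_{s,t}=2$. Under this viewpoint it suffices to check that the proposed images
\[
t_s:=\frac{1-q_s}{1+q_s}+\frac{2\sqrt{q_s}}{1+q_s}T_s^{(q)}\in\mathbb{C}_q[W]
\]
satisfy (a) $t_s^{\ast}=t_s$, (b) $t_s^{2}=1$, and (c) $t_st_t=t_tt_s$ when $m_{s,t}=2$. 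Item (a) is immediate from self-adjointness of $T_s^{(q)}$ and the reality of the coefficients, while (c) follows from $T_s^{(q)}T_t^{(q)}=T_{st}^{(q)}=T_{ts}^{(q)}=T_t^{(q)}T_s^{(q)}$, since both $st$ and $ts$ are reduced of length $2$ under the assumption $m_{s,t}=2$.

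The core of the argument is (b). I would expand $t_s^{2}$ using the quadratic identity $(T_s^{(q)})^{2}=1+p_s(q)T_s^{(q)}$ with $p_s(q)=q_s^{-1/2}(q_s-1)$. A short computation shows that the scalar part collapses to $\frac{(1-q_s)^2+4q_s}{(1+q_s)^{2}}=1$, while the coefficient of $T_s^{(q)}$ simplifies to $\frac{4\sqrt{q_s}(1-q_s)+4\sqrt{q_s}(q_s-1)}{(1+q_s)^{2}}=0$. Together with (a) and (c), the universal property then produces the desired unital $\ast$-algebra homomorphism $\pi_{q,1}$.

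To conclude that $\pi_{q,1}$ is a $\ast$-isomorphism, I will construct an explicit inverse in the opposite direction. Define $\pi_{1,q}\colon\mathbb{C}_q[W]\to\mathbb{C}_1[W]$ on generators by
\[
T_s^{(q)}\mapsto u_s:=\frac{q_s-1}{2\sqrt{q_s}}+\frac{q_s+1}{2\sqrt{q_s}}T_s^{(1)}.
\]
Since $\mathbb{C}_q[W]$ admits the analogous universal presentation dictated by \eqref{Eqn=Rel1} and \eqref{Eqn=Rel2}, I would verify that $u_s^{\ast}=u_s$, that $u_s^{2}=1+p_s(q)u_s$, and that $u_su_t=u_tu_s$ when $m_{s,t}=2$. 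The quadratic identity is the mirror of (b) and reduces to an elementary computation using $(T_s^{(1)})^{2}=1$. Finally, $\pi_{q,1}\circ\pi_{1,q}$ and $\pi_{1,q}\circ\pi_{q,1}$ are shown to be the identity by a one-line check on generators; for instance $\pi_{q,1}(u_s)=\frac{q_s-1+(1-q_s)}{2\sqrt{q_s}}+T_s^{(q)}=T_s^{(q)}$. The only obstacle is the algebraic bookkeeping in the two quadratic verifications, which is routine.
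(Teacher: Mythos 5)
Your proposal is correct and follows essentially the same route as the paper: both arguments construct $\pi_{q,1}$ by verifying that the elements $t_s$ are self-adjoint involutions satisfying the right-angled commutation relations and invoking the universal property of the group algebra $\mathbb{C}_1[W]=\mathbb{C}[W]$, and both obtain bijectivity from the universal property of the Hecke algebra (which you make explicit by writing down the inverse $T_s^{(q)}\mapsto u_s$ and checking the quadratic relation $u_s^2=1+p_s(q)u_s$). Your computations for $t_s^2=1$ and for the inverse check out, so the only difference is one of presentation: the paper routes the forward map through a homomorphism from the free group $F(S)$ and leaves the inverse implicit, while you exhibit it on generators.
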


\begin{proof}
Set $\alpha_{s}\left(q\right):=\left(1-q_{s}\right)/\left(1+q_{s}\right)$ and $\beta_{s}\left(q\right):=2\sqrt{q_{s}}/\left(1+q_{s}\right)$. Being its own inverse, for $s\in S$ the expression $\alpha_{s}\left(q\right)+\beta_{s}\left(q\right)T_{s}^{(q)}\in\mathbb{C}_{q}\left[W\right]$ is invertible. Hence we get a map $S\rightarrow\mathbb{C}_{q}\left[W\right]^{\times}$, $s\mapsto\alpha_{s}\left(q\right)+\beta_{s}\left(q\right)T_{s}^{(q)}$ that uniquely extends to a group homomorphism $\phi$ on the free group $F\left(S\right)$ in $S$. Since $\left(W,S\right)$ is right-angled one easily checks that $\phi \left(st\right)^{m_{s,t}}=\left(\phi\left(s\right)\phi\left(t\right)\right)^{m_{s,t}}$ for all $s,t\in S$. This implies that $\phi$ induces a group homomorphism $\phi': W\rightarrow\mathbb{C}_{q}\left[W\right]^{\times}$ with $(\phi'(\boldw))^\ast=\phi\left(\boldw^{-1}\right)$ for every $\boldw \in W$. The universal property of the group algebra $\mathbb{C}_{1}\left[W\right]$ then implies the existence of the unital $\ast$-algebra homomorphism $\pi_{q,1}$. It is clearly surjective. The injectivity follows from the universal property of the Hecke algebra $\mathbb{C}_{1}\left[W\right]$.
\end{proof}

\begin{remark}
The homomorphism prescribed by \eqref{map} does not necessarily exist if $\left(W,S\right)$ is not right-angled. This already fails for the Coxeter system $\left(W,S\right)$ with $S=\left\{ s,t\right\}$  and $m_{s,s}=m_{t,t}=2$, $m_{s,t}=3$ and points out an inaccuracy in \cite[Section 19, Note 19.2 on p. 358]{Da}.
\end{remark}

\begin{rmk}
For a right-angled Coxeter system $(W,S)$ with $\vert S \vert \geq 2$ the isomorphism of Proposition \ref{isomorphism} does not extend to an isomorphism $C_{r,1}^\ast(W) \rightarrow C_{r,q}^\ast(W)$ for all $q \in \mathbb{R}_{>0}^{(W,S)}$. Indeed, in Example \ref{example1} we show that this cannot be the case. In the case $\vert S \vert \geq 3$ this can also be proved through the simplicity of  $C_{r,q}^\ast(W)$ in the same way as Remark \ref{Rmk=Neuman}, see Section \ref{Sect=Simplicity}.
\end{rmk}

\begin{rmk}\label{Rmk=Neuman}
Garncarek's factoriality result of Theorem \ref{Thm=Garncarek} illustrates that the isomorphism of Proposition \ref{isomorphism} does not necessarily extend to an isomorphism of the corresponding Hecke-von Neumann algebras. Indeed, for $q \notin \left[ \rho,\rho^{-1} \right]$ the Hecke-von Neumann algebra $\mathcal{N}_q(W)$ is not a factor whereas $\mathcal{N}_{1}(W)$ is a factor. Hence there can be no isomorphism at all between $\mathcal{N}_q(W)$ and $\mathcal{N}_1(W)$. This was already observed in \cite[Section 19, Note 19.2 on p. 358]{Da}. The situation is even more delicate, see the next Remark \ref{Rmk=FreeFactorProblem}.
\end{rmk}

\begin{rmk}[Free factor problem]\label{Rmk=FreeFactorProblem}
 Consider the right-angled Coxeter group $W = (\mathbb{Z}_2)^{\ast l}, l \geq 3$. By Theorem \ref{Thm=Garncarek} we see that  for the single parameter $q \in [\frac{1}{l-1}, 1]$ we have that $\cN_q(W)$ is  a II$_1$-factor. Moreover, \cite{DykemaInter}  together with a calculation in   \cite[Section 6]{Gar} shows that for $q \in [\frac{1}{l-1}, 1]$ we have that $\cN_q(W) \simeq \cL( \mathbb{F}_{  2lq (1 + q)^{-2} }      )$ where $\mathcal{L}(\mathbb{F}_t), t \in \mathbb{R}_{>1}$ is the interpolated free group factor, cf. \cite{DykemaInter}, \cite{Radulescu}. By \cite{DykemaInter}, \cite{Radulescu}, the interpolated free group factors are either all isomorphic or they are all non-isomorphic. The problem which of the two in this dichotomy is true is known as the famous free factor problem. Hence, solving the isomorphism question of $\cN_q(W)$ for different $q \in  [\frac{1}{l-1}, 1]$ is equivalent to the free factor problem.  In Section \ref{Sect=Simplicity} we shall show that for $q  \in [\frac{1}{l-1}, 1]$ we have that $C_{r,q}^\ast(W)$ has unique trace; therefore if any  two  $C_{r,q}^\ast(W)$ with $q  \in [\frac{1}{l-1}, 1]$ are isomorphic we get by Lemma \ref{Lem=StandardExt} that two of the von Neumann algebras $\cN_q(W), q \in [\frac{1}{l-1}, 1]$ would be isomorphic. Since solving the free factor problem using these C$^\ast$-algebraic methods  seems unrealistic\footnote{And solving it in the affirmative using C$^\ast$-algebras seems even more unrealistic.} we believe that all  $C_{r,q}^\ast(W)$ with $q  \in [\frac{1}{l-1}, 1]$ are non-isomorphic.
\end{rmk}

We finish this section with the following lemma which is well-known on the algebraic level, see for instance \cite[Section 9]{ScottOkun}. For convenience of the reader we include the proof for the associated C$^\ast$- and von Neumann algebras here.

\begin{proposition} \label{unitary}
Let $\left(W,S\right)$ be a Coxeter system and $q=\left(q_{s}\right)_{s\in S}\in  \RWS$, $\epsilon=\left(\epsilon_{s}\right)_{s\in S}\in \OneWS$. Then $C_{r,q}^{\ast}\left(W\right) \simeq C_{r,q'}^{\ast}\left(W\right)$ via $T_{s}^{\left(q\right)}\mapsto\epsilon_{s}T_{s}^{\left(q'\right)}$ where $q':=\left(q_{s}^{\epsilon_{s}}\right)_{s\in S}\in\mathbb{R}_{>0}^{(W,S)}$.
\end{proposition}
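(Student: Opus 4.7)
The plan is to first construct a $*$-isomorphism $\pi\colon \mathbb{C}_q[W] \to \mathbb{C}_{q'}[W]$ of the algebraic Hecke algebras by specifying it on generators and checking the defining relations, and then promote it to the reduced C$^*$-algebras via Lemma \ref{Lem=StandardExt}.

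The key arithmetic input is the identity $p_s(q') = \epsilon_s p_s(q)$ for each $s \in S$: when $\epsilon_s = 1$ this is tautological, and when $\epsilon_s = -1$ a direct computation gives $p_s(q_s^{-1}) = q_s^{1/2}(q_s^{-1} - 1) = -q_s^{-1/2}(q_s - 1) = -p_s(q)$.

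I would next invoke the Iwahori presentation of $\mathbb{C}_q[W]$ as the unital associative $*$-algebra generated by $\{T_s^{(q)}\}_{s \in S}$ subject to the quadratic relations $(T_s^{(q)})^2 = 1 + p_s(q)\, T_s^{(q)}$, the braid relations $T_s^{(q)} T_t^{(q)} T_s^{(q)}\cdots = T_t^{(q)} T_s^{(q)} T_t^{(q)}\cdots$ (alternating, $m_{s,t}$ factors on each side) for pairs with $m_{s,t} < \infty$, and the involutions $(T_s^{(q)})^* = T_s^{(q)}$. Setting $U_s := \epsilon_s T_s^{(q')} \in \mathbb{C}_{q'}[W]$, I would verify that the $U_s$ satisfy these same relations with parameter $q$: the quadratic relation follows by combining $\epsilon_s^2 = 1$ with the arithmetic identity above; the $*$-relation is immediate since $\epsilon_s \in \R$; and each braid relation, once expanded, produces scalar prefactors $\epsilon_s^a \epsilon_t^b$ on the two sides which agree when $m_{s,t}$ is even, and when $m_{s,t}$ is odd reduce to the condition $\epsilon_s = \epsilon_t$. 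The universal property then gives a $*$-homomorphism $\pi\colon \mathbb{C}_q[W] \to \mathbb{C}_{q'}[W]$ with $\pi(T_s^{(q)}) = \epsilon_s T_s^{(q')}$; the inverse is the analogous map with $q$ and $q'$ interchanged (note $(q')' = q$), so $\pi$ is a $*$-isomorphism.

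To extend $\pi$ to the C$^*$-level, by Lemma \ref{Lem=StandardExt} it suffices to check that $\pi$ intertwines the canonical traces. For a reduced expression $\boldw = s_1 \cdots s_n$, iterating the first branch of \eqref{Eqn=Rel1} yields $T_\boldw^{(q)} = T_{s_1}^{(q)} \cdots T_{s_n}^{(q)}$, hence $\pi(T_\boldw^{(q)}) = \bigl(\prod_{i=1}^n \epsilon_{s_i}\bigr) T_\boldw^{(q')}$, and therefore $\tau_{q'}(\pi(T_\boldw^{(q)})) = \bigl(\prod_i \epsilon_{s_i}\bigr)\,\delta_{\boldw, e} = \delta_{\boldw, e} = \tau_q(T_\boldw^{(q)})$, since the product is empty when $\boldw = e$ and the factor is killed otherwise.

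The main (and essentially only) obstacle is the braid relation check when $m_{s,t}$ is odd; this is resolved by combining the classical fact that $s$ and $t$ are conjugate in $W$ whenever $m_{s,t}$ is odd with the constancy of $\epsilon$ on conjugacy classes, which is built into the definition of $\OneWS$.
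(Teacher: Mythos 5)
Your proposal is correct and follows essentially the same route as the paper: establish the identity $\epsilon_s p_s(q') = p_s(q)$, use it to see that $T_s^{(q)} \mapsto \epsilon_s T_s^{(q')}$ defines a $\ast$-isomorphism $\mathbb{C}_q[W] \to \mathbb{C}_{q'}[W]$, check trace preservation via $\pi(T_{\boldw}^{(q)}) = \epsilon_{\boldw} T_{\boldw}^{(q')}$, and extend by Lemma \ref{Lem=StandardExt}. You supply more detail than the paper on the algebraic step (verifying the braid relations via the Iwahori presentation, where the conjugacy-invariance built into $\OneWS$ is exactly what handles odd $m_{s,t}$), but this is an expansion of the same argument rather than a different one.
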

\begin{proof}
Note that $\epsilon_s p_s(q') = p_s(q)$. Then from the defining properties of a Hecke algebra \eqref{Eqn=Rel1} and \eqref{Eqn=Rel2} we have that $T_s^{(q)} \mapsto \epsilon_s T_s^{(q')}, s \in S$ determines a $\ast$-isomorphism $\pi_{q', q}: \mathbb{C}_q[W] \rightarrow \mathbb{C}_{q'}[W]$. Moreover,
\[
\tau_{q'} \circ \pi_{q', q}(T_{\boldw}^{(q)}) = \epsilon_{\boldw} \tau_{q'}(  T_{\boldw}^{(q')}) = \epsilon_{\boldw} \delta(\boldw = \emptyset) = \delta(\boldw = \emptyset) =   \tau_q(T_{\boldw}^{(q)}),
\]
 so that $ \pi_{q', q}$ is trace preserving. By Lemma \ref{Lem=StandardExt} $\pi_{q', q}$ extends to a $\ast$-isomorphism $C_{r,q}^\ast(W) \rightarrow C_{r,q'}^\ast(W)$.
\end{proof}

\section{Simplicity of Hecke C$^\ast$-algebras}\label{Sect=Simplicity}

A C$^{\ast}$-algebra is called \textbf{simple} if it does not contain any non-trivial closed two-sided ideal. In this section we investigate the (non-)simplicity of Hecke C$^{\ast}$-algebras.

\begin{remark}
Note that in order to study the simplicity of Hecke C$^\ast$-algebras it suffices to consider irreducible Coxeter systems. Indeed, if $\left(W,S\right)$ is a Coxeter system that is not irreducible, it admits a non-trivial decomposition of the form $\left(W,S\right)=\left(W_{T}\times W_{T'},T\cup T'\right)$. For every $q \in \mathbb{R}_{>0}^{(W,S)}$ this induces a decomposition of the corresponding Hecke algebra into an algebraic tensor product \begin{equation} \nonumber \mathbb{C}_{q}\left[W\right]\simeq\mathbb{C}_{q}\left[W_{T}\right]\odot \mathbb{C}_{q}\left[W_{T'}\right] \text{.} \end{equation} By Lemma \ref{Lem=StandardExt} this extends to the C$^{\ast}$- and von Neumann algebraic level \begin{equation} \nonumber C_{r,q}^{\ast}\left(W\right)\simeq C_{r,q}^{\ast}\left(W_{T}\right)\otimes C_{r,q}^{\ast}\left(W_{T'}\right)\text{, }{\mathcal N}_{q}\left(W\right)\simeq {\mathcal N}_{q}\left(W_{T}\right) \overline{\otimes}{\mathcal N}_{q}\left(W_{T'}\right)\text{.}\end{equation} Since the (spatial) tensor product of two C$^{\ast}$-algebras is simple if and only if both C$^{\ast}$-algebras are simple, it suffices to look at irreducible Coxeter systems.
\end{remark}

Recall Garncarek's characterisation of factoriality of the single-parameter Hecke-von Neumann algebras of right-angled Coxeter groups, see Theorem \ref{Thm=Garncarek}.
\begin{comment}
\begin{theorem} \cite[Theorem 5.3]{Gar}
Let $(W,S)$ be a right-angled, irreducible Coxeter system with $\left|S\right| \geq 3$. Then the single-parameter Hecke-von Neumann algebra $\mathcal{N}_q(W)$ is a factor if and only if $q \in \left[\ \rho, \rho^{-1} \right]\ $, where $\rho$ denotes the radius of convergence of the power series $\sum_{\mathbf{w}\in W} z^{\left|\mathbf{w}\right|}$. For $q$ outside this interval, $\mathcal{N}_q(W)$ is a direct sum of a factor and $\mathbb{C}$.
\end{theorem}
\end{comment}
Another result related to the simplicity of Hecke C$^{\ast}$-algebras appears in \cite{DeLaHarpe} (see also \cite{Fe} and \cite{Cornulier}).

\begin{theorem}
An irreducible Coxeter group which is neither of spherical nor affine type is C$^{\ast}$-simple and has unique tracial state.
\end{theorem}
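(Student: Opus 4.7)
The plan is to establish simplicity and trace uniqueness simultaneously via Powers' averaging. Recall the \emph{Powers property} for a discrete group $G$: for every finite $F \subseteq G \setminus \{e\}$ and every $n \in \mathbb{N}_{\geq 1}$, there exist $g_1,\ldots,g_n \in G$ and a partition $G = C \sqcup D$ such that $f \cdot C \cap C = \emptyset$ for all $f \in F$ and $g_i \cdot D \cap g_j \cdot D = \emptyset$ for $i \neq j$. Applying the averaging $x \mapsto \tfrac{1}{n}\sum_{i=1}^n \lambda(g_i)x\lambda(g_i)^*$ to a self-adjoint element $x \in C_r^*(G)$ with $\tau(x)=0$ and using the two disjointness conditions to control both the ``$F$-supported'' and the ``off-diagonal'' contributions drives the norm to zero. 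This simultaneously forces every closed two-sided ideal to be trivial and every tracial state to agree with $\tau$. Thus my task reduces to verifying the Powers property for any irreducible, non-spherical, non-affine Coxeter group $W$.

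The key geometric input is the proper isometric action of $W$ on its Davis complex $\Sigma$ (a $\mathrm{CAT}(0)$ cell complex with compact quotient, see \cite{Da}). I would produce a \emph{rank-one hyperbolic element} $h \in W$: an infinite-order element whose translation axis in $\Sigma$ does not bound any flat half-plane. The existence of such $h$ is precisely what the hypotheses provide: if $(W,S)$ is spherical, $W$ is finite and has no infinite-order elements; if $(W,S)$ is affine, $\Sigma$ is essentially Euclidean and every hyperbolic element has a flat axis. In the remaining non-affine, non-spherical case, one builds $h$ as a suitable product of two or three reflections through walls in sufficiently ``generic position'' (the relevant statement is essentially due to Noskov--Vinberg and was refined by Caprace--Fujiwara for Coxeter groups). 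Such an $h$ enjoys north--south dynamics on the visual boundary $\partial\Sigma$: there are distinct fixed points $h^\pm \in \partial\Sigma$ such that for any open neighbourhoods $U^\pm$ of $h^\pm$, large powers $h^n$ send $\partial\Sigma \setminus U^-$ into $U^+$.

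From this dynamical input one constructs the required partition. Given a finite $F \subseteq W \setminus \{e\}$, after possibly conjugating $h$ one may assume that $h^+$ and $h^-$ are not fixed by any element of $F$, so that $F \cdot h^+$ is disjoint from $h^+$ in $\partial\Sigma$. Take $D$ to be the set of $w \in W$ whose orbit $w \cdot x_0$ (for a fixed basepoint $x_0 \in \Sigma$) converges into a small neighbourhood $U^+$ of $h^+$, and $C = W \setminus D$; shrinking $U^+$ gives $f \cdot C \cap C = \emptyset$ for $f \in F$. Taking $g_i = h^{m_i}$ with a rapidly increasing sequence $m_1 < m_2 < \cdots < m_n$ then forces the $g_i \cdot D$ to be nested in shrinking neighbourhoods of $h^+$ and hence pairwise disjoint. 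The main obstacle of the whole argument is the construction of the rank-one element $h$ and the verification that its dynamics are sharp enough to handle both disjointness conditions at once; once that step is in place, the rest is a routine packaging of Powers' averaging, carried out in \cite{DeLaHarpe} (see also \cite{Fe}, \cite{Cornulier}).
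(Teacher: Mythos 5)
This theorem is not proved in the paper: it is quoted verbatim from the literature (de la Harpe, with Fendler and de Cornulier credited for the group-theoretic input), so there is no in-paper argument to compare against. Your overall strategy --- establish the Powers property and then run Powers' averaging --- is indeed the strategy of the cited sources, and the geometric ingredients you invoke (rank-one isometries of the Davis complex for irreducible non-spherical non-affine $(W,S)$, north--south dynamics on the visual boundary) are correct and available via Caprace--Fujiwara. The problem is in how you assemble them: as written, your partition and your choice of the $g_i$ do not satisfy either of the two conditions in your own definition of the Powers property.

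Concretely, you set $D=\{w : w\cdot x_0 \text{ converges into } U^+\}$ and $C=W\setminus D$, and you require $f\cdot C\cap C=\emptyset$ and $g_i\cdot D\cap g_j\cdot D=\emptyset$ with $g_i=h^{m_i}$. Both fail. For the first: $C$ is the ``large'' set; taking $w=h^{-N}\in C$ with $N$ large, $f h^{-N}x_0\to f\cdot h^-$, which is not $h^+$ for generic $f$, so $fh^{-N}\in C$ and $f\cdot C\cap C\neq\emptyset$. For the second: $h^N\in D$ for all large $N$, hence $h^{m_1+N}\in h^{m_1}D\cap h^{m_2}D$ for $N$ large --- north--south dynamics controls where $h^{-k}$ sends the \emph{complement} of a neighbourhood of $h^+$, not where it sends the neighbourhood itself, so powers of a single $h$ produce overlapping (essentially nested) translates, never disjoint ones. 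The standard repair inverts your assignment: take $U$ a small open set in $\partial\Sigma$ containing \emph{both} endpoints $h^\pm$ of a suitable conjugate of $h$ and satisfying $fU\cap U=\emptyset$ for all $f\in F$; let $C$ be the (small) shadow $\{w: wx_0\in U\}$ and $D$ its complement. Then $f\cdot C\cap C=\emptyset$ holds by construction, and $g_i=h^{m_i}$ with widely spaced exponents gives $g_j^{-1}g_i=h^{\pm k}$ with $k$ large, which maps $D$ (bounded away from both endpoints) into $C$, yielding $g_i D\cap g_j D=\emptyset$. Finding such a $U$ requires showing that some $W$-conjugate of $h$ has both endpoints outside the fixed-point set of every $f\in F$ (a ``strong faithfulness'' statement for the boundary action), which you dismiss with ``after possibly conjugating $h$''; this is precisely the point where the cited sources do real work (de Cornulier via Zariski-dense images in semisimple groups, or a careful ping-pong with walls in the Davis complex). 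Until the partition is corrected and that step is supplied, the argument has a genuine gap.
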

In the remaining sections we give partial answers to the question for a characterisation of those Hecke C$^{\ast}$-algebras that are simple and have unique tracial state. In the case of a free product of abelian Hecke C$^\ast$-algebras we obtain a complete answer.

%%%%%%%%%%%%%%%%%%%%%%%%%%%%%%%%%%%%%%%%%%%%%%%%%%%%

\subsection{Non-simplicity of Hecke C$^\ast$-algebras}

Let $\left(W,S\right)$ be a Coxeter system with $\left|S\right|<\infty$ and $z:=\left(z_s\right)_{s\in S}\in \mathbb{C}^{(W,S)}$. For every reduced expression $\mathbf{w}=s_{1}...s_{n}$ of $\mathbf{w}\in W$ define $z_{\mathbf w}:=z_{s_{1}}...z_{s_{n}}$. The growth series of $W$ is the power series in $z$ defined by
\[
W\left(z\right):=\sum_{\mathbf{w}\in W}z_{\mathbf w}.
\]
 We denote its region of convergence by ${\mathcal R}:=\left\{ z\in \mathbb{C}^{(W,S)} \mid W\left(z\right)\text{ converges}\right\}$. For more information on the growth series see \cite[Chapter 17]{Da}. We further set
\begin{eqnarray} \label{generalized radius}
\mathcal{R}':=\left\{ \left(q_{s}^{\epsilon_{s}}\right)_{s\in S}\mid q\in{\mathcal R}\cap \mathbb{R}_{>0}^{(W,S)}\text{, }\epsilon\in \left\{-1,1\right\}^{(W,S)}\right\} \text{.}
\end{eqnarray}
Denote  the closure of ${\mathcal R'}$ in $\mathbb{R}_{>0}^{(W,S)}$ by $\overline{{\mathcal R}'}$.

\begin{lemma} \label{non-simplicity1}
Let $\left(W,S\right)$ be a Coxeter system with $\left|S\right|<\infty$ and $q\in\overline{{\mathcal R'}}$. Then there exists a character on $C_{r,q}^{\ast}\left(W\right)$. In particular $C_{r,q}^{\ast}\left(W\right)$ is not simple and does not have unique tracial state.
\end{lemma}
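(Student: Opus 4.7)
The aim is to produce a character (one-dimensional $\ast$-representation) of $C_{r,q}^{\ast}(W)$ for every $q \in \overline{\mathcal{R}'}$. Once produced, such a character is a tracial state, being a $\ast$-homomorphism into the commutative algebra $\mathbb{C}$, distinct from $\tau_q$, since $\tau_q(T_s^{(q)})=0$ while the character evaluates to a nonzero number on every $T_s^{(q)}$; its kernel is then a proper nonzero closed two-sided ideal of $C_{r,q}^{\ast}(W)$, violating simplicity.

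The starting point is $q \in \mathcal{R} \cap \mathbb{R}_{>0}^{(W,S)}$: I would introduce the common eigenvector
\[
\xi_q := \sum_{\mathbf{w}\in W} q_\mathbf{w}^{1/2}\, \delta_\mathbf{w} \in \ell^2(W),
\]
which is square summable because $\|\xi_q\|^2 = W(q) < \infty$ by assumption. A short case analysis on the action formula \eqref{Eqn=TOp} (splitting on whether $|s\mathbf{v}|$ exceeds $|\mathbf{v}|$, and using the identity $q_s^{-1/2} + p_s(q) = q_s^{1/2}$) yields $T_s^{(q)}\xi_q = q_s^{1/2}\xi_q$ for every $s \in S$. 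Since the $T_s^{(q)}$ generate $\mathbb{C}_q[W]$ as a unital $\ast$-algebra, the line $\mathbb{C}\xi_q$ is a common invariant subspace, so the vector state $\omega_q := \|\xi_q\|^{-2}\langle\,\cdot\,\xi_q,\xi_q\rangle$ is multiplicative on $\mathbb{C}_q[W]$ and extends by norm continuity to a character of $C_{r,q}^{\ast}(W)$ sending $T_s^{(q)}$ to $q_s^{1/2}$. For $q \in \mathcal{R}'$ I write $q_s = (q_s^0)^{\epsilon_s}$ with $q^0 \in \mathcal{R} \cap \mathbb{R}_{>0}^{(W,S)}$ and $\epsilon \in \{-1,1\}^{(W,S)}$, and pull back the character just constructed along the isomorphism $C_{r,q^0}^{\ast}(W)\simeq C_{r,q}^{\ast}(W)$ of Proposition \ref{unitary} (mapping $T_s^{(q^0)}$ to $\epsilon_s T_s^{(q)}$), producing a character $\omega_q$ on $C_{r,q}^{\ast}(W)$ with $\omega_q(T_s^{(q)}) = \epsilon_s q_s^{\epsilon_s/2}$.

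For general $q \in \overline{\mathcal{R}'}$, choose $q^{(n)} \in \mathcal{R}'$ with $q^{(n)} \to q$; since $S$ is finite, after passing to a subsequence the associated sign patterns may be assumed constant, equal to some $\epsilon \in \{-1,1\}^{(W,S)}$. Define $\omega\colon \mathbb{C}_q[W] \to \mathbb{C}$ on generators by $T_s^{(q)} \mapsto \epsilon_s q_s^{\epsilon_s/2}$; that this is a well-defined unital $\ast$-algebra homomorphism is verified on the relations \eqref{Eqn=Rel1}-\eqref{Eqn=Rel2} using the identity $p_s(q) = \epsilon_s(q_s^{\epsilon_s/2} - q_s^{-\epsilon_s/2})$, together with the fact (via Matsumoto's theorem and the conjugation invariance built into $q$ and $\epsilon$) that the product $\prod_i \epsilon_{s_i}q_{s_i}^{\epsilon_{s_i}/2}$ over a reduced expression of $\mathbf{w}$ depends only on $\mathbf{w}$. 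To extend $\omega$ to $C_{r,q}^{\ast}(W)$ I would exploit norm continuity in $q$: from \eqref{Eqn=TOp} one computes $T_s^{(q)} - T_s^{(q')} = (p_s(q) - p_s(q'))P_s$, so $q \mapsto T_s^{(q)} \in \mathcal{B}(\ell^2(W))$ is operator-norm continuous and therefore so is $q \mapsto \|x(q)\|_{C_{r,q}^{\ast}(W)}$ for every fixed finite combination $x(q) := \sum c_\mathbf{w} T_\mathbf{w}^{(q)}$. Passing to the limit in $|\omega_{q^{(n)}}(x(q^{(n)}))| \leq \|x(q^{(n)})\|_{C_{r,q^{(n)}}^{\ast}(W)}$ then gives $|\omega(x(q))| \leq \|x(q)\|_{C_{r,q}^{\ast}(W)}$, producing the desired character.

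The most delicate point is the eigenvector identity of the second paragraph, a careful manipulation of reduced expressions combined with the $p_s$-identity; the rest is a limiting and pullback procedure built on Proposition \ref{unitary} and the explicit perturbation formula for $T_s^{(q)}$.
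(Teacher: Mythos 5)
Your proof is correct and follows the same overall strategy as the paper: construct the character $T_{\mathbf{w}}^{(q)}\mapsto q_{\mathbf{w}}^{1/2}$ for $q$ in the positive part of $\mathcal{R}$, reduce general sign patterns to this case via Proposition \ref{unitary}, and pass to the closure $\overline{\mathcal{R}'}$ by exactly the norm-limit argument the paper uses (boundedness of $\chi$ on $\mathbb{C}_q[W]$ from $|\chi_{q_n}(x_n)|\leq\Vert x_n\Vert$ and norm-continuity of $q\mapsto T_s^{(q)}=T_s^{(1)}+p_s(q)P_s$). The one genuine difference is in the first step: the paper cites Garncarek's central projection $E_q\in\mathcal{N}_q(W)$ with $E_qT_{\mathbf{w}}^{(q)}=q_{\mathbf{w}}^{1/2}E_q$ and sets $\chi_q=\tau_q(\,\cdot\,E_q)/\Vert E_q\Vert_2$, whereas you work directly with the common eigenvector $\xi_q=\sum_{\mathbf{w}}q_{\mathbf{w}}^{1/2}\delta_{\mathbf{w}}\in\ell^2(W)$ and take the vector state at $\xi_q$; this is self-contained and even slightly cheaper, since you never need $E_q$ to be central or to lie in the von Neumann algebra -- only that $\xi_q$ is a joint eigenvector of the self-adjoint generators -- but it is the same underlying object, $E_q$ being essentially the projection onto $\mathbb{C}\xi_q$.
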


\begin{proof}
 First assume that $q\in{\mathcal R}'$ with $q_s \leq 1$ for all $s \in S$. As shown in the proof of \cite[Theorem 5.3]{Gar}, there exists a central projection $E_{q}\in{\mathcal N}_{q}\left(W\right)$ with $E_{q}T_{\mathbf w}^{\left(q\right)}=q_{\mathbf w}^{1/2}E_{q}$ for every $\mathbf{w}\in W$ (compare also with \cite[Lemma 19.2.5]{Da} but note that our notational conventions differ slightly). Hence the map $\chi_{q}(\: \cdot \:):=\tau_q\left(\: \cdot \: E_{q}\right)/\left\Vert E_{q}\right\Vert _{2}$ is a character on $C_{r,q}^{\ast}\left(W\right)$ with $\chi_{q}(T_{\mathbf{w}}^{\left(q\right)})=q_{\mathbf{w}}^{1/2}$, $\mathbf{w} \in W$. Its kernel is a non-trivial maximal ideal in $C_{r,q}^{\ast}\left(W\right)$.

For $q\in\overline{\mathcal{R}'}\setminus\mathcal{R}'$ with $q_s \leq 1$ for all $s \in S$ choose a sequence $\left(q_{n}\right)_{n\in\mathbb{N}}\subseteq\mathcal{R}$ with $q_{n}\rightarrow q$. The map $\chi\text{: }T_{\mathbf{w}}^{\left(q\right)}\mapsto q_{\mathbf{w}}^{1/2}$ defines a character on $\mathbb{C}_{q}\left[W\right]$. For the finite sums $x:=\sum_{\mathbf{w}\in W}x\left(\mathbf{w}\right)T_{\mathbf{w}}^{\left(q\right)}\in\mathbb{C}_{q}\left[W\right]$ and $x_{n}:=\sum_{\mathbf{w}\in W}x\left(\mathbf{w}\right)T_{\mathbf{w}}^{\left(q_{n}\right)}\in\mathbb{C}_{q_{n}}\left[W\right]$ we have $x_{n}\rightarrow x$ in $\mathcal{B}\left(\ell^{2}\left(W\right)\right)$ and $\chi_{q_{n}}\left(x_{n}\right)\rightarrow\chi\left(x\right)$ with $\chi_{q_{n}}$ defined as above. This implies
\begin{eqnarray}
\nonumber
\left|\chi\left(x\right)\right|=\lim_{n\rightarrow\infty}\left|\chi_{q_{n}}\left(x_{n}\right)\right|\leq\lim_{n}\left\Vert x_{n}\right\Vert =\left\Vert x\right\Vert \text{,}
\end{eqnarray}
so $\chi$ extends to a character on $C_{r,q}^{\ast}\left(W\right)$. Hence $C_{r,q}^{\ast}\left(W\right)$ is not simple and does not have unique tracial state.

For general $q\in \overline{\mathcal{R'}}$ the statement follows from the above in combination with Proposition \ref{unitary}
\end{proof}

It follows from Lemma \ref{non-simplicity1} that Hecke C$^{\ast}$-algebras coming from irreducible Coxeter systems of spherical or affine type are never simple and  never have unique tracial state, see Corollary \ref{non-simplicity2}. We are aware of the fact that affine type Coxeter systems always give rise to type I Hecke C$^{\ast}$-algebras (see \cite[\S 4]{Matsumoto}), which implies  non-simplicity. However, the proof of this statement makes use of the (non-trivial) characterisation of affine Coxeter groups in terms of affine Weyl groups and a resulting description of the corresponding Hecke algebras. The proof we present here is  elementary.

\begin{corollary} \label{non-simplicity2}
Let $\left(W,S\right)$ be an irreducible Coxeter system of spherical or affine type. Then for any choice of parameter $q \in \mathbb{R}_{>0}^{(W,S)}$there exists a character on the corresponding Hecke C$^{\ast}$-algebra $C_{r,q}^\ast(W)$. In particular, $C_{r,q}^\ast(W)$ is not simple and does not have unique tracial state.
\end{corollary}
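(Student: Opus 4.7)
The plan is to reduce everything to Lemma \ref{non-simplicity1}: it suffices to show that for every irreducible Coxeter system $(W,S)$ of spherical or affine type and every $q \in \mathbb{R}_{>0}^{(W,S)}$ one has $q \in \overline{\mathcal{R}'}$, where $\mathcal{R}$ is the region of convergence of the multivariate growth series $W(z) = \sum_{\mathbf{w}\in W} z_{\mathbf{w}}$ and $\mathcal{R}'$ is defined in \eqref{generalized radius}. Once this inclusion is established, Lemma \ref{non-simplicity1} produces the desired character, which is a proper $\ast$-homomorphism onto $\mathbb{C}$, and hence both a non-trivial two-sided ideal (its kernel) and a tracial state different from $\tau_q$ (unless $W = \{e\}$, which is excluded).

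First I would treat the spherical case: here $W$ is finite, so the growth series is a polynomial in the variables $z_s$ and therefore converges on all of $\mathbb{C}^{(W,S)}$. Consequently $\mathcal{R} = \mathbb{C}^{(W,S)}$ and $\mathcal{R}' = \mathbb{R}_{>0}^{(W,S)}$, so trivially $q \in \overline{\mathcal{R}'}$. Next, for the affine case, I would use that an irreducible affine Coxeter system is virtually abelian and finitely generated, hence of polynomial word growth (see \cite[Chapter 17]{Da}). This gives an estimate $\#\{\mathbf{w}\in W : |\mathbf{w}| = n\} \leq P(n)$ for some polynomial $P$, and therefore $|W(z)| \leq \sum_n P(n) r^n < \infty$ whenever $|z_s| \leq r < 1$ for all $s \in S$. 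In particular, the open cube $\{z : |z_s| < 1 \text{ for all } s \in S\}$ is contained in $\mathcal{R}$.

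The remaining point is purely bookkeeping in the definition of $\mathcal{R}'$. Given an arbitrary $q \in \mathbb{R}_{>0}^{(W,S)}$, define $\epsilon_s := +1$ if $q_s \leq 1$ and $\epsilon_s := -1$ if $q_s > 1$, and set $r_s := q_s^{\epsilon_s}$. Then $r \in \mathbb{R}_{>0}^{(W,S)}$ satisfies $r_s \leq 1$ for all $s \in S$, and if one additionally wants $r$ conjugation-invariant, the choice of $\epsilon$ is automatically conjugation-invariant because $q$ is. Thus $r$ lies in the closure of $\mathcal{R} \cap \mathbb{R}_{>0}^{(W,S)}$, and by definition $q = (r_s^{\epsilon_s})_{s\in S} \in \overline{\mathcal{R}'}$. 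Applying Lemma \ref{non-simplicity1} completes the proof. The only step requiring outside input is the polynomial growth of affine Coxeter groups, but this is classical and can be quoted from \cite[Chapter 17]{Da}; no substantial obstacle appears.
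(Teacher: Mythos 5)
Your argument is correct, and essentially the paper's own, whenever $S$ is finite: in that case the reduction to Lemma \ref{non-simplicity1} via $q\in\overline{\mathcal{R}'}$ is exactly what the paper does, the only difference being that you justify convergence of the growth series on the open cube $\{z: |z_s|<1\}$ by polynomial growth of virtually abelian groups, whereas the paper quotes amenability together with \cite[Proposition 17.2.1]{Da} and then passes from $q_s\leq 1$ to general $q$ via Proposition \ref{unitary}. Your $\epsilon$-bookkeeping through the definition \eqref{generalized radius} of $\mathcal{R}'$ is a legitimate substitute for that last step, and your treatment of the affine case is, if anything, more self-contained.

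The genuine gap is the spherical case with $\left|S\right|=\infty$. In this paper ``spherical type'' means \emph{locally finite}, not finite: there are irreducible locally finite Coxeter systems on infinitely many generators (e.g.\ the system with diagram $A_\infty$), and for these $W$ is infinite, so your assertion that ``the growth series is a polynomial'' fails. Worse, the entire growth-series framework — the definition of $W(z)$, of $\mathcal{R}$ and $\mathcal{R}'$, and Lemma \ref{non-simplicity1} itself — is set up only under the hypothesis $\left|S\right|<\infty$, so the reduction you propose is not even available there. The paper handles this case by a separate direct argument: the map $\chi: T_{\mathbf{w}}^{(q)}\mapsto q_{\mathbf{w}}^{1/2}$ is a character on $\mathbb{C}_q[W]$, and its norm-boundedness is checked by observing that any $x\in\mathbb{C}_q[W]$ is supported in a finite special subgroup $W_T$ with $T\subseteq S$ finite, that $C_{r,q}^\ast(W_T)$ embeds canonically into $C_{r,q}^\ast(W)$, and that $\chi$ restricted to the finite-dimensional C$^\ast$-algebra $C_{r,q}^\ast(W_T)$ is automatically contractive; hence $\chi$ extends to $C_{r,q}^\ast(W)$. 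You would need to add an argument of this kind (or explicitly restrict to finitely generated systems) to complete the proof.
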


\begin{proof}
First assume that $\left|S\right|<\infty$ and let $q \in \mathbb{R}_{>0}^{(W,S)}$ with $q_s \leq 1$ for every $s\in S$. Being of spherical or affine type the Coxeter group $W$ is amenable. Hence by \cite[Proposition 17.2.1]{Da} the radius of convergence of the power series $W(z)$ is one. But then $q\in\overline{{\mathcal R}}\cap\mathbb{R}_{>0}^{(W,S)}$, so $C_{r,q}^{\ast}\left(W\right)$ admits a character by Lemma \ref{non-simplicity1}.
For general $q \in \mathbb{R}_{>0}^{(W,S)}$, the statement follows with Proposition \ref{unitary}.

Next assume that $\left|S\right|=\infty$ and so $\left(W,S\right)$ is of spherical type. Again, the map $\chi\text{: }T_{\mathbf{w}}^{\left(q\right)}\mapsto q_{\mathbf{w}}^{1/2}$, $\mathbf{w}\in W$ defines a character on $\mathbb{C}_{q}\left[W\right]$. For every element $x:=\sum_{\mathbf{w}\in W}x\left(\mathbf{w}\right)T_{\mathbf{w}}^{\left(q\right)}\in\mathbb{C}_{q}\left[W\right]$ there exists a finite subset $T\subseteq S$ such that the support $\left\{ \mathbf{w}\in W\mid x\left(\mathbf{w}\right)\neq0\right\}$  of $x$ is contained in the finite subgroup $W_{T}$ of $W$ generated by $T$. Now $W_T$ is also a Coxeter group with the same exponents as W, see \cite[Theorem 4.1.6 (i), Theorem 3.4.2 (i)]{Da}. As in \cite[Lemma 19.2.2]{Da} one sees that $C_{r,q}^{\ast}\left(W_{T}\right)$ embeds into $C_{r,q}^{\ast}\left(W\right)$ canonically. Under this identification we have $x\in C_{r,q}^{\ast}\left(W_{T}\right)$. But the map $T_{\mathbf{w}}^{\left(q\right)}\mapsto q_{\mathbf{w}}^{1/2}$ is a character on the finite-dimensional C$^{\ast}$-algebra $C_{r,q}^{\ast}\left(W_{T}\right)$, hence $\left\Vert \chi\left(x\right)\right\Vert \leq\left\Vert x\right\Vert$. As this holds for every $x\in\mathbb{C}_{q}\left[W\right]$, $\chi$ extends to a character on $C_{r,q}^{\ast}\left(W\right)$.
\end{proof}

%%%%%%%%%%%%%%%%%%%%%%%%%%%%%%%%%%%%%%%%%%%%%%%%%%%%

\subsection{Averaging operators} \label{Averaging operators}

To show simplicity properties of Hecke C$^{\ast}$-algebras coming from right-angled, irreducible Coxeter systems with at least three generators (see subsection \ref{simplicity}), we will make use of a method inspired by Power's averaging argument in \cite{Powers}. This requires the introduction of suitable averaging operators on the corresponding Hecke algebra which average over a finite subset of the Coxeter group. The statements of this subsection apply in greater generality. Therefore we will formulate our approach for arbitrary discrete groups.

Let $G$ be a discrete group. Denote the left regular representation of $G$ by $\lambda$, let $\tau$ be the canonical tracial state on $C_{r}^\ast(G)$ and fix some finite set $F\subseteq G$. Every such $F$ defines an averaging operator $\Phi$ on $C_{r}^{\ast}\left(G\right)$ given by
\begin{equation}
\nonumber
\Phi\left(x\right):=\frac{1}{\left|F\right|}\sum_{g\in F}\lambda_{g^{-1}}x\lambda_{g}\text{.}
\end{equation}
The map $\Phi$ is trace-preserving, unital and completely positive. In particular it induces a bounded operator $\tilde{\Phi}$ on $\ell^2(G)\ominus\mathbb{C}\delta_{e}$ given by $\tilde{\Phi}\left(x\delta_{e}\right):=\Phi\left(x\right)\delta_{e}$ for all $x\in\mathbb{C}\left[G\right]$ with $\tau\left(x\right)=0$.

\begin{proposition} \label{operator norm}
Let $G$ be a discrete group with the property that there exist three elements $g_1,g_2,g_3 \in G$ and a subset $D \subseteq G\setminus\left\{e\right\}$ such that $D\cup g_1Dg_1^{-1} =G\setminus\left\{e\right\}$ and the sets $D$, $g_2Dg_2^{-1}$ and $g_3 D g_3^{-1}$ are pairwise disjoint. Take $F:=\left\{e,g_1,g_2,g_3\right\}$. Then the operator $\tilde{\Phi}$ on $\ell^2(G)\ominus\mathbb{C}\delta_{e}$ has norm strictly smaller than one.
\end{proposition}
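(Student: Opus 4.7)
The plan is to proceed by contradiction, via a Powers-type variational argument. First, observe that $\tilde{\Phi}$ is the restriction to $\ell^2(G)\ominus\mathbb{C}\delta_e$ of the convex combination of four unitaries
\[
\tfrac{1}{4}\bigl(I + U_{g_1} + U_{g_2} + U_{g_3}\bigr), \qquad U_g\delta_h := \delta_{g^{-1}hg},
\]
immediate from $\lambda_{g^{-1}}\lambda_h\lambda_g = \lambda_{g^{-1}hg}$ applied to $\delta_e$. Suppose for contradiction that $\|\tilde\Phi\| = 1$, so there exist unit vectors $\xi_n \in \ell^2(G)\ominus\mathbb{C}\delta_e$ with $\|\tilde\Phi\xi_n\|\to 1$. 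Expanding $\sum_{g \in F}\|U_g\xi_n - \tilde\Phi\xi_n\|^2$ and using $\sum_{g\in F} U_g = |F|\tilde\Phi$ gives the identity
\[
\sum_{g \in F} \|U_g\xi_n - \tilde\Phi\xi_n\|^2 \;=\; |F|\bigl(\|\xi_n\|^2 - \|\tilde\Phi\xi_n\|^2\bigr),
\]
whose right-hand side tends to $0$. Combined with the $g = e$ term this yields $\|U_{g_i}\xi_n - \xi_n\|\to 0$ for $i = 1, 2, 3$.

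Let $P$ denote the orthogonal projection onto $\ell^2(D)$, set $a_n := \|P\xi_n\|^2$, and for $i = 1, 2, 3$ set $b_n^{(i)} := \sum_{h \in g_i D g_i^{-1}} |c_h^{(n)}|^2$ where $\xi_n = \sum_h c_h^{(n)}\delta_h$. The change of summation variable $h = g_i k g_i^{-1}$ identifies $b_n^{(i)} = \|PU_{g_i}\xi_n\|^2$, and hence the elementary inequality $\bigl|\|x\|^2 - \|y\|^2\bigr| \leq 2\|x-y\|$ for $\|x\|, \|y\| \leq 1$ gives
\[
\bigl|b_n^{(i)} - a_n\bigr| \;\leq\; 2\|PU_{g_i}\xi_n - P\xi_n\| \;\leq\; 2\|U_{g_i}\xi_n - \xi_n\| \;\longrightarrow\; 0
\]
for each $i = 1, 2, 3$.

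Finally, the two hypotheses on $D$ provide opposing bounds on $a_n$. Pairwise disjointness of $D, g_2 D g_2^{-1}, g_3 D g_3^{-1}$ forces $a_n + b_n^{(2)} + b_n^{(3)} \leq 1$, hence $3 a_n \leq 1 + o(1)$. The covering $D \cup g_1 D g_1^{-1} = G\setminus\{e\}$ forces $a_n + b_n^{(1)} \geq 1$, hence $2 a_n \geq 1 - o(1)$. Combining gives $1/2 \leq \liminf a_n \leq \limsup a_n \leq 1/3$, a contradiction; so $\|\tilde\Phi\| < 1$.

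The main obstacle, in fact quite mild, is matching the conjugation conventions between the unitaries $U_g$ (which act via $h \mapsto g^{-1}hg$) and the hypotheses on $D$ (stated via $g D g^{-1}$); this is resolved cleanly by the substitution that identifies $b_n^{(i)} = \|P U_{g_i}\xi_n\|^2$, so no separate convention translation is needed. Tracking the $o(1)$ terms quantitatively through $\|U_{g_i}\xi_n - \xi_n\| \leq 4\sqrt{1 - \|\tilde\Phi\xi_n\|^2}$ would produce an explicit $\delta > 0$ with $\|\tilde\Phi\|^2 \leq 1 - \delta$.
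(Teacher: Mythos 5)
Your proof is correct. The first half coincides with the paper's argument: the identity $\sum_{g \in F} \Vert U_g\xi_n - \tilde{\Phi}\xi_n\Vert^2 = |F|\bigl(\Vert\xi_n\Vert^2 - \Vert\tilde{\Phi}\xi_n\Vert^2\bigr)$ is exactly the generalized parallelogram law \eqref{Eqn=Parallel} that the paper uses to show that an almost-norming sequence is asymptotically conjugation-invariant, i.e.\ $\Vert U_{g_i}\xi_n - \xi_n\Vert \to 0$. Where you diverge is in the second half: the paper at that point invokes Ching's variant of Puk\'anszky's $14\varepsilon$-argument (Lemma \ref{14-lemma}) as a black box to conclude $\Vert x_k\Vert_2 \to 0$, whereas you prove the needed contradiction directly from the hypotheses on $D$, via the projection $P$ onto $\ell^2(D)$ and the observation that $\Vert P U_{g_i}\xi_n\Vert^2$ measures the mass of $\xi_n$ on $g_i D g_i^{-1}$. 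Your squeeze $1/2 \leq \liminf a_n \leq \limsup a_n \leq 1/3$ is in effect a self-contained qualitative proof of Ching's lemma, so your argument is more elementary and self-contained (no external citation needed), at the cost of being slightly longer; it also has the advantage, as you note, of yielding an explicit spectral gap $\Vert\tilde{\Phi}\Vert^2 \leq 1 - \delta$ if one tracks the $o(1)$ terms, which the paper's qualitative contradiction does not immediately provide. All the individual steps check out: the identification $\tilde{\Phi} = \frac{1}{4}\sum_{g\in F}U_g$ with $U_g\delta_h = \delta_{g^{-1}hg}$, the change of variables giving $b_n^{(i)} = \Vert P U_{g_i}\xi_n\Vert^2$, and the two opposing bounds from disjointness and covering are all correct.
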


The proof of Proposition \ref{operator norm} is based on Ching's following variation of Puk\'{a}nszky's $14\varepsilon$-argument in \cite[Lemma 10]{Pukanszky}.

\begin{lemma} \cite[Lemma 4]{Ching} \label{14-lemma}
Let $G$ be a group as in Proposition \ref{operator norm}. Then
\begin{equation} \label{Pukanszky}
\nonumber
\left\Vert x-\tau\left(x\right)\right\Vert _{2}\leq14\max_{i=1,2,3}\Vert x-\lambda_{g_i^{-1}}x\lambda_{g_i}\Vert _{2}
\end{equation} for every $x\in C_{r}^{\ast}\left(G\right)$.
\end{lemma}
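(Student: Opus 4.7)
The plan is to adapt Pukánszky's classical $14\varepsilon$-argument to the GNS picture. First I would reduce to the case $\tau(x) = 0$, since $x \mapsto \lambda_{g_{i}^{-1}} x \lambda_{g_{i}}$ fixes scalars and so the right-hand side is unchanged by the substitution $x \mapsto x - \tau(x)$, while the left-hand side is exactly $\|x - \tau(x)\|_{2}$. After this reduction, set $\xi := x \delta_{e} \in \ell^{2}(G) \ominus \mathbb{C}\delta_{e}$, so that $\|\xi\|_{2} = \|x\|_{2}$, and introduce the unitaries $V_{i} \in \mathcal{B}(\ell^{2}(G))$ defined by $V_{i}\delta_{h} := \delta_{g_{i} h g_{i}^{-1}}$. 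A short computation gives $(\lambda_{g_{i}^{-1}} x \lambda_{g_{i}})\delta_{e} = V_{i}^{\ast}\xi$, so with $\varepsilon := \max_{i}\|x - \lambda_{g_{i}^{-1}} x \lambda_{g_{i}}\|_{2}$ one has $\|\xi - V_{i}^{\ast}\xi\|_{2} \leq \varepsilon$ for $i = 1, 2, 3$. Each $V_{i}$ permutes the standard basis, so it restricts to an isometry $\ell^{2}(D) \to \ell^{2}(g_{i} D g_{i}^{-1})$.

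For the upper bound I would use the covering hypothesis $G \setminus \{e\} = D \cup g_{1} D g_{1}^{-1}$. Writing $t := \|\xi|_{D}\|_{2}$, the restriction $\xi|_{D^{c} \cap G \setminus \{e\}}$ is supported in $g_{1} D g_{1}^{-1}$, and the isometry $V_{1}^{\ast} : \ell^{2}(g_{1} D g_{1}^{-1}) \to \ell^{2}(D)$ yields
\begin{equation*}
\|\xi|_{g_{1} D g_{1}^{-1}}\|_{2} = \|(V_{1}^{\ast}\xi)|_{D}\|_{2} \leq \|\xi|_{D}\|_{2} + \|(V_{1}^{\ast}\xi - \xi)|_{D}\|_{2} \leq t + \varepsilon,
\end{equation*}
so that $\|\xi\|_{2}^{2} \leq t^{2} + (t+\varepsilon)^{2}$. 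For a matching lower bound I would exploit the pairwise disjointness of $D$, $g_{2} D g_{2}^{-1}$ and $g_{3} D g_{3}^{-1}$. For $i = 2, 3$ the vector $(V_{i}\xi)|_{g_{i} D g_{i}^{-1}} = V_{i}(\xi|_{D})$ has norm $t$, and restricting $\|V_{i}\xi - \xi\|_{2} \leq \varepsilon$ to $\ell^{2}(g_{i} D g_{i}^{-1})$ yields $\|\xi|_{g_{i} D g_{i}^{-1}}\|_{2} \geq t - \varepsilon$ whenever $t \geq \varepsilon$ (the remaining case $t \leq \varepsilon$ already gives $\|\xi\|_{2}^{2} \leq 5\varepsilon^{2}$ directly). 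Orthogonality of the three restrictions then gives $\|\xi\|_{2}^{2} \geq t^{2} + 2(t - \varepsilon)^{2}$.

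Combining the two estimates produces $2(t - \varepsilon)^{2} \leq (t + \varepsilon)^{2}$, hence $t \leq (3 + 2\sqrt{2})\varepsilon$, and substituting back gives $\|\xi\|_{2} \leq \sqrt{(3 + 2\sqrt{2})^{2} + (4 + 2\sqrt{2})^{2}}\,\varepsilon = \sqrt{41 + 28\sqrt{2}}\,\varepsilon$, which is comfortably below $14\varepsilon$. The main obstacle is the bookkeeping: one must verify carefully that the conjugation $\mathrm{Ad}(\lambda_{g_{i}^{-1}})$ on $C_{r}^{\ast}(G)$ corresponds to the basis permutation $V_{i}^{\ast}$ on $\ell^{2}(G)$, and that the covering and disjointness conditions feed cleanly into a single quantity $t$. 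Since the sharp constant produced by this route is under $9$, there is substantial slack relative to Ching's advertised $14$, so a somewhat looser estimate at any intermediate step still succeeds.
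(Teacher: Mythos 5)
Your argument is correct: the paper gives no proof of this lemma (it is quoted verbatim from Ching's paper), and your reconstruction is exactly the classical Puk\'anszky-style $\ell^2$-decomposition on which Ching's Lemma 4 rests. Every step checks out --- the reduction to $\tau(x)=0$, the identification $(\lambda_{g_i^{-1}}x\lambda_{g_i})\delta_e=V_i^{\ast}\xi$, the upper bound $\Vert\xi\Vert_2^2\le t^2+(t+\varepsilon)^2$ from the covering condition, the lower bound $t^2+2(t-\varepsilon)^2\le\Vert\xi\Vert_2^2$ from pairwise disjointness, and the conclusion $t\le(3+2\sqrt{2})\varepsilon$ --- and your constant $\sqrt{41+28\sqrt{2}}<9$ is indeed comfortably below the quoted $14$.
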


Further, we shall need that for arbitrary vectors $\xi_0, \ldots, \xi_n$ in a Hilbert space $\mathcal H$ we have the following equality, which for $n = 1$ is known as the parallelogram law:
\begin{equation}\label{Eqn=Parallel}
\Vert \sum_{i =0}^n \xi_i \Vert^2 + \sum_{0 \leq i < j \leq n} \Vert \xi_i - \xi_j \Vert^2 = (n+1) \sum_{i=0}^n \Vert \xi_i \Vert^2.
\end{equation}
Note that \eqref{Eqn=Parallel} can be verified directly by writing out all norms as inner products.

\begin{proof}[Proof of Proposition \ref{operator norm}]
The norm of $\tilde{\Phi}: \ell^2(G) \ominus \mathbb{C} \delta_e \rightarrow \ell^2(G) \ominus \mathbb{C} \delta_e$ is clearly majorized by 1. Now suppose that $\Vert \tilde{\Phi} \Vert = 1$. Take a sequence $x_k \in \mathbb{C}[G]$ with $\tau(x_k) = 0$ and $\Vert x_k \delta_e \Vert = 1$ such that $\Vert \tilde{\Phi}(x_k \delta_e)\Vert \nearrow 1$. Set $\xi_{i}^k = \lambda_{g_i^{-1}}  x_k  \lambda_{g_i} \delta_e$ with $g_1, g_2, g_3$ as in the proposition and $g_0$ the identity. By \eqref{Eqn=Parallel} we have
\[
\sum_{0 \leq i < j \leq 3} \Vert \xi_i^k  - \xi_j^k \Vert^2  = 4 \sum_{i=0}^3 \Vert \xi_i^k \Vert^2 - \Vert \sum_{i =0}^3 \xi_i^k \Vert^2 = 4^2 - \Vert 4 \tilde{\Phi}(x_k \delta_e) \Vert^2
\rightarrow  4^2 - 4^2 =0.
\]
Therefore each of the individual summands on the left hand side converge to 0 as $k \rightarrow \infty$. Since $\xi_0^k = x_k \delta_e$  and $\Vert y \Vert_2 = \Vert y \delta_e \Vert, y \in \mathbb{C}[G]$ we see from Lemma \ref{14-lemma} that $\Vert x_k \Vert_2 = \Vert x_k - \tau(x_k) \Vert_2 \rightarrow 0$. This contradicts that $\Vert x_k \delta_e \Vert =1$. We conclude that $\Vert \tilde{\Phi} \Vert < 1$.
  \end{proof}

%%%%%%%%%%%%%%%%%%%%%%%%%%%%%%%%%%%%%%%%%%%%%%%%%%%%%%

\subsection{Simplicity in the right-angled case for $q$ close to 1} \label{simplicity}

Let us now bring together the statements from Theorem \ref{Thm=Khintchine} and Section \ref{Averaging operators}.

\begin{theorem} \label{simple}
Let $\left(W,S\right)$ be a right-angled, irreducible Coxeter system with $3\leq\left|S\right|<\infty$. Then there exists an open neighborhood  ${\mathcal U}\subseteq\mathbb{R}_{>0}^{(W,S)}$ of $1 =(1_s)_{s\in S}$ such that for all $q\in{\mathcal U}$ the C$^\ast$-algebra $C_{r,q}^{\ast}\left(W\right)$ is simple and has unique tracial state.
\end{theorem}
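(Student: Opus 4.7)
The plan is to adapt the Powers-type averaging argument of Section \ref{Averaging operators} to the Hecke setting and to use the Haagerup inequality (Theorem \ref{Thm=Khintchine}) to transfer the strict contractivity known at $q = 1$ to a neighbourhood of $1$. Since $(W,S)$ is irreducible right-angled with $|S|\geq 3$, the group $W$ is non-amenable, and a standard combinatorial verification using the exchange/folding conditions produces $g_1,g_2,g_3 \in W$ and a subset $D \subseteq W\setminus\{e\}$ satisfying the hypotheses of Proposition \ref{operator norm}; hence the Powers operator $\tilde\Phi_1$ on $\ell^2(W) \ominus \mathbb{C}\delta_e$ associated with $F = \{e,g_1,g_2,g_3\}$ satisfies $\|\tilde\Phi_1\| < 1$.

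For general $q$, I would use the $\ast$-algebra isomorphism $\pi_{q,1} : \mathbb{C}_1[W] \to \mathbb{C}_q[W]$ of Proposition \ref{isomorphism} to produce self-adjoint unitaries $U_g(q) := \pi_{q,1}(\lambda_g) \in C_{r,q}^\ast(W)$ for $g \in W$ and set
\[
\Phi_q(x) := \tfrac{1}{4} \sum_{h \in F} U_h(q)\, x\, U_h(q), \qquad x \in C_{r,q}^\ast(W).
\]
This is a unital completely positive $\tau_q$-preserving map; let $\tilde\Phi_q$ denote its extension to $L^2(C_{r,q}^\ast(W), \tau_q) \ominus \mathbb{C}\Omega_q$. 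Via the canonical unitary $V_q : \ell^2(W) \to L^2(C_{r,q}^\ast(W), \tau_q)$, $\delta_g \mapsto T_g^{(q)}\Omega_q$, pull $\tilde\Phi_q$ back to an operator $\hat\Phi_q$ on $\ell^2(W) \ominus \mathbb{C}\delta_e$, so that $\hat\Phi_1 = \tilde\Phi_1$. The heart of the argument is to show that $\|\hat\Phi_q\| \to \|\hat\Phi_1\|$ as $q \to 1$, which immediately yields an open neighbourhood $\mathcal{U}$ of $1$ on which $\|\tilde\Phi_q\|<1$. Once this is established, trace uniqueness and simplicity follow from the standard Powers/Dixmier argument: any tracial state $\sigma$ on $C_{r,q}^\ast(W)$ is $\Phi_q$-invariant, and strict $L^2$-contraction forces $\sigma = \tau_q$; likewise any non-trivial closed two-sided ideal would provide a $\tilde\Phi_q$-invariant vector in $L^2 \ominus \mathbb{C}\Omega_q$, a contradiction.

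The \emph{main obstacle} is the norm-continuity estimate for $\hat\Phi_q$. The pulled-back operators $\hat U_h(q) := V_q^\ast U_h(q) V_q$ on $\ell^2(W)$ are not left multiplication by $\lambda_h$, and consequently $\hat\Phi_q$ does not preserve the word-length filtration on $\ell^2(W)$. However, since $\pi_{q,1}$ expresses each $U_h(q)$ as a fixed-length polynomial in the Hecke generators with coefficients depending continuously on $q$ and recovering $\lambda_h$ at $q = 1$, the operator $\hat U_h(q) - \lambda_h$ shifts each length-$d$ subspace into length-$(d + O(1))$ subspaces, and the matrix entries of $\hat\Phi_q - \hat\Phi_1$ between such subspaces vanish at $q=1$ with a quantitative rate. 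The Haagerup inequality of Theorem \ref{Thm=Khintchine} is then invoked on each block to convert blockwise $L^2$-smallness into operator-norm smallness, at the price of a factor linear in $d$. It is precisely the \emph{linear} (rather than exponential) nature of this factor that allows one to combine the block estimates with the geometric decay coming from iterated averaging and obtain the required global operator-norm continuity; this is where the full strength of Theorem \ref{Thm=Khintchine}, rather than a weaker polynomial bound, is essential.
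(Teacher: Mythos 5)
Your overall architecture (Powers-type averaging with the deformed unitaries $\pi_{q,1}(T_{\mathbf w}^{(1)})$ from Proposition \ref{isomorphism}, combined with Proposition \ref{operator norm} and the Haagerup inequality) matches the paper's, but you have placed the Haagerup inequality at the wrong joint of the argument, and the step you declare to be ``standard'' is exactly where it is indispensable. First, the continuity $\Vert\tilde\Phi_q\Vert\to\Vert\tilde\Phi_1\Vert$ as $q\to 1$, which you single out as the main obstacle, is elementary and needs no Khintchine-type input: every operator involved acts on the single Hilbert space $\ell^2(W)$, and $T_s^{(q)} = T_s^{(1)} + p_s(q)P_s$ with $P_s$ the fixed projection of \eqref{Eqn=ProjectionSpace}, so each $\pi_{q,1}(T_{\mathbf w}^{(1)})$ (and its right-handed counterpart in the commutant implementing right multiplication) is a finite product of operators depending norm-continuously on $q$; hence $\tilde\Phi_q\to\tilde\Phi_1$ in operator norm on $\ell^2(W)\ominus\mathbb{C}\delta_e$. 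Your block-by-block filtration scheme for this step is unnecessary and would be harder to close than the direct estimate.

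Second, and more seriously, the conclusion from $\Vert\tilde\Phi_q\Vert<1$ is not ``standard'': a tracial state $\sigma$ and a closed two-sided ideal only see the operator norm, not the $\Vert\cdot\Vert_2$-norm, so strict $L^2$-contraction of $\tilde\Phi_q$ by itself forces nothing; in particular your claim that a non-trivial ideal would produce a $\tilde\Phi_q$-invariant vector in $L^2\ominus\mathbb{C}\Omega_q$ has no justification. What is actually needed is operator-norm convergence $\Phi_q^l(x)\to\tau_q(x)1$, and this is precisely where Theorem \ref{Thm=Khintchine} enters: for $x$ supported in word length at most $d$, the element $\Phi_q^l(x)$ is supported in length at most $(2l+1)d$, and the Haagerup inequality together with Cauchy--Schwarz yields $\Vert\Phi_q^l(x)-\tau_q(x)\Vert\le C\left((2l+1)d\right)^{3/2}\Vert\tilde\Phi_q\Vert^{l}\Vert x-\tau_q(x)\Vert_2$ for a constant $C$ depending only on $q$ and $\Gamma$; the polynomial factor in $l$ is then absorbed by the geometric decay. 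With that estimate one runs Powers' argument: $\sigma(x)=\sigma(\Phi_q^l(x))\to\tau_q(x)$ for every tracial state $\sigma$, and for $0\ne x\ge 0$ in a closed ideal $I$ one gets $\tau_q(x)1\in I$, whence $I=C_{r,q}^{\ast}(W)$. Your proposal omits this bridge entirely (your final paragraph gestures at the correct interplay of a polynomial-in-$d$ factor against geometric decay, but deploys it inside the $q$-continuity step where it does not belong), so as written the proof does not close. Two minor points: $\pi_{q,1}(\lambda_g)$ is unitary but not self-adjoint for general $g$, so the average should read $\pi_{q,1}(T_{\mathbf w^{-1}}^{(1)})\,x\,\pi_{q,1}(T_{\mathbf w}^{(1)})$; and explicit witnesses for Proposition \ref{operator norm} can be taken as $\mathbf w_1=t_0\cdots t_n\cdots t_0$, $\mathbf w_2=s$, $\mathbf w_3=t_1$ with $D=\{\mathbf w\in W : \vert t_0\mathbf w\vert<\vert\mathbf w\vert\}$.
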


\begin{proof}
As $(W,S)$ is irreducible with $\left|S\right| \geq 3$ we find elements $s,t_0,\ldots,t_n \in S$ with $n \geq 1$, $S=\left\{s,t_0 ,\ldots,t_n\right\}$, $m_{s,t_0}=\infty$, $t_1 \neq s$ and $m_{t_i,t_{i+1}}=\infty$ for $i=0,\ldots,n-1$. Then $\mathbf{w}_1:=t_0 \cdots t_n \cdots t_0$, $\mathbf{w}_2:=s$, $\mathbf{w}_3:=t_1$ and $D:=\left\{ \mathbf{w}\in W\mid\left|t_{0}\mathbf{w}\right|<\left|\mathbf{w}\right|\right\} $ satisfy the conditions from Proposition \ref{operator norm}.
For every reduced expression $\mathbf{w}=s_{1}\cdots s_{m}$ in $W$ consider the operator
\begin{eqnarray}
\nonumber
\prod_{i=1}^{m}\left(\frac{1-q_{s_{i}}}{1+q_{s_{i}}}+\frac{2\sqrt{q_{s_{i}}}}{1+q_{s_{i}}}T_{s_{i}}^{\left(q\right)}\right)\in C_{r,q}^{\ast}\left(W\right)\text{.}
\end{eqnarray}
By the same arguments as in the proof of Proposition \ref{isomorphism} this operator is unitary and does not depend on the reduced expression for $\mathbf{w}$. By abuse of notation we will denote it by $\pi_{q,1}\left(T_{\mathbf{w}}^{\left(1\right)}\right)$. Choose a positive integer $d$ with $\left|\mathbf{w}\right|\leq d$ for all $\mathbf w$ in $F=\left\{ e,\mathbf{w}_1,\mathbf{w}_2,\mathbf{w}_3\right\}$ and define a \emph{deformed} averaging operator $\Phi_{q}$ on $C_{r,q}^{\ast}\left(W\right)$ by
\begin{equation}
\nonumber
\Phi_{q}\left(x\right)=\frac{1}{\left|F\right|}\sum_{\mathbf{w}\in F}\pi_{q,1}\left(T_{\mathbf{w}^{-1}}^{\left(1\right)}\right)x\pi_{q,1}\left(T_{\mathbf{w}}^{\left(1\right)}\right)\text{.}
\end{equation}
Again, these maps are trace-preserving, unital, completely positive and they induce contractive linear operators $\tilde{\Phi}_{q}$ on $\ell^{2}(W)\ominus\mathbb{C}\delta_{e}$ via $\tilde{\Phi}_{q}\left(x\delta_{e}\right):=\Phi_{q}\left(x\right)\delta_{e}$ for $x\in\mathbb{C}_{q}\left[W\right]$. One easily checks that $\Vert \tilde{\Phi}_{q}\Vert \rightarrow\Vert \tilde{\Phi}\Vert$ for $q \rightarrow 1$. In particular, Proposition \ref{operator norm} implies that there exists an open neighborhood ${\mathcal U}\subseteq\mathbb{R}_{>0}^{(W,S)}$ of $1 = (1_s)_{s \in S}$ such that $\tilde{\Phi}_{q}$ has norm strictly smaller than $1 \in \mathbb{R}$ for all $q\in{\mathcal U}$.

Denote by $\chi_{d}$ the word length projection on $C_{r,q}^\ast (W)$ from equation \eqref{Eqn=WordLengthProj} and put $\chi_{\leq d}:=\sum_{r=0}^{d}\chi_{r}$. Let $l \geq 1$ and $x\in\chi_{\leq d}\left(\mathbb{C}_{q}\left[W\right]\right)$. Note that for every $\mathbf{w}\in F$ we have $\chi_r({\pi_{q,1}(T_\mathbf{w}^{(1)})})=0$ for $r \geq d$. As $\Phi_q^l$ averages over $\left\{ \pi_{q,1}(T_{\mathbf{w}}^{\left(1\right)})\mid\mathbf{w}\in F\right\} $ and $\chi_r(x)=0$ for $r \geq d$, we get that $\chi_r(\Phi_q^l(x))=0$ for $r \geq (2l+1)d$. In particular
\begin{eqnarray}
\nonumber
\Vert \Phi_{q}^{l}\left(x\right)-\tau_q\left(x\right)\Vert \leq \sum_{r=1}^{(2l+1)d}\Vert \chi_{r}\left(\Phi_{q}^{l}\left(x\right)\right)\Vert
\end{eqnarray}
by the triangle inequality and with $C:=(\# \Cliq(\Gamma))^3 ( \prod_{s \in S} p_s(q) )$ we have
\begin{eqnarray}
\nonumber
\Vert \Phi_{q}^{l}\left(x\right)-\tau_q\left(x\right)\Vert \leq \sum_{r=1}^{(2l+1)d}Cr\Vert \chi_{r}\left(\Phi_{q}^{l}\left(x\right)\right)\Vert _{2}
\end{eqnarray}
by Theorem \ref{Thm=Khintchine}. The Cauchy-Schwarz inequality then implies
\begin{eqnarray}
\nonumber
\Vert \Phi_{q}^{l}\left(x\right)-\tau_q\left(x\right)\Vert &\leq& \left((2l+1)d\right)^{\frac{1}{2}}\left(\sum_{r=1}^{(2l+1)d}C^{2}r^{2}\Vert \chi_{r}\left(\Phi_{q}^{l}\left(x\right)\right)\Vert _{2}^{2}\right)^{\frac{1}{2}}\\
\nonumber
 &\leq& C\left((2l+1)d\right)^{\frac{3}{2}}\left(\sum_{r=1}^{(2l+1)d}\Vert \chi_{r}\left(\Phi_{q}^{l}\left(x\right)\right)\Vert _{2}^{2}\right)^{\frac{1}{2}}\\
\nonumber
&=& C\left((2l+1)d\right)^{\frac{3}{2}}\Vert \Phi_{q}^{l}\left(x\right)-\tau_q\circ\Phi_{q}^{l}\left(x\right)\Vert _{2}\\
\nonumber
&\leq& C\left((2l+1)d\right)^{\frac{3}{2}}\Vert \tilde{\Phi}_{q}\Vert ^{l}\Vert x-\tau_q\left(x\right)\Vert _{2} \text{.}
\end{eqnarray}
 For $q\in{\mathcal U}$ this converges to $0$ as $l \rightarrow \infty$. The simplicity and the unique trace property of $C_{r,q}^{\ast}\left(W\right)$, $q\in{\mathcal U}$ now follow by a standard argument (see for instance \cite{Powers}): Let $I$ be a non-zero ideal in $C_{r,q}^{\ast}\left(W\right)$. Choose $0\neq x\in I$ positive. For every $\varepsilon>0$ we find $x_{\varepsilon}\in\mathbb{C}_{q}\left[W\right]$ with $\left\Vert x-x_{\varepsilon}\right\Vert <\varepsilon/3$. For $l$ large enough this implies
\begin{eqnarray}
\label{ideal}
\Vert \Phi_{q}^{l}\left(x\right)-\tau_q\left(x\right)\Vert \leq\Vert \Phi_{q}^{l}\left(x-x_{\varepsilon}\right)\Vert +\Vert \Phi_{q}^{l}\left(x_{\varepsilon}\right)-\tau_q\left(x_{\varepsilon}\right)\Vert +\Vert \tau_q\left(x_{\varepsilon}\right)-\tau_q\left(x\right)\Vert <\varepsilon\text{,}
\end{eqnarray}
so $0\neq\tau_q\left(x\right)\in I$. Hence $I$ must be trivial and this shows that $C_{r,q}^{\ast}\left(W\right)$ is simple if $q\in{\mathcal U}$. Further, as $\Phi_{q}^{l}\left(x\right)\rightarrow\tau_q\left(x\right)$ by \eqref{ideal} and $\tau'\left(\Phi_{q}^{l}\left(x\right)\right)=\tau'\left(x\right)$ for every tracial state $\tau'$, $C_{r,q}^{\ast}\left(W\right)$ has $\tau_q$ as its unique tracial state.
\end{proof}

\begin{remark}\label{Rmk=Factor}$\:$ \\
\emph{(a)} Theorem \ref{simple} immediately implies that for $q\in{\mathcal U}$ the Hecke-von Neumann algebra ${\mathcal N}_{q}\left(W\right)$ is a $\text{II}_{1}$-factor  by uniqueness of the trace and Lemma \ref{Lem=StandardExt}. This extends results by Garncarek \cite{Gar} to the multi-parameter case; note that the proof from \cite{Gar} does not trivially extend to the multi-parameter case. {\it Note:} After completion of this paper the factoriality question in the right-angled multi-parameter case was fully solved by Raum and Skalski in \cite{RaumSkalski}.
\newline
\emph{(b)} Theorem \ref{simple} does not give any information about the simplicity of Hecke C$^{\ast}$-algebras of infinitely generated Coxeter groups unless $q$ equals $1$.
\end{remark}

%%%%%%%%%%%%%%%%%%%%%%%%%%%%%%%%%%%%%%%%%%%%%%%%%%%%

\subsection{Free products of abelian Coxeter groups}\label{Sect=FreeAbelian}

In this section we want to investigate the maximum size of the open neighborhood ${\mathcal U}\subseteq\mathbb{R}_{>0}^{(W,S)}$ in Theorem \ref{simple}. Though our aim is different, the approach is inspired by \cite[Section 6]{Gar} and translates the remarks made there into the C$^{\ast}$-algebraic setting by using results from \cite{Dykema}.

Let us assume that $\left(W,S\right)$ is a Coxeter system where $W$ is of the form $W=\mathbb{Z}_{2}^{k_{1}}\ast\cdots\ast\mathbb{Z}_{2}^{k_{l}}$ with $l, k_{1}\geq2$ and $k_{2},\ldots,k_{l}\in\mathbb{N}$. For each $1 \leq m \leq l$ denote by $s_{1}^{\left(m\right)},\ldots,s_{k_{m}}^{\left(m\right)}$ the mutually commuting generators corresponding to the component $\mathbb{Z}_{2}^{k_{m}}$ of $W$ and set $S_{m}:=\left\{ s_{1}^{\left(m\right)},\ldots,s_{k_{m}}^{\left(m\right)}\right\}$, so in particular $S=\bigcup_{m=1}^{l}S_{m}$. Let $q \in \mathbb{R}_{>0}^{(W,S)}$. The Hecke C$^{\ast}$-algebra $C_{r,q}^{\ast}\left(W\right)$ decomposes as a reduced free product
\begin{eqnarray} \nonumber
\left(C_{r,q}^{\ast}\left(W\right),\tau_q\right) \simeq \ast_{m=1}^{l}\left(\bigotimes_{i=1}^{k_{m}}\left(C_{r,q}^{\ast}\left(W_{s_{i}^{\left(m\right)}}\right),\tau_q\right)\right)
\end{eqnarray}
over the canonical traces. In the notation of \cite{Dykema} we get using \cite[Lemma 6.2]{Gar} that
\begin{eqnarray} \label{decomposition}
 \left(C_{r,q}^{\ast}\left(W\right),\tau_q\right) \simeq \ast_{m=1}^{l}\left(C\left(\mathbb{Z}_{2}^{k_{m}}\right),\int(\cdot)\:\mu_{m}\right),
\end{eqnarray}
where $\mu_{m}\left(\mathbf{w}\right)=q_{\mathbf w}\prod_{i=1}^{k_{m}}\left(1+q_{s_{i}^{\left(m\right)}}\right)^{-1}$ for $\mathbf{w}\in\mathbb{Z}_{2}^{k_{m}}$.

\begin{proposition} \label{radius}
Let $\left(W,S\right)$ be a Coxeter system of the form $W=\mathbb{Z}_{2}^{k_{1}}\ast \cdots \ast\mathbb{Z}_{2}^{k_{l}}$ where $l, k_{1},\ldots ,k_{l}\in\mathbb{N}$ and $q \in \mathbb{R}_{>0}^{(W,S)}$. Then the growth series $W\left(z\right)$ of $W$ is equal to the Taylor expansion of the multivariate function
\begin{eqnarray} \label{multivariate}
z\mapsto\left(\sum_{m=1}^{l}\prod_{i=1}^{k_{m}}\left(1+z_{s_{i}^{\left(m\right)}}\right)^{-1}-\left(l-1\right)\right)^{-1}\text{.}
\end{eqnarray}
The series converges for every element in
\begin{eqnarray}
\nonumber
\Omega:=\left\{ \left(z_{s}\right)_{s\in S}\in \mathbb{C}^{(W,S)} \cap \left[0,1\right]^{S}\mid\sum_{m=1}^{l}\prod_{i=1}^{k_{m}}\left(1+z_{s_{i}^{\left(m\right)}}\right)^{-1}>l-1\right\}\text{,}
\end{eqnarray}
i.e. $\Omega\subseteq\mathcal{R}$ where $\mathcal{R}$ is the region of convergence of $W\left(z\right)$.
\end{proposition}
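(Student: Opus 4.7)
My plan for this proposition is in two pieces: first establish the identity $W(z) = 1/P(z)$ as formal power series (where $P(z)$ denotes the function displayed in \eqref{multivariate}), and then deduce the convergence claim on $\Omega$ from the non-negativity of the coefficients of $W(z)$.

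For the formal identity, I would first note that in each abelian factor $W_m = \mathbb{Z}_{2}^{k_m}$, every element is uniquely a product of a subset $T\subseteq S_m$ of distinct generators, with length $\vert T\vert$. This immediately gives $W_m(z) = \prod_{i=1}^{k_m}(1 + z_{s_i^{(m)}})$. The main step is then the classical free-product relation: if $G = A \ast B$ with a length function extending those of $A$ and $B$ additively on reduced expressions, then $1/G(z) = 1/A(z) + 1/B(z) - 1$. To prove this, I would decompose a nontrivial reduced element of $A\ast B$ into its alternating syllables and set $a = A(z)-1$, $b = B(z)-1$. Writing $S_a$ (resp.\ $S_b$) for the generating series of reduced words beginning in $A\setminus\{e\}$ (resp.\ $B\setminus\{e\}$), the recursions $S_a = a(1+S_b)$ and $S_b = b(1+S_a)$ solve to $S_a = a(1+b)/(1-ab)$, $S_b = b(1+a)/(1-ab)$, and hence $G(z) = 1 + S_a + S_b = (1+a)(1+b)/(1-ab) = AB/(A+B-AB)$, which rearranges to the stated identity. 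A straightforward induction on $l$ then yields $1/W(z) = \sum_{m=1}^{l} 1/W_m(z) - (l-1) = P(z)$ as formal series.

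For the convergence on $\Omega$, I would first observe that the function $z \mapsto \sum_{m=1}^l \prod_{i=1}^{k_m}(1+z_{s_i^{(m)}})^{-1}$ is coordinate-wise decreasing on $[0,1]^S$, so the condition defining $\Omega$ is preserved under componentwise shrinking, making $\Omega$ star-shaped at the origin. Thus for any fixed $z \in \Omega$ the ray $\{tz : t \in [0,1]\}$ lies in $\Omega$, i.e.\ $P(tz) > 0$ for all $t \in [0,1]$. Now consider the single-variable function $t \mapsto 1/P(tz)$: it is a rational function of $t$ with no real pole on $[0,1]$, hence analytic on an open neighbourhood of $[0,1]$ in $\mathbb{C}$, so its Taylor expansion at $t=0$ has radius of convergence strictly greater than $1$. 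The formal identity from the previous step specializes to $\phi(t) P(tz) = 1$ in $\mathbb{C}[[t]]$, where $\phi(t) := \sum_{n \geq 0} \bigl(\sum_{\vert \mathbf{w}\vert = n} z_{\mathbf{w}}\bigr) t^n$. Hence $\phi(t)$ agrees formally with the Taylor expansion of $1/P(tz)$; as the latter converges at $t=1$ and all coefficients of $\phi$ are non-negative, the partial sums of $\phi(1) = W(z)$ are bounded, so $W(z)$ converges.

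The main obstacle I anticipate is not the algebraic identity itself (which is essentially classical and mirrors the single-parameter computations of \cite[Section 6]{Gar}), but rather extracting honest convergence in the multi-variable setting from what is \emph{a priori} only a formal identity. The reduction to a single-variable Taylor series along the ray $\{tz\}$, exploiting both the star-shapedness of $\Omega$ and the non-negativity of the coefficients of $W(z)$, is what sidesteps this issue cleanly.
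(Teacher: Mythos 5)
Your derivation of the formal identity $W(z)P(z)=1$ is sound and essentially parallel to the paper's: the paper splits off the first syllable of words starting in the $m$-th factor to get $W(z)=\bigl(1+\sum_{\mathbf{w}\in\mathcal{A}_m^c}z_{\mathbf{w}}\bigr)\prod_{i}(1+z_{s_i^{(m)}})$, which is your two-factor recursion $S_a=a(1+S_b)$ in disguise (the paper works with honestly convergent series on a small neighbourhood of $0$ rather than formally, but that is cosmetic). The gap is in the convergence step. From ``$t\mapsto 1/P(tz)$ is a rational function of $t$ with no pole on $[0,1]$, hence analytic on an open neighbourhood of $[0,1]$'' you conclude that its Taylor expansion at $t=0$ has radius of convergence strictly greater than $1$. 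That inference is false: the radius of convergence equals the distance to the nearest \emph{complex} singularity, and a rational function can be pole-free on all of $\mathbb{R}$ while having non-real poles inside the unit disc --- e.g.\ $1/(1+4t^2)$ is analytic on a neighbourhood of $[0,1]$ but its Taylor series at $0$ has radius $1/2$ and diverges at $t=1$. Writing $1/P(tz)=D(t)/N(t)$ with $D(t)=\prod_{s\in S}(1+tz_s)$, the positivity of $N$ on $[0,1]$ that you establish via star-shapedness says nothing a priori about the complex zeros of $N$.

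The hole is repairable, and the repair uses precisely the non-negativity you already invoke at the very end: since $\phi(t)=\sum_n\bigl(\sum_{\vert\mathbf{w}\vert=n}z_{\mathbf{w}}\bigr)t^n$ has non-negative coefficients, Pringsheim's theorem forces the singularity of $D/N$ closest to the origin to lie on the positive real axis; since $N>0$ on $[0,1]$, that singularity has modulus strictly greater than $1$, and the rest of your argument then closes. The paper sidesteps complex analysis entirely: it writes $W(z)=(1-Q(z))^{-1}\prod_{s\in S}(1+z_s)$ for an explicit polynomial $Q$ with $Q(0)=0$, verifies by computing the partial derivatives at $0$ that all coefficients of $Q$ are non-negative, observes that $0\le Q(z)<1$ for $z\in\Omega$, and then expands the geometric series $\sum_{m\ge 0}Q(z)^m$ and rearranges, which is legitimate because every term is non-negative. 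Either route works, but as written your proof is missing the ingredient that rules out small complex poles.
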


\begin{proof}
It is clear that the multivariate function $z\mapsto\sum_{\mathbf{w}\in W}z_{\mathbf w}$ converges absolutely on a domain $\mathcal{U}$ around $0$ and that it defines a holomorphic function there. Put $\mathcal{A}_{m}:=\left\{ \mathbf{w}\in W\mid \mathbf{w}\text{ starts in }\mathbb{Z}_{2}^{k_{m}}\right\}$ where we assume by convention that $e\in\mathcal{A}_{m}$. For every $z\in\mathcal{U}$ and $m=1,\ldots,l$ we have
\begin{eqnarray} \nonumber
W\left(z\right)&=&1+\sum_{\mathbf{w}\in\mathcal{A}_{m}\setminus\left\{ e\right\} }z_{\mathbf w}+\sum_{\mathbf{w}\in\mathcal{A}_{m}^{c}}z_{\mathbf w}\\
\nonumber
&=& 1+\sum_{\mathbf{v}\in\mathbb{Z}_{2}^{k_{m}}\setminus\left\{ e\right\} }z_{\mathbf v}+\sum_{\mathbf{v}\in\mathbb{Z}_{2}^{k_{m}}\setminus\left\{ e\right\} }z_{\mathbf v}\sum_{\mathbf{w}\in\mathcal{A}_{m}^{c}}z_{\mathbf w}+\sum_{\mathbf{w}\in\mathcal{A}_{m}^{c}}z_{\mathbf w}\\
\nonumber
&=& \left(\sum_{\mathbf{v}\in\mathbb{Z}_{2}^{k_{m}}}z_{\mathbf v}\right)\left(1+\sum_{\mathbf{w}\in\mathcal{A}_{m}^{c}}z_{\mathbf w}\right)\\
\nonumber
&=& \left(1+\sum_{\mathbf{w}\in\mathcal{A}_{m}^{c}}z_{\mathbf w}\right)\prod_{i=1}^{k_{m}}\left(1+z_{s_{i}^{\left(m\right)}}\right)\text{.}
\end{eqnarray}
We get, by using this in the third line of the next equalities, that
\begin{eqnarray}
\nonumber
W\left(z\right)&=&1+\sum_{m=1}^{l}\sum_{\mathbf{w}\in\mathcal{A}_{m}\setminus\left\{ e\right\} }z_{\mathbf w}\\
\nonumber
&=& 1+\sum_{m=1}^{l}\left(W\left(z\right)-1-\sum_{\mathbf{w}\in\mathcal{A}_{m}^{c}}z_{\mathbf w}\right)\\
\nonumber
&=& 1+\sum_{m=1}^{l}\left(1-\prod_{i=1}^{k_{m}}\left(1+z_{s_{i}^{\left(m\right)}}\right)^{-1}\right)W\left(z\right)
\end{eqnarray}
and hence
\begin{eqnarray}\label{Eqn=WTaylor}
W\left(z\right)=\left(\sum_{m=1}^{l}\prod_{i=1}^{k_{m}}\left(1+z_{s_{i}^{\left(m\right)}}\right)^{-1}-\left(l-1\right)\right)^{-1}\text{.}
\end{eqnarray}
This implies that the growth series $W\left(z\right)$ of $W$ is equal to the Taylor expansion of \eqref{multivariate} in $0$. From \eqref{Eqn=WTaylor} we also find that \begin{equation}\label{Eqn=QDec}
W\left(z\right)=\left(1-Q\left(z\right)\right)^{-1}\prod_{s\in S}\left(1+z_{s}\right)
\end{equation}
 with the polynomial
\begin{eqnarray} \nonumber
Q\left(z\right):=1+\left(l-1\right)\prod_{s\in S}\left(1+z_{s}\right)-\sum_{m=1}^{l}\prod_{n\neq m}\prod_{j=1}^{k_{n}}\left(1+z_{s_{j}^{\left(n\right)}}\right)\text{.}
\end{eqnarray}
One has $Q\left(0\right)=0$. With $D^\alpha Q$ denoting the higher order partial derivative of $Q$ with respect to $0 \neq \alpha \in \left\{0,1\right\}^S$ we have
\begin{eqnarray} \nonumber
\left.D^{\alpha}Q\right|_{z=0}&=&\left[\left(l-1\right)\prod_{s\in S\text{: }\alpha_{s}=0}\left(1+z_{s}\right)-\sum_{m \in J} \prod_{n \neq m} \prod_{j \in K_n}\left(1+z_{s_{j}^{\left(n\right)}}\right)\right]_{z=0}\\
\nonumber
&=& \left(l-1\right)-\sum_{m \in J}1\\
\nonumber
&\geq& 0 \text{,}
\end{eqnarray}
where $J:=\left\{m \mid 1\leq m \leq l \text{ with } \alpha_s=0 \text{ for all } s\in S_m \right\}$ and
\[
K_n :=\left\{ j \mid 1 \leq j \leq k_m \text{ with } \alpha_{s_j^{(n)}}=0 \right\}\text{.}
\]
This implies that $Q$ has only positive coefficients. Hence for $z \in \Omega$ we have $Q(z) \geq 0$.  Moreover, for $z\in\Omega$ we have $W(z) > 0$ and so by \eqref{Eqn=QDec} we have $0\leq Q\left(z\right)<1$.

Further, for any $z\in \mathbb{C}^{(W,S)}$ with $0\leq Q\left(z\right)<1$ we can expand and rearrange the terms in the series $\sum_{m=0}^{\infty}\left(Q\left(z\right)\right)^{m}$ to get the (converging) Taylor series of $\left(1-Q\left(z\right)\right)^{-1}$. The same is true for the product $\left(1-Q\left(z\right)\right)^{-1}\prod_{s\in S}\left(1+z_{s}\right)$. Combining this with the previous paragraph we get that the Taylor series of the function in \eqref{multivariate} (which is the growth series $W\left(z\right)$) converges on $\Omega$.
\end{proof}

Taking into account \cite[Corollary 4.10]{Dykema}, Proposition \ref{radius} implies the following.

\begin{theorem} \label{free product}
Let $\left(W,S\right)$ be a Coxeter system of the form $W=\mathbb{Z}_{2}^{k_{1}}\ast \cdots \ast\mathbb{Z}_{2}^{k_{l}}$ where $l,k_{1}\geq2$ and let $q \in \mathbb{R}_{>0}^{(W,S)}$. Then the following statements are equivalent.

\begin{enumerate}
\item $C_{r,q}^{\ast}\left(W\right)$ is simple and has unique tracial state;
\item $q\notin\overline{\mathcal{R}'}$, where $\mathcal{R}'$ is defined in \eqref{generalized radius}.
\end{enumerate}
\end{theorem}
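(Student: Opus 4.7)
My plan is to prove the two implications separately, in both cases exploiting the reduced free product decomposition \eqref{decomposition} of $(C_{r,q}^\ast(W),\tau_q)$ as $\ast_{m=1}^l (C(\mathbb{Z}_2^{k_m}),\mu_m)$.

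For $(1)\Rightarrow(2)$ I will argue by contraposition. If $q\in\overline{\mathcal{R}'}$, then Lemma \ref{non-simplicity1} directly provides a character on $C_{r,q}^\ast(W)$; its kernel is a non-trivial proper ideal and the character is a tracial state distinct from $\tau_q$, so $C_{r,q}^\ast(W)$ is neither simple nor has unique tracial state.

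The substantive direction is $(2)\Rightarrow(1)$, which I will obtain by applying Dykema's criterion \cite[Corollary 4.10]{Dykema} to the decomposition \eqref{decomposition}. That result supplies a sufficient condition for simplicity and trace uniqueness of a reduced free product of finite-dimensional abelian C$^\ast$-algebras, phrased in terms of the maximal atoms of the weight measures, of the form
\[
\sum_{m=1}^l\Bigl(1-\max_{\mathbf{w}\in\mathbb{Z}_2^{k_m}}\mu_m(\mathbf{w})\Bigr)>1.
\]
A short computation, writing $\mathbf{w}\in\mathbb{Z}_2^{k_m}$ as $\prod_{i\in I}s_i^{(m)}$ and maximising each factor independently, yields
\[
\max_{\mathbf{w}\in\mathbb{Z}_2^{k_m}}\mu_m(\mathbf{w})=\prod_{i=1}^{k_m}\frac{\max(1,q_{s_i^{(m)}})}{1+q_{s_i^{(m)}}}=\prod_{i=1}^{k_m}\frac{1}{1+\min(q_{s_i^{(m)}},q_{s_i^{(m)}}^{-1})},
\]
so Dykema's inequality rewrites as $\sum_{m=1}^l\prod_{i=1}^{k_m}(1+\min(q_{s_i^{(m)}},q_{s_i^{(m)}}^{-1}))^{-1}<l-1$.

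The final step is to match this analytic inequality with the condition $q\notin\overline{\mathcal{R}'}$. Proposition \ref{radius} together with the explicit formula $W(z)=\bigl(\sum_m\prod_i(1+z_{s_i^{(m)}})^{-1}-(l-1)\bigr)^{-1}$ shows that on $[0,1]^S$ the region $\mathcal{R}$ is cut out by $\sum_m\prod_i(1+z_{s_i^{(m)}})^{-1}>l-1$. Using the coordinate inversions $q_s\mapsto q_s^{-1}$ from Proposition \ref{unitary} to move every coordinate of $q$ into $(0,1]$ while maximising the sum, I will identify $\overline{\mathcal{R}'}$ with the closed locus where $\sum_m\prod_i(1+\min(q_{s_i^{(m)}},q_{s_i^{(m)}}^{-1}))^{-1}\geq l-1$, which is exactly the negation of Dykema's criterion. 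I expect the main obstacle to be verifying the precise formulation of Dykema's corollary and its applicability to our particular measures $\mu_m$; once that bridge is in place, the remaining work is the (straightforward but bookkeeping-intensive) translation between atoms of $\mu_m$ and the growth series formula from Proposition \ref{radius}.
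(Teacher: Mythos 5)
Your proposal is correct and follows essentially the same route as the paper: Lemma \ref{non-simplicity1} for $(1)\Rightarrow(2)$, and for $(2)\Rightarrow(1)$ the free product decomposition \eqref{decomposition} combined with Dykema's Corollary 4.10 and the growth-series computation of Proposition \ref{radius}. The only cosmetic difference is that you absorb the sign flips into the explicit formula $\max_{\mathbf{w}}\mu_m(\mathbf{w})=\prod_i(1+\min(q_{s_i^{(m)}},q_{s_i^{(m)}}^{-1}))^{-1}$, whereas the paper first reduces to $q_s\leq 1$ via Proposition \ref{unitary} and argues by contraposition; note also that only the inclusion $\{\sum_m\prod_i(1+\min(q_{s},q_{s}^{-1}))^{-1}\geq l-1\}\subseteq\overline{\mathcal{R}'}$ (supplied by Proposition \ref{radius}) is actually needed, not the full identification of $\overline{\mathcal{R}'}$ with that locus.
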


\begin{proof}
$\left(1\right)\Rightarrow\left(2\right)$: Assume that $q\in\overline{\mathcal{R}'}$. Then $C_{r,q}^{\ast}\left(W\right)$ is not simple and does not have unique tracial state by Lemma \ref{non-simplicity1}.

$\left(2\right)\Rightarrow\left(1\right)$: Assume that $C_{r,q}^{\ast}\left(W\right)$ is not simple and does not have unique tracial state. Assume further that $0<q_{s} \leq 1$ for every $s\in S$. Using \cite[Proposition 4.10]{Dykema} in combination with \eqref{decomposition} we get that the set
\begin{eqnarray} \nonumber
\left\{ \left(\mathbf{w}_{1},...,\mathbf{w}_{l}\right)\in\prod_{m=1}^{l}\mathbb{Z}_{2}^{k_{m}}\mid l-1\leq \sum_{m=1}^{l}q_{\mathbf{w}_m}\prod_{i=1}^{k_{m}}\left(1+q_{s_{i}^{\left(m\right)}}\right)^{-1}\right\}
\end{eqnarray}
is not empty, so in particular
\begin{eqnarray} \nonumber
l-1\leq\max_{\left(\mathbf{w}_{1},...,\mathbf{w}_{l}\right)\in\prod_{m=1}^{l}\mathbb{Z}_{2}^{k_{m}}}\left(\sum_{m=1}^{l}q_{\mathbf{w}_{m}}\prod_{i=1}^{k_{m}}\left(1+q_{s_{i}^{\left(m\right)}}\right)^{-1}\right)\leq\sum_{m=1}^{l}\prod_{i=1}^{k_{m}}\left(1+q_{s_{i}^{\left(m\right)}}\right)^{-1}.
\end{eqnarray}
Comparing this with Proposition \ref{radius}, we get that $q\in\overline{\mathcal{R}'}$. The general case of $q \in \mathbb{R}_{>0}^{(W,S)}$ follows by an application of Proposition \ref{unitary}.
\end{proof}

\begin{remark}\label{Rmk=Factor2}
Proposition \ref{radius} also implies that the reasoning in \cite[Section 6]{Gar} applies to the multi-parameter Hecke-von Neumann algebra $\mathcal{N}_{q}\left(W\right)$ with $W$ as in Theorem \ref{free product}. In other words, $\mathcal{N}_{q}\left(W\right)$ is a factor if and only if $q\notin\mathcal{R}'$.  {\it Note:} After completion of this paper this result was extended to arbitrary irreducible right-angled Hecke von Neumann algebras with $\vert S \vert \geq 3$  in \cite{RaumSkalski}.
\end{remark}

\begin{remark}
In the proof of Theorem \ref{free product} we only used the simplicity part of \cite[Corollary 4.10]{Dykema}. For $W$ as above the full statement of the corollary also provides a detailed description of the ideal structure of $C_{r,q}^{\ast}\left(W\right)$ for $q\in\overline{\mathcal{R}'}$. Further we conclude by \cite[Corollary 4.10]{Dykema} that for $l \geq 3$ the Hecke C$^\ast$-algebra $C_{r,q}^\ast(W)$ has stable rank $1$ for every $q$.
\end{remark}

In view of Lemma \ref{non-simplicity1}, Proposition \ref{unitary}, Theorem \ref{simple} and Theorem \ref{free product} it is a natural question whether or not the characterisation above holds for general Coxeter systems.

\begin{question}
Let $\left(W,S\right)$ be an irreducible Coxeter system and $q \in \mathbb{R}_{>0}^{(W,S)}$. Is it true that the Hecke C$^{\ast}$-algebra $C_{r,q}^{\ast}\left(W\right)$ is simple and has unique tracial state if and only if $q\notin\overline{\mathcal{R}'}$ where $\mathcal{R}'$ is defined as in \eqref{generalized radius}?
\end{question}

\begin{remark}
Recently a new approach to C$^\ast$-simplicity results was obtained through Furstenberg/Hamana boundaries, see \cite{KK}, \cite{BKKO}, \cite{KB}. In particular, in \cite{KB} the Furstenberg boundary of a general unitary representation of a discrete group was defined and investigated in relation to trace-uniqueness properties. Proposition \ref{isomorphism} shows that Hecke C$^\ast$-algebras of a right-angled Coxeter group are C$^\ast$-algebras generated by such a unitary representation. It would be interesting to exploit this connection. However, in light of the results from \cite{KB}, it is not clear  how manageable the Furstenberg-Hamana boundary is.
\end{remark}

%%%%%%%%%%%%%%%%%%%%%%%%%%%%%%%%%%%%%%%%%%%%%%%%%%%%

\section{Exactness and nuclearity of   Hecke C$^\ast$-algebras} \label{FurtherProperties}

The aim of this last section is to have a look at additional properties of Hecke C$^{\ast}$-algebras such as exactness and nuclearity and give (counter-)examples regarding the extension of the isomorphism $\pi_{q,1}$ in Proposition \ref{isomorphism} to the C$^\ast$-algebraic level.

\begin{theorem}
Let $\left(W,S\right)$ be a Coxeter system and $q=\left(q_{s}\right)_{s\in S}\in\mathbb{R}_{>0}^{(W,S)}$. Then $C_{r,q}^{\ast}\left(W\right)$ is an exact C$^{\ast}$-algebra.
\end{theorem}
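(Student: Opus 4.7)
The plan is to split the proof into two cases and, in each, realise $C_{r,q}^{\ast}(W)$ as a C$^{\ast}$-subalgebra of an already exact C$^{\ast}$-algebra, invoking Kirchberg's theorem that exactness is inherited by C$^{\ast}$-subalgebras.

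\textbf{Right-angled case.} If $(W,S)$ is right-angled with associated graph $\Gamma$, the isomorphism \eqref{Eqn=GraphHeckeIso} identifies $(C_{r,q}^{\ast}(W),\tau_q)$ with the reduced graph product $\ast_{s,\Gamma}(C_{r,q_s}^{\ast}(W_s),\tau_{q_s})$. Each vertex algebra $C_{r,q_s}^{\ast}(W_s)$ is two-dimensional, hence nuclear and \emph{a fortiori} exact, and every $\tau_{q_s}$ is GNS-faithful. The main preservation result of \cite{CaspersFima} states that reduced graph products of exact C$^{\ast}$-algebras with GNS-faithful states are exact, and this disposes of the right-angled case directly.

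\textbf{General case.} For an arbitrary Coxeter system $(W,S)$ I would embed $C_{r,q}^{\ast}(W)$ into a reduced crossed product of the form $B \rtimes_r W$ with $B$ a commutative, nuclear C$^{\ast}$-algebra on which $W$ acts naturally (for instance $B=C(\partial_{\text{Davis}}W)$ built from the Coxeter or Davis complex, or more generally a commutative algebra generated by the range-projections of the defining representation \eqref{Eqn=TOp}). Because every Coxeter group admits Tits's faithful linear representation, $W$ is a countable linear group and hence exact, so the reduced crossed product $B \rtimes_r W$ is exact. The generators $T_s^{(q)}$ are explicitly built from the projection $P_s$ of \eqref{Eqn=ProjectionSpace} together with the left regular unitary $\lambda_s$ via the formula $T_s^{(q)}=\lambda_s+p_s(q)P_s$, so they naturally sit inside $B\rtimes_r W$ once $B$ is chosen to contain the projections $P_s$ and their $W$-translates. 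Kirchberg's stability of exactness under passage to C$^{\ast}$-subalgebras then yields exactness of $C_{r,q}^{\ast}(W)$.

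\textbf{Main obstacle.} The right-angled case is essentially automatic once one invokes the graph-product machinery. The delicate step is the general case, where one must identify a concrete exact ambient algebra and verify that the Hecke generators really do live inside it. The most transparent route is the crossed-product realisation above, reducing the problem to the (known) exactness of Coxeter groups as linear groups; alternatively one may use a groupoid model associated to the $W$-action on the Cayley/Davis complex. Either way, the arithmetic with projections $P_s$ and the relations \eqref{Eqn=Rel1}, \eqref{Eqn=Rel2} must be checked to ensure the embedding is $\ast$-preserving, which is where the actual work lies.
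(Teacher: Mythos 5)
Your proposal is correct and its general case is essentially the paper's own proof: the paper takes the coefficient algebra to be $\ell^\infty(W)$, so that the ambient algebra is the uniform Roe algebra $\ell^\infty(W)\rtimes_r W$ (nuclear because $W$ is exact, which the paper deduces from the Dranishnikov--Januszkiewicz amenable action on a compact space rather than from linearity), and then uses exactly your observation $T_s^{(q)}=T_s^{(1)}+p_s(q)P_s$ with $P_s\in\ell^\infty(W)$ to place $C_{r,q}^{\ast}(W)$ inside it. The separate right-angled graph-product case is redundant, and among your candidate coefficient algebras only $\ell^\infty(W)$ (or the $W$-invariant commutative subalgebra generated by the translates of the $P_s$) makes the containment immediate, whereas for $C(\partial_{\mathrm{Davis}}W)$ one would still have to check that the half-space projections $P_s$ live there.
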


\begin{proof}
By \cite{DJ} (see also \cite{Anantharaman}) the group $W$ acts amenably on a compact space. Therefore, by \cite[Theorem 5.1.7]{BrownOzawa} $W$ is exact. With $P_{s}$ as defined in \eqref{Eqn=ProjectionSpace}, we have $T_{s}^{\left(q\right)}=T_{s}^{\left(1\right)}+p_{s}(q)P_{s}$ for $s\in S$. The Hecke C$^{\ast}$-algebra $C_{r,q}^{\ast}\left(W\right)$ is generated by $\left\{ T_{s}^{\left(q\right)}\right\} _{s\in S}$ and we have $P_{s}\in \ell^{\infty}(W)\subseteq{\mathcal B}\left(\ell^2(W)\right)$. Hence $C_{r,q}^{\ast}\left(W\right)$ is contained in the uniform Roe algebra of $W$ which is nuclear by \cite[Theorem 5.1.6]{BrownOzawa}. That implies the exactness of $C_{r,q}^{\ast}\left(W\right)$.
\end{proof}

The following theorem generalizes \cite[Theorem 3.6]{CaspersAPDE}.

\begin{theorem} \label{equivalence}
Let $\left(W,S\right)$ be an irreducible Coxeter system and $q \in \mathbb{R}_{>0}^{(W,S)}$. Then the following statements are equivalent:
\begin{enumerate}[label=(\arabic*)]
\item $\left(W,S\right)$ is of spherical or affine type;
\item $C_{r,q}^{\ast}\left(W\right)$ is nuclear for all $q$;
\item $C_{r,q}^{\ast}\left(W\right)$ is nuclear for some $q$;
\item ${\mathcal N}_{q}\left(W\right)$ is injective for all $q$;
\item ${\mathcal N}_{q}\left(W\right)$ is injective for some $q$.
\end{enumerate}

\end{theorem}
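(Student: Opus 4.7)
The theorem is established via the cycle $(1) \Rightarrow (2) \Rightarrow (4) \Rightarrow (5) \Rightarrow (1)$, together with the trivial arrows $(2) \Rightarrow (3)$, $(3) \Rightarrow (5)$, and $(4) \Rightarrow (5)$. The implications between the nuclearity and injectivity statements are standard: nuclearity of a C$^\ast$-algebra implies injectivity of its double commutant in any faithful representation, and ``for all $q$'' trivially entails ``for some $q$''. Hence only $(1) \Rightarrow (2)$ and $(5) \Rightarrow (1)$ genuinely require argument. For $(1) \Rightarrow (2)$: in the spherical case $W$ is finite, so $C^\ast_{r,q}(W) \subseteq \mathcal{B}(\ell^2(W))$ is finite-dimensional and a fortiori nuclear; in the affine case the result of Matsumoto \cite{Matsumoto} (as already invoked in the discussion preceding Corollary \ref{non-simplicity2}) asserts that $C^\ast_{r,q}(W)$ is of type I, which is stronger than nuclearity.

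The heart of the proof is the contrapositive of $(5) \Rightarrow (1)$: assuming $(W,S)$ is irreducible non-affine, I must show $\mathcal{N}_q(W)$ is non-injective for every $q$. The first step is a descent principle for special subgroups: for any $S_0 \subseteq S$, the orthogonal projection $P: \ell^2(W) \to \ell^2(W_{S_0})$ commutes with each Hecke generator $T_s^{(q)}$ for $s \in S_0$ and maps $T_s^{(q)} \delta_e$ to zero for $s \notin S_0$, so compression by $P$ produces a faithful normal conditional expectation $\mathcal{N}_q(W) \to \mathcal{N}_q(W_{S_0})$ (cf.\ \cite[Lemma 19.2.2]{Da}). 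Consequently, injectivity of $\mathcal{N}_q(W)$ forces injectivity of $\mathcal{N}_q(W_{S_0})$ for every special subsystem.

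It thus suffices to exhibit non-injectivity of $\mathcal{N}_q(W_{S_0})$ for a carefully chosen subsystem. By the classification of low-rank Coxeter systems, any irreducible non-affine $(W,S)$ contains an irreducible non-affine rank-$3$ special subsystem $(W_{S_0}, S_0)$. If some pairwise exponent among the three generators equals $\infty$, then $(W_{S_0}, S_0)$ admits $\mathbb{Z}_2 \ast \mathbb{Z}_2 \ast \mathbb{Z}_2$ as a right-angled non-affine special subsystem; for this group Theorem \ref{free product} together with Dykema's analysis \cite{Dykema} shows that the Hecke von Neumann algebra has a non-hyperfinite free-product-factor summand and is therefore non-injective for every $q$. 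A further application of the descent principle completes this case.

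The remaining case, and the principal obstacle, is when $W_{S_0}$ is a \emph{hyperbolic triangle group}, i.e.\ all three pairwise exponents are finite but satisfy $\tfrac{1}{m_{s,t}} + \tfrac{1}{m_{t,u}} + \tfrac{1}{m_{s,u}} < 1$. Here the Hecke algebra is not a graph product, Proposition \ref{isomorphism} is unavailable, and the Khintchine/Haagerup machinery of the preceding sections does not directly apply. My plan is to combine the non-amenability of $W_{S_0}$ (which forces non-injectivity of $\mathcal{L}(W_{S_0}) = \mathcal{N}_1(W_{S_0})$) with a continuity or spectral-gap argument: were $\mathcal{N}_q(W_{S_0})$ hyperfinite for some $q$, one would extract a net of finite-rank approximants of the Hecke action on $\ell^2(W_{S_0})$ whose limit as $q \to 1$ would yield a F\o lner-type sequence for $W_{S_0}$, contradicting non-amenability. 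Making this comparison rigorous across distinct values of the deformation parameter is the central analytic difficulty, and it is the step that requires genuinely new input beyond what was available in the right-angled treatment of \cite{CaspersAPDE}.
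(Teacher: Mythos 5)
Your cycle of implications and your treatment of $(1)\Rightarrow(2)$ match the paper (modulo a small slip: spherical type means \emph{locally} finite, so for infinite $S$ the algebra is an inductive limit of finite-dimensional C$^\ast$-algebras rather than finite-dimensional). The decisive problem is $(5)\Rightarrow(1)$. You reduce to a rank-$3$ special subsystem and then split into cases, but you explicitly leave the hyperbolic triangle group case unresolved, offering only a heuristic ``continuity/spectral-gap'' sketch. That sketch does not work as stated: injectivity of $\mathcal{N}_q(W_{S_0})$ is not known to vary continuously in $q$, and there is no mechanism for turning hyperfinite approximants of the Hecke action at some fixed $q\neq 1$ into a F\o lner sequence for the group as $q\to 1$ --- you would be comparing completely positive approximations of \emph{different} von Neumann algebras on the same Hilbert space with no uniform control. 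Since the triangle-group case is exactly where the graph-product machinery fails, this is not a technicality but the heart of the implication, and your proof is incomplete there. (Your intermediate reduction also tacitly assumes every irreducible non-affine system contains an irreducible non-affine rank-$3$ special subsystem, which itself would need justification.)

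The paper avoids the case analysis entirely. By \cite[Proposition 17.2.1]{Da} a non-affine irreducible $W$ contains a free group on two generators $\mathbf{a}_1,\mathbf{a}_2$ (arbitrary elements of $W$, not elements of $S$). The von Neumann subalgebra $\mathcal{M}\subseteq\mathcal{N}_q(W)$ generated by $T_{\mathbf{a}_1}^{(q)}$ and $T_{\mathbf{a}_2}^{(q)}$ decomposes as a tracial free product $(\mathcal{M}_1,\tau_{q,1})\ast(\mathcal{M}_2,\tau_{q,2})$ of two infinite-dimensional algebras, which is non-injective by Ueda's theorem \cite[Theorem 4.1 and Remark 4.2 (5)]{Ueda}; a trace-preserving normal conditional expectation $\mathcal{N}_q(W)\twoheadrightarrow\mathcal{M}$ (cf.\ \cite[Corollary 3.3]{CaspersAPDE}) then forces $\mathcal{N}_q(W)$ to be non-injective. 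This works uniformly in $q$ and for all non-affine irreducible systems, including the triangle groups on which your argument breaks down.
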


\begin{proof}
$\left(1\right)\Rightarrow\left(2\right)$: If $\left(W,S\right)$ is of spherical type, then $C_{r,q}^{\ast}\left(W\right)$ is  finite-dimensional or an inductive limit over finite-dimensional C$^{\ast}$-algebras, and in either case nuclear. So let us assume that $\left(W,S\right)$ is of affine type. It is well-known that affine Coxeter systems correspond to affine Weyl groups, see for example \cite[Chapter VI]{B}. From the discussion in \cite[\S4]{Matsumoto} it follows that the Hecke C$^{\ast}$-algebra $C_{r,q}^{\ast}\left(W\right)$ must be of type I. In particular it is nuclear,  see \cite{TakesakiTo}.

$\left(2\right)\Rightarrow\left(3\right)$: Clear.

$\left(3\right)\Rightarrow\left(5\right)$: If $C_{r,q}^{\ast}\left(W\right)$ is nuclear for some $q$, then the bicommutant ${\mathcal N}_{q}\left(W\right)=\left(C_{r,q}^{\ast}\left(W\right)\right)''$ must be injective \cite[Exercise 3.6.4, Corollary 3.8.6 and Theorem 9.3.3]{BrownOzawa}.

$\left(5\right)\Rightarrow\left(1\right):$ Let $(W,S)$ be an irreducible Coxeter system of non-affine type. By \cite[Proposition 17.2.1]{Da}, $W$ contains the free group on two generators. Denote the corresponding generators by $\mathbf{a}_{1}$, $\mathbf{a}_{2}$ and let ${\mathcal M}$ be the von Neumann subalgebra of ${\mathcal N}_{q}\left(W\right)$ generated by $T_{\mathbf{a}_{1}}^{\left(q\right)}$ and $T_{\mathbf{a}_{2}}^{\left(q\right)}$. Let further ${\mathcal M}_{1}$ be the von Neumann algebra generated by $T_{\mathbf{a}_{1}}^{\left(q\right)}$ and ${\mathcal M}_{2}$ the one generated by $T_{\mathbf{a}_{2}}^{\left(q\right)}$. Then ${\mathcal M}$ is isomorphic to the free product $\left({\mathcal M}_{1},\tau_{q,1}\right)\ast\left({\mathcal M}_{2},\tau_{q,2}\right)$ over the canonical traces. The dimensions of ${\mathcal M}_{1}$, ${\mathcal M}_{2}$ are infinite, so ${\mathcal M}$ is non-injective by \cite[Theorem 4.1 and Remark 4.2 (5)]{Ueda}. But there exists a trace-preserving normal conditional expectation $E\text{: }{\mathcal N}_{q}\left(W\right)\twoheadrightarrow{\mathcal M}$ (see for example \cite[Lemma 1.5.11]{BrownOzawa} or \cite[Corollary 3.3]{CaspersAPDE}), so ${\mathcal N}_{q}\left(W\right)$ must be non-injective as well.

$ \left(2\right) \Leftrightarrow\left(4\right):$ Clear from the arguments above.
\end{proof}

We have seen that for spherical type Coxeter systems the corresponding Hecke C$^{\ast}$-algebras are independent of the choice of $q$. Further, if the Coxeter system $\left(W,S\right)$ is right-angled and amenable, then the map $\pi_{q,1}$ from Proposition \ref{isomorphism} extends to a surjection $C_{r}^{\ast}\left(W\right)\twoheadrightarrow C_{r,q}^{\ast}\left(W\right)$ for every $q$. It is a natural question whether or not this map is an isomorphism as well. This is not always the case as the following example illustrates.

\begin{example} \label{example1}
Let $\left(W,S\right)$ be the Coxeter system generated by two elements $s$, $t$ with $m_{s,s}=m_{t,t}=2$, $m_{s,t}=\infty$. This is the infinite dihedral group which is the only irreducible right-angled Coxeter group of affine type. Let $C_{r,0}^{\ast}\left(W\right)\subseteq{\mathcal B}\left(\ell^2(W)\right)$ be the unital C$^{\ast}$-algebra generated by $P_{s}$ and $P_{t}$ defined in \eqref{Eqn=ProjectionSpace}. Assume that $\pi_{q,1}$ of Proposition \ref{isomorphism} extends to a $\ast$-isomorphism for every $q\in\mathbb{R}_{>0}^{(W,S)}$. Then the map $P_{u}\mapsto(1-T_{u}^{(1)})/2$, $u\in S$ extends to a $\ast$-isomorphism $\pi_{0,1}\text{: }C_{r,0}^{\ast}\left(W\right) \simeq C_{r}^{\ast}\left(W\right)$ as well. Indeed, using Proposition \ref{isomorphism} one checks that as $q \downarrow 0$ we have
\begin{equation}
\nonumber
\pi_{q,1}^{-1}\left(x\right)\rightarrow\sum_{\mathbf{w}\in W}x\left(\mathbf{w}\right)\pi_{0,1}^{-1}\left(T_{\mathbf w}^{\left(1\right)}\right)=\pi_{0,1}^{-1}\left(x\right)
\end{equation}
in ${\mathcal B}\left(\ell^2(W)\right)$ for every $x:=\sum_{\mathbf{w}\in W}x\left(\mathbf{w}\right)T_{\mathbf w}^{\left(1\right)}\in\mathbb{C}_{1}\left[W\right]$. Hence,
\begin{equation}
\nonumber
\Vert \pi_{0,1}^{-1}\left(x\right)\Vert =\lim_{q\downarrow0} \Vert \pi_{q,1}^{-1}\left(x\right)\Vert =\Vert x\Vert
\end{equation}
as $\pi_{q,1}$ is isometric. Since the C$^{\ast}$-algebra $C_{r,0}^{\ast}\left(W\right)$ is commutative, we have reached a contradiction.
\end{example}

The example illustrates that $\pi_{q,1}$ in general does not extend to an isomorphism of (reduced) C$^{\ast}$-algebras. In the non-affine case it gets even worse. Here $\pi_{q,1}$ and $\pi_{q,1}^{-1}$ never extend to the reduced C$^{\ast}$-algebra level for arbitrary choice of $q$.

\begin{example}
Let $\left(W,S\right)$ be an irreducible right-angled Coxeter system of non-affine type. By Theorem \ref{simple} the reduced group C$^{\ast}$-algebra $C_{r}^{\ast}\left(W\right)$ has unique tracial state. The map $\pi_{q,1}$ is not trace preserving with respect to the canonical trace. Hence it does not extend to the reduced C$^{*}$-algebraic level. To see that $\pi_{q,1}^{-1}$ does in general not extend, one can use the same argument as in Example \ref{example1} or the trace-uniqueness in Theorem \ref{simple}.
\end{example}

%%%%%%%%%%%%%%%%%%%%%%%%%%%%%%%%%%%%%%%%%%%%%%%%%%%%%%


\begin{thebibliography}{00}

\bibitem{Anantharaman}
   C. Anantharaman-Delaroche, \emph{Amenability and exactness for groups, group actions and operator
algebras}, \'Ecole th\'ematique. Erwin Schr\"odinger Institute, Mars 2007, 2007.

\bibitem{HaagerupProperty} Y. Antol\'{i}n, D. Dreesen, \emph{The Haagerup property is stable under graph products}, arXiv: 1305.6748.

\bibitem{Alternatives} Y. Antol\'{i}n, A. Minasyan, \emph{Tits alternatives for graph products}, J. Reine Angew. Math.  704 (2015), 55--83.


\bibitem{KB}
  A. Bearden, M. Kalantar,
  \emph{Topological boundaries of unitary representations},
  arXiv:1901.10937.

\bibitem{Bernstein} J. Bernstein, \emph{Le ``centre'' de Bernstein} in \emph{Representations of reductive groups over a local field}, Travaux en Cours, Hermann (1984), 1--32.

\bibitem{BlecherPaulsen}
   D. Blecher,  V. Paulsen,
   \emph{Tensor products of operator spaces},
   J. Funct. Anal. 99 (1991), no. 2, 262--292.




\bibitem{B} N. Bourbaki,  \emph{Lie Groups and Lie Algebras}, Chapters 4-6, Elements of Mathematics, Springer Berlin 2002.

\bibitem{BozejkoSpeicher}
  M. Bo\.zejko, R. Speicher,
  \emph{Completely positive maps on Coxeter groups, deformed commutation relations, and operator spaces},
  Math. Ann. {300} (1994), no. 1, 97--120.


\bibitem{BKKO}
  E. Breuillard, M. Kalantar, M. Kennedy, N. Ozawa,
  \emph{C$^\ast$-simplicity and the unique trace property for discrete groups},
  Publ. Math. Inst. Hautes \'Etudes Sci. 126 (2017), 35--71.


\bibitem{BrownOzawa} N. Brown, N.  Ozawa,  \emph{C$^\ast$-algebras and finite-dimensional approximations}, Graduate Studies in Mathematics, 88. American Mathematical Society, Providence, RI, 2008. xvi+509 pp.

\bibitem{Carter} R. W. Carter, \emph{Finite groups of Lie type: Conjugacy classes and complex characters}, Pure Appl. Math. 44 (1985).

\bibitem{CaspersAPDE}
 M. Caspers,
 \emph{Absence of Cartan subalgebras for right-angled Hecke von Neumann algebras},
   Anal. PDE 13 (2020), no. 1, 1--28.
  
\bibitem{CaspersIDAQP}
  M. Caspers,
  \emph{Connes embeddability of graph products},
  Infin. Dimens. Anal. Quantum Probab. Relat. Top. {\bf 19} (2016), no. 1, 1650004, 13 pp.

\bibitem{CaspersFima} M. Caspers, P. Fima, \emph{Graph products of operator algebras}, J. Noncommut. Geom.  11  (2017),  no. 1, 367--411.

\bibitem{CSW} M. Caspers, A. Skalski, M. Wasilewski, \emph{On MASAs in $q$-deformed von Neumann algebras}, Pacific J. Math. 302 (2019), no. 1, 1--21.

\bibitem{Ching} W. M. Ching, \emph{Free products of von Neumann algebras}, Trans. Amer. Math. Soc. 178 (1973), 147--163.

\bibitem{Ordering} I. Chiswell, \emph{Ordering graph products of groups}, Internat. J. Algebra Comput. 22 (2012), no. 4, 1250037, 14 pp.


\bibitem{Sofic} L. Ciobanu, D. Holt, S. Rees, \emph{Sofic groups: graph products and graphs of groups}, Pacific J. Math. 271 (2014), no. 1, 53--64.

\bibitem{RapidDecay} L. Ciobanu, D. Holt, S. Rees, \emph{Rapid decay is preserved by graph products}, J. Topol. Anal. 5 (2013), no. 2, 225--237.

%\bibitem{CFY}  K. de Commer, A. Freslon, M. Yamashita,  \emph{CCAP for universal discrete quantum groups.
%With an appendix by Stefaan Vaes.}, Comm. Math. Phys. 331 (2014), no. 2, 677--701.



\bibitem{Cornulier} Y. De Cornulier, \emph{Semisimple Zariski closure of Coxeter groups}, arXiv:1211.5635 (2012).

\bibitem{CrispLaca} J. Crisp and M. Laca, \emph{On the Toeplitz algebras of right-angled and finite-type Artin groups}, J. Aust. Math. Soc. 72 (2002), no. 2, 223–245.

\bibitem{Da} M. W. Davis, \emph{The Geometry and Topology of Coxeter groups}, London Mathematical Society Monographs, Vol. 32. Princeton and Oxford, 2008.

\bibitem{Dymara2}
   M. W. Davis, J. Dymara, T. Januszkiewicz, B. Okun,
   \emph{Weighted L$^2$-cohomology of Coxeter groups},
   Geom. Topol. 11 (2007), 47--138.


\bibitem{DeLaHarpe} P. de La Harpe, \emph{On simplicity of reduced C*‐algebras of groups}, Bull. London Math. Soc. 39 (2007), no. 1, 1--26.


\bibitem{DJ}  A. N. Dranishnikov, T. Januszkiewicz, \emph{Every Coxeter group acts amenably on a compact space}, Topology Proc. 24 (1999), Spring, 135--141.



\bibitem{Dymara1}
   J. Dymara, \emph{Thin buildings}, Geom. Topol. 10 (2006), 667--694.


\bibitem{DykemaInter}
   K. Dykema,
   \emph{Free products of hyperfinite von Neumann algebras and free dimension},
    Duke Math. J. 69 (1993), no. 1, 97--119.

\bibitem{Dykema} K. Dykema, \emph{Simplicity and the stable rank of some free product C$^\ast$-algebras}, Trans. Amer. Math. Soc. 351 (1999), no. 1, 1--40.




\bibitem{EffrosRuan}
   E. Effros, Z.-J. Ruan,
   \emph{Operator spaces},
   London Mathematical Society Monographs. New Series, 23. The Clarendon Press, Oxford University Press, New York, 2000.



\bibitem{Fe} G. Fendler, \emph{Weak amenability of Coxeter groups}, preprint arXiv: math/0203052

\bibitem{Gar} L. Garncarek, \emph{Factoriality of Hecke-von Neumann algebras of right-angled Coxeter groups}, J. Funct. Anal. 270 (2016), no. 3, 1202--1219.

\bibitem{Green}
    E.R. Green,
    \emph{Graph products},
    PhD thesis, University of Leeds, 1990, http://ethesis.whiterose.ac.uk/236.


\bibitem{HaagerupExample}
  U.  Haagerup,
  \emph{An example of a nonnuclear C$^\ast$-algebra, which has the metric approximation property},
  Invent. Math. 50 (1978/79), no. 3, 279--293.

\bibitem{Linear} T. Hsu, D. Wise, \emph{On linear and residual properties of graph products}, Michigan Math. J. 46 (1999), no. 2, 251--259

\bibitem{IwahoriMatsumoto} N. Iwahori, H. Matsumoto, \emph{On some Bruhat decomposition and the structure of Hecke rings of $\mathfrak{p}$-adic Chevalley groups}, Inst. Hautes \'{E}tudes Sci. Publ. Math. (1965), no. 25, 5--48

\bibitem{Lusztig} G. Lusztig, \emph{Affine Hecke algebras and their graded version}, J. Amer. Math. Soc. 2.3 (1989), 599--635.

\bibitem{KK}
  M. Kalantar, M. Kennedy,
  \emph{Boundaries of reduced C$^\ast$-algebras of discrete groups},
  J. Reine Angew. Math. 727 (2017), 247--267.

\bibitem{KL} D. Kazhdan, G. Lusztig, \emph{Proof of the Deligne-Langlands conjecture for Hecke algebras}, Invent. Math. 87 (1987), no. 1, 153--215

\bibitem{Matsumoto} H. Matsumoto, \emph{Analyse harmonique dans les syst\`{e}mes de Tits bornologiques de type affine}, Lecture Notes in Math. 590, Springer-Verlag, Berlin 1977.

\bibitem{Murphy}
  G. Murphy,
  \emph{C$^\ast$-algebras and operator theory},
   Academic Press, Inc., Boston, MA, 1990. x+286 pp.

\bibitem{Nou}
  A. Nou,
  \emph{Non injectivity of the $q$-deformed von Neumann algebra},
    Math. Ann. {\bf 330} (2004), no. 1, 17–-38.

\bibitem{ScottOkun}
  B. Okun, R. Scott,
  \emph{$L^2$-homology and reciprocity for right-angled Coxeter groups},
  Fund. Math. {\bf 214} (2011), no. 1, 27--56.


\bibitem{Pisier} G. Pisier,
\emph{ Introduction to operator space theory.} London Mathematical Society Lecture Note Series, 294. Cambridge University Press, Cambridge, 2003. viii+478 pp.

\bibitem{Algorithm} S. Hermiller, J. Meier, \emph{Algorithms and geometry for graph products of groups}, J. Algebra 171 (1995), no. 1, 230--257


\bibitem{RaumSkalski}  
      S. Raum, A. Skalski, 
      \emph{Factorial multiparameter Hecke von Neumann algebras and representations of groups acting on right-angled buildings},  
       arXiv:2008.00919.    


\bibitem{TakesakiTo}
   M. Takesaki,
   \emph{On the cross-norm of the direct product of C$^\ast$-algebras},
   Tohoku Math. J. (2) 16 (1964), 111--122.


\bibitem{Powers} R. T. Powers, \emph{Simplicity of the C$^\ast$-algebra associated with the free group on two generators}, Duke Math. J.  42  (1975), 151--156.

\bibitem{Pukanszky} L. Puk\'{a}nszky, \emph{Some examples of factors}, Publ. Math. Debrecen 4 (1956), 135--156.


\bibitem{Radulescu}
  F. Radulescu,
  \emph{Random matrices, amalgamated free products and subfactors of the von Neumann algebra of a free group, of noninteger index},
   Invent. Math. 115 (1994), no. 2, 347--389.


\bibitem{RicardXu} E. Ricard, Q. Xu,   Khintchine type inequalities for reduced free products and applications. J. Reine Angew. Math. 599 (2006), 27--59.

\bibitem{Takesaki}
   M. Takesaki,
   \emph{Theory of operator algebras. I, II and III.}  Springer-Verlag, Berlin.

\bibitem{Ueda} Y. Ueda, \emph{Factoriality, type classification and fullness for free product von Neumann algebras}, Adv. Math. 228 (2011), no. 5, 2647--2671.


\end{thebibliography}
\end{document}